\definecolor{green}{rgb}{0,0.5,0} 
\newcommand{\set}[1]{\{#1\}}
\newcommand{\tr}{\textrm{tr}}
\newcommand{\ud}{\mathrm{d}}
\newcommand{\rd}{\partial}
\newcommand{\bb}{\Big}
\newcommand{\eps}{\epsilon}
\newcommand{\lmb}{\lambda}
\newcommand{\Lmb}{\Lambda}
\newcommand{\sgm}{\sigma}
\newcommand{\Tht}{\Theta}
\newcommand{\omg}{\omega}
\newcommand{\bbD}{\mathbb D}
\newcommand{\bbH}{\mathbb H}
\newcommand{\bbR}{\mathbb R}
\newcommand{\bbS}{\mathbb S}
\newcommand{\calD}{\mathcal D}
\newcommand{\calL}{\mathcal L}
\newcommand{\arctanh}{\mathrm{arctanh} \, }
\newcommand{\gapE}{\mu^{2}_{\lmb,k}}
\newcommand{\rnL}{\widetilde{\calL}}
\newcommand{\ttmatrix}[4]{\left( \begin{array}{cc} #1 & #2 \\ #3 & #4 \end{array} \right)}
\definecolor{light-gray1}{gray}{0.90}
\definecolor{light-gray2}{gray}{0.80}
\definecolor{deepgreen}{cmyk}{1,0,1,0.5}
\newcommand{\E}{\mathcal{E}}
\newcommand{\HH}{\mathcal{H}}
\newcommand{\LL}{\mathcal{L}}
\newcommand{\NN}{\mathcal{N}}
\newcommand{\M}{\mathcal{M}}
\newcommand{\EE}{\mathscr{E}}
\newcommand{\C}{\mathbb{C}}
\newcommand{\Hp}{\mathbb{H}}
\newcommand{\N}{\mathbb{N}}
\newcommand{\R}{\mathbb{R}}
\newcommand{\Sp}{\mathbb{S}}
\newcommand{\Z}{\mathbb{Z}}
\newcommand{\D}{\mathbb{D}}
\newcommand{\h}{\mathbf{h}}
\newcommand{\al}{\alpha}
\newcommand{\be}{\beta}
\newcommand{\de}{\delta}
\newcommand{\e}{\varepsilon}
\newcommand{\fy}{\varphi}
\newcommand{\om}{\omega}
\newcommand{\la}{\lambda}
\newcommand{\te}{\theta}
\newcommand{\s}{\sigma}
\newcommand{\z}{\zeta}
\newcommand{\La}{\Lambda}
\newcommand{\Te}{\Theta}
\newcommand{\p}{\partial}
\newcommand{\loc}{\operatorname{loc}}
\newcommand{\Rmnum}[1]{\expandafter\@slowromancap\romannumeral #1@}
\newcommand{\ti}{\widetilde}
\newcommand{\ba}{\overline}
\newcommand{\abs}[1]{\left\lvert{#1}\right\rvert}
\newcommand{\ali}[1]{\begin{align}\begin{split} #1 \end{split}\end{align}}
\newcommand{\ant}[1]{\begin{align*}\begin{split} #1 \end{split}\end{align*}}
\newcommand{\EQ}[1]{\begin{equation}\begin{split} #1 \end{split}\end{equation}}
\newcommand{\Del}[1]{}
\numberwithin{equation}{section}
\newtheorem{thm}{Theorem}[section]
\newtheorem{lem}[thm]{Lemma}
\newtheorem{prop}[thm]{Proposition}
\newtheorem{claim}[thm]{Claim}
\theoremstyle{remark}
\newtheorem{rem}{Remark}
\newtheorem{defn}{Definition}
\newcommand{\mand}{{\ \ \text{and} \ \  }}
\newcommand{\mif}{{\ \ \text{if} \ \ }}
\newcommand{\mfor}{{\ \ \text{for} \ \ }}
\newcommand{\mas}{{\ \ \text{as} \ \ }}
\newcommand{\euc}{\textrm{euc}}
\newcommand{\Lline}{\LL_{V_{\la,k}}}
\newcommand{\Qu}{Q_{\la,k}}
\newcommand{\Lv}{\ti{\LL}_{V_{\la,k}}}
\newcommand{\dvol}{\operatorname{dVol}} 
\newcommand{\VV}{\mathcal{V}}
\newcommand{\WW}{\mathcal{W}}
\newcommand{\atan}{\mathrm{arctan}}
\newcommand{\YM}{\mathrm{YM}}
\newcommand{\rrho}{\rho}
\newcommand{\ssigma}{\sigma}
\newcommand{\ttau}{\tau}
\begin{document}

\title[Gap eigenvalues for geometric equations]{Gap eigenvalues and asymptotic dynamics of geometric wave equations on hyperbolic space}

\author{Andrew Lawrie}
\author{Sung-Jin Oh}
\author{Sohrab Shahshahani}

\begin{abstract} 
In this paper we study  $k$-equivariant wave maps from the hyperbolic plane into the $2$-sphere as well as the energy critical equivariant $SU(2)$ Yang-Mills problem on $4$-dimensional hyperbolic space. The latter problem bears many similarities to a $2$-equivariant wave map into a surface of revolution. As in the case of $1$-equivariant wave maps considered in~\cite{LOS1}, both problems admit a family of stationary solutions  indexed by a parameter that determines how far the image of the map wraps around the target manifold.    
Here we show that if the image of a stationary solution is contained in a geodesically convex subset of the target, then it  is asymptotically stable in the energy space. However,  for a stationary solution that  covers a large enough portion of the target, we prove that the Schr\"odinger operator obtained by linearizing about such a harmonic map admits a simple positive eigenvalue in the spectral gap.  
As there is no a priori nonlinear obstruction to asymptotic stability, this gives evidence for the existence of metastable states (i.e., solutions with anomalously slow decay rates) in these simple geometric models.

\end{abstract}

\thanks{Support of the National Science Foundation,  DMS-1302782 and NSF 1045119 for the first and third authors, respectively, is gratefully acknowledged. The second author is a Miller Research Fellow, and acknowledges support from the Miller Institute.}
\maketitle

\section{Introduction}\label{intro}

We consider the $k$-equivariant wave maps equation from $\R \times \Hp^2 \to \Sp^2$ and the energy critical equivariant Yang-Mills problem on $\R \times \Hp^4$ with gauge group $SU(2)$. After the usual equivariant reductions, both equations take the form 
\EQ{ \label{eq wm}
\psi_{tt} - \psi_{rr} - \coth r \, \psi_r +k^2 \frac{ g(\psi)g'(\psi)}{\sinh^2 r} = 0,
}
where $(\psi, \theta)$ are geodesic polar coordinates on a target surface  of revolution $\M$, and $g$ determines the metric, $ds^2 = d \psi^2 + g^2(\psi) d \theta^2$. In the case of $k$-equivairant wave maps, we set $\M=\Sp^2$ and  $g(\psi) = \sin \psi$. For the Yang-Mills problem, $k = 2$ and  $g( \psi) =  \psi - \frac{1}{2} \psi^2$.  
Note that one can view the latter problem as a $2$-equivariant wave map from $\R \times \Hp^2$ into a surface of revolution which is diffeomorphic to $\Sp^2$ by restricting to  $\psi\in[0,2]$. Indeed, $g(0) = g(2) = 0$, $\psi =1$ is the unique zero of $g^\prime( \psi)$ and $\{ \psi \leq1\}$ defines the largest geodesically convex neighborhood of the ``north pole," $\psi=0.$ 

 There are several features of these models that make them interesting testing grounds in the study of asymptotic dynamics of dispersive equations on curved spaces. As we will see below,  the introduction of hyperbolic geometry on the domain allows for an abundance of finite energy stationary solutions to~\eqref{eq wm} -- these are harmonic maps in the case of the wave maps equation.  The curved background also eliminates any natural scaling invariance for the problem, thus removing an a priori obstruction to the asymptotic stability of stationary solutions. However the energy-criticality of these equations is still manifest since solutions that  concentrate at very small scales can be well approximated by solutions to the underlying Euclidean problems, which are energy critical;  to obtain the scale invariant Euclidean equation simply replace $ \coth r$ by $r^{-1}$ and $\sinh^2r$ by $r^2$ in~\eqref{eq wm}.  Taking this last point a bit further, one expects that the fundamental blow-up constructions for the corresponding Euclidean models of Krieger, Schlag, Tataru~\cite{KST, KST2}, Rodnianski, Sterbenz~\cite{RS}, and Rapha\"el, Rodnianski~\cite{RR} should carry over to the present hyperbolic setting. 
 
 In this paper, we study a more subtle effect of the underlying scale invariant Euclidean equation on the asymptotic dynamics of the  hyperbolic problem~\eqref{eq wm},  namely the formation of eigenvalues in the spectral gap of the Schr\"odinger operator obtained by linearizing about a stationary solution to~\eqref{eq wm} that wraps sufficiently far around the target manifold $\M$. This phenomenon was discovered in~\cite{LOS1} in the case of $1$-equivariant wave maps from $\R\times\Hp^2$ to $\Sp^2$. Here we establish the existence of gap eigenvalues for higher equivalence classes $k \ge2$, and for the equivariant Yang-Mills problem, while further elucidating the role that the geometry of the image of the stationary  map plays in the construction.  In particular, we show that there are no gap eigenvalues associated to $k$-equivariant harmonic maps and stationary solutions to the Yang-Mills problem whose image lies in a region of the target which is slightly larger than the maximal geometrically convex subset containing the north pole -- when the target is $\Sp^2$ this is simply the northern hemisphere plus a small band in the southern hemisphere. In fact, in this case we can find a uniform-in-$k$ latitude $\al_*$ (which is slightly below the equator), under which there are no gap eigenvalues.  Moreover, we find a uniform-in-$k$ latitude $\al^*$ (which is further into the southern hemisphere than $\al_*$), with the property that the linearized operator associated to any $k$-equivaraint harmonic map whose image contains the latitude $\al^*$ must have an eigenvalue in  its spectral gap.

Note that the existence (or the non-existence) of a gap eigenvalue of the linearized operator is a property of harmonic maps, independent of the wave map dynamics. The same remark applies to the Yang-Mills problem. This property, however, has a particularly interesting implication on the \emph{dispersive} dynamics (such as the wave map or Yang-Mills dynamics), rather than on the elliptic (harmonic map or elliptic Yang-Mills) or parabolic (harmonic map or Yang-Mills heat flow) analogs. Indeed, the gap eigenvalue gives rise to a time-periodic (hence non-decaying) solution to the linearized wave equation, which rules out any \emph{linear} mechanism for asymptotic stability of the stationary solution. This situation should be compared to the case of, for example, the linearized parabolic equation, whose solutions always exhibit exponential decay in time as the gap eigenvalue is positive. Nevertheless, there are reasons to believe that these stationary solutions are still asymptotically stable under the wave map (or Yang-Mills) dynamics, possibly by a \emph{nonlinear} mechanism called ``radiative damping''. We refer to Remark~\ref{rem:FGR} for further discussion.

In order to describe our main results in more detail we first give a more precise account of the setup. 
Let $(r, \theta)$ be geodesic  polar coordinates on the hyperboloid model of $\Hp^2$:
\ant{
[0, \infty) \times \bbS^1 \ni (r, \theta) \mapsto (\sinh r \sin \theta, \sinh r\cos \theta, \cosh r) \in \R^{2+1}.
}
Denote this map by $\Psi: [0, \infty) \times \bbS^1 \to (\R^{2+1}, \mathbf{m})$, where $\bf{m}$ is the Minkowski metric on $\R^{2+1}$. The  metric $\h$ on $\Hp^2$ in these coordinates is given by the pullback of the Minkowski metric by  $\Psi$, i.e.,  $\h = \Psi^*\mathbf{m}$ and $ \h =  \textrm{diag}(1, \sinh^2 r)$. 
The volume element is $\sqrt{\abs{\h(r, \theta)}} = \sinh r$, and thus for $f : \Hp^2 \to \R$, 
\ant{
\int_{\Hp^2} f(x) \,  \dvol_{\h} = \int_0^{2\pi} \int_0^{\infty} f(\Psi(r ,\theta)) \sinh r \, dr \, d  \theta.
}
For radial functions, $f: \Hp^2 \to \R$ we abuse notation and write $f(x) = f(r)$. 
\vspace{\baselineskip} 

\subsection{ $k$-equivariant wave maps $U: \R \times \Hp^2 \to \Sp^2$:}  We first describe our results for  wave maps $U: \R \times \Hp^2 \to \Sp^2$. Since the domain $\Hp^2$  and the target $\Sp^2$ are rotationally symmetric we can consider a restricted class of maps satisfying an equivariance assumption $U \circ \rho = \rho^k \circ U$, for all  $\rho \in SO(2)$.  
This leads to the usual $k$-equivariant anastz
\ant{
U(t, r, \theta) = (\psi(t, r), \theta) \hookrightarrow ( \sin \psi \sin k\theta, \sin \psi \cos  k\theta, \cos \psi).
}
Here $\psi$ measures the polar angle from the north pole of $\Sp^2$. 
In this formulation, $k$-equivariant wave maps are formal critical points of the Lagrangian
\ant{
\mathcal{L}(U)= \frac{1}{2} \int_{\R} \int_0^{\infty} \left( - \psi_t^2(t, r) + \psi_r^2(t, r) + k^2\frac{ \sin^2 \psi(t, r)}{\sinh^2 r} \right) \, \sinh r \, dr \, dt.
}
The Euler-Lagrange equations reduce to an equation for the angle $\psi$. Indeed we consider here the  Cauchy problem, 
\EQ{\label{wm}
&\psi_{tt} - \psi_{rr} - \coth r \, \psi_r + k^2\frac{ \sin(2 \psi)}{2 \sinh^2 r} = 0,\\
& \vec \psi(0)= ( \psi_0, \psi_1).
}
We use the notation $\vec \psi(t)$ to denote the vector $\vec \psi(t, r):= ( \psi(t, r), \psi_t(t, r))$. The conserved energy is given by
\EQ{\label{con energy}
\E( \vec \psi(t)) = \frac{1}{2} \int_0^{\infty}  \left( \psi_t^2 + \psi^2_r + k^2\frac{\sin^2 \psi}{\sinh^2 r} \right) \sinh r \, dr = \textrm{const}.
}
Note that in order for  initial data $\vec \psi(0) = (\psi_0, \psi_1)$ to have finite energy the above requires  $\psi_0(0) = \ell \pi$ for some $\ell \in \Z$. Moreover,  continuous dependence on the  initial data on a time interval $I$, dictates that the  integer $\ell$ is preserved by the evolution on $I$. We restrict to the case $\ell =0,$ corresponding to maps that send $r=0$ (the vertex of the hyperboloid) to the north pole of $\Sp^2$, as all other cases  can be obtained from this one via the change of variables $\psi \mapsto \psi+  \ell  \pi$. 

The behavior of finite energy data at $r=\infty$ is more flexible. Indeed,  $\psi_0(r)$ has a well-defined limit as $r\rightarrow\infty,$ however this limit can be any real number, i.e., $\E(\psi_0,\psi_1)<\infty$ means that  there exists $\alpha\in\R$ so that $\lim_{r\to \infty}\psi_0(r)=\alpha$.  This is in sharp contrast to the corresponding problem for Euclidean wave maps $\R^{1+2} \to \Sp^2$, where the endpoint can only be an integer multiple of $\pi$ --  such maps then have a fixed topological degree. Here, the fact that any finite endpoint is allowed can be attributed  to the rapid decay of $\sinh^{-1} r$ as $ r \to \infty$ in the last term in the integrand of~\eqref{con energy}, and is ultimately responsible for the existence of the family of finite energy harmonic maps to be described below.

Although arbitrary endpoints are allowed, here we only consider  maps so that $\psi_0( \infty) = \al$ for $\al \in [0, \pi)$, which means that we will only consider those $\psi_0$ that do not reach the south pole. For a fixed equivariance class $k \in \N$, the space of finite energy data with endpoints $\al \in [0, \pi)$ can be divided into disjoint classes, 
\EQ{
\E_{\la, k} := \{ ( \psi_0, \psi_1) \mid \E(\psi_0,\psi_1)< \infty, \, \,  \psi_0(0) =0, \, \, \psi_0( \infty) = 2 \arctan( \la^k)\}
}
for $\la \in [0, \infty)$. The reason for this restriction to endpoints $\al \in [0, \pi)$ is that in each $\E_{\la, k}$ there is a unique $k$-equivariant harmonic map $Q_{\la,k}$, i.e., a solution to 
\EQ{\label{hm}
&Q_{rr}+ \coth r\, Q_r = k^2\frac{\sin 2Q}{2\sinh^2 r},\\
&Q(0) = 0, \quad \lim_{r \to \infty} Q(r) = 2\arctan(\la^k).
}
Indeed, $Q_{\la, k}$ is given by the explicit formula, 
\EQ{
Q_{\la,k}(r) := 2\arctan(\la ^k\tanh^k(r/2)).
}
Moreover, $(Q_{\la,k}, 0)$ minimizes the energy in $\E_{\la, k}$,  
\ant{
\E( Q_{\la,k}, 0) = 1- \cos(Q_{\la,k}( \infty)) = 2k \frac{\la^{2k}}{ \la^{2k} +1}.
}
In other words, for each angle $\al \in [0, \pi)$ there exists a $k$-equivariant map connecting $0$ to $\al$ of minimum energy and this map is, in fact, the \emph{harmonic map} $Q_{\la,k}$, with $\la := \tan^{\frac{1}{k}}( \al/2)$.  We note that $\la =1$ corresponds to  $\al = \pi/2$ and thus $Q_{1, k}$ covers precisely the northern hemisphere of $\Sp^2$. For endpoints $\al \ge \pi$ there are \emph{no} finite energy harmonic maps. 

 The existence of the $Q_{\la,k}$ is in stark contrast to the corresponding Euclidean problem,  $k$-equivariant wave maps $\R^{1+2} \to \Sp^2$, which also reduces to an equation for the polar angle $\psi$:
  \EQ{\label{euc wm}
  \psi_{tt} - \psi_{rr} - \frac{1}{r} \psi_r + k^2\frac{ \sin 2 \psi}{2 r^2} = 0.
  }
  In fact, the unique (up to scaling and sign reversal) nontrivial Euclidean $k$-equivariant harmonic map is given by $Q_{\euc, k}(r) = 2 \arctan(r^k)$, which connects the north pole to the south pole of the sphere.  Indeed, $Q_{\euc, k}$  solves 
   \EQ{\label{euc hm}
 Q_{rr} + \frac{1}{r} Q_{r}  = k^2\frac{\sin 2Q}{2 r^2}, \, \, \, Q(0) = 0,
 }
and minimizes the Euclidean energy
 \EQ{
\E_{\euc} (\psi_0, \psi_1) =  \frac{1}{2} \int_0^{\infty}\left( (\p_r \psi_0)^2 + \psi_1^2 + k^2\frac{\sin^2 \psi_0}{r^2} \right)\, r \, dr
 }
amongst all $k-$equivariant maps which satisfy $\psi_0(0) = 0, \psi_0(\infty) = \pi$.  By direct computation we have $\E_{\euc}(Q_{\euc, k}, 0) = 2k$. We note that for the hyperbolic harmonic maps $Q_{\la,k}$ we have
 \ant{
&\E(Q_{\la,k}, 0) \to \E_{\euc}(Q_{\euc, k}, 0) \mas \la \to \infty, \\
&\E(Q_{\la,k}, 0) \to 0 \mas \la \to 0.
}
It is well known that  $Q_{\euc, k}$ is \emph{unstable} with respect to the Euclidean equivariant wave map flow. This instability leads to finite time blow-up, see~\cite{Struwe, Cote, KST, RS, RR}.  
\vspace{\baselineskip}

We now introduce the setup for studying the asymptotic dynamics of the wave map evolution~\eqref{wm} in the energy class $\E_{\la,k}$. The natural space in which to consider solutions to~\eqref{wm} with $\psi(t, \infty) = 0$ is the energy space
\EQ{\label{H0 def}
\| (\psi_0, \psi_1)\|_{\HH_0}^2:=  \int_0^{\infty}   \left((\p_r\psi_0)^2(r) + \psi_1^2(r) + k^2\frac{\psi_0^2(r)}{\sinh^2 r} \right) \, \sinh r \, dr.
}
To measure solutions with nontrivial endpoint $2\atan(\la^k)$, we endow $\E_{\la,k}$ with the  ``norm"
\EQ{
\| (\psi_0, \psi_1)\|_{\E_{\la,k}} := \| (\psi_0, \psi_1) -(Q_{\la,k}, 0)\|_{\HH_0}.
}
The central object of study in this paper will be the Schr\"odinger operator obtained by linearizing~\eqref{wm} about $\Qu$. 
Understanding this operator is fundamental to the study of the asymptotic dynamics of solutions near $\Qu$; see Remark~\ref{rem:stab} for a more precise discussion.
To define the linearized Schr\"odinger operator, we first pass to a radial wave equation on $\R\times\Hp^{2k+2}$ since the linear part of~\eqref{wm} provides more dispersion than a free wave on $\R \times \Hp^2$. In fact, as we will see below, for $0 \le\la^k < 1+ \de_0$ solutions to the linearized version of \eqref{wm} about $Q_{\la, k}$ enjoy the same dispersion as free waves on $\R \times \Hp^d$ with $d = 2k +2$. To see this we make a change of variables: for a solution $\vec \psi(t) \in \E_{\la,k}$ define $ u(t)$ by
\EQ{\label{u S2 def}
 \sinh^k r \,  u(t, r) :=  \psi(t, r) - Q_{\la,k}(r).
}
We obtain the following equation for $\vec u(t)$,
\EQ{\label{u eq}
&u_{tt}- u_{rr} - (2k+1) \coth r \, u_r - k(k+1) u + V_{\la,k}(r) u = \NN_{\Sp^2}(r, u)\\
&\vec u(0)= (u_0, u_1)
}
where the \emph{attractive} potential $V_{\la,k}$ is given by  
\begin{align}
& V_{\la,k}(r) := k^2\frac{ \cos 2Q_{\la,k} -1}{ \sinh^2 r} \le 0, \label{Vla}
\end{align}
and the nonlinearity $\NN_{\Sp^2}$ is 
\EQ{
&\NN_{\Sp^2}(r, u):= k^2\sin (2Q_{\la,k}) \frac{\sin^2( 2 \sinh ^kr\, u)}{ \sinh^{2+k} r}   \\
& \quad +k^2 \cos(2Q_{\la, k})\frac{2 \sinh r\, u -  \sin (2 \sinh ^kr \, u)}{2 \sinh^{2+k} r}. \label{N S}
}
In fact, one can show  that the Cauchy problem~\eqref{u eq} for data $(u_0, u_1) \in H^1  \times L^2 (\Hp^{2k+2})$, is equivalent to the Cauchy problem for the the polar angle $\vec \psi$, i.e.,~\eqref{wm}. We give a few details regarding this equivalence in Section~\ref{sec:pre}.

The underlying \emph{linear} equation is then, 
\EQ{ \label{v eq}
v_{tt} - \Delta_{\Hp^{2k+2}} v -k(k+1) v + V_{\la,k} v = 0
}
for radially symmetric functions $v(t):  \Hp^{2k+2} \to \R$, and we define self-adjoint Schr\"odinger operators
\EQ{\label{H Vla}
&H_{0,k}:=-\p_{rr}- (2k+1) \coth r \, \p_r -k(k+1),\\
&H_{V_{\la,k}} := -\p_{rr}- (2k+1) \coth r \, \p_r -k(k+1) +V_{\la,k}.
}
It is well known that the  spectrum~$ \sigma( H_{V_{\la, k}})$  plays a central role in determining the dispersive properties of~\eqref{v eq}, and thus in determining the asymptotic dynamics of $Q_{\la, k}$. We recall that the spectrum of the Laplacian on $\Hp^{d}$   is given by 
\ant{
 \s(- \Delta_{\Hp^{d}}) =  \left[ (d-1)^2/4, \infty \right)
 }
 and  therefore,  by setting  $d = 2k+2$ above we see that  for $H_{0, k}$, 
 \EQ{
 \s(H_{0, k}) = [ 1/4, \infty)
 }
The goal here will be to understand the spectrum of the perturbed operator $H_{V_{\la, k}}$. Our first result states that for harmonic maps $Q_{\la, k}$ whose image is contained in a region that is slightly larger than the northern hemisphere of the sphere, the corresponding linearized operator $H_{V_{\la, k}}$ has purely absolutely continuous spectrum equal to $[1/4, \infty)$. 
\begin{thm}\label{S2 no eval}
\begin{itemize}
\item [$(i)$] Fix any equivariance class $k \in \N$.  For each  $0 \le \lambda\leq1$,   the spectrum of $H_{V_{\la,k}}$ is purely absolutely continuous and is given by  $$\sigma(H_{V_{\la,k}})=[1/4,\infty).$$  In particular, $H_{V_{\la, k}}$ has no negative spectrum, no eigenvalues in the spectral gap $[0,1/4),$ and the threshold $1/4$ is neither a resonance nor an eigenvalue.
\item[$(ii)$] Let $\Theta = \Te(k) := \la^k$. There exists $\Te_*>1$, which is independent of $k \in \N$, so that if $\Theta <\Te_*$, then the same conclusions as in part $(i)$ hold for the spectrum of $H_{V_{\la,k}}$ with $\la = \Te^{1/k}$. In particular, there is an angle $$\al_*:= 2\arctan(\Te_*)> \pi/2$$ independent of $k$,  such that any $k$-equivariant harmonic map with  $$\lim_{r \to \infty} Q_{\la, k}(r) < \al_*$$  yields a linearized operator $H_{V_{\la, k}}$ satisfying the conclusions in part $(i)$. 
\end{itemize}
\end{thm}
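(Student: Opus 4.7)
The plan is to exploit a Bogomolny-type factorization of $H_{V_{\la,k}}$ afforded by the first-order identity $Q_{\la,k}'(r) = k \sin Q_{\la,k}(r)/\sinh r$ satisfied by the explicit harmonic maps. Specifically, starting from the classical factorization $\mathcal{L}_{Q_{\la,k}} = D_+^{*} D_+$ of the linearized harmonic-map operator $\mathcal{L}_Q = -\p_r^2 - \coth r\, \p_r + k^2\cos 2Q/\sinh^2 r$ (with $D_+ f = f' - (k\cos Q_{\la,k}/\sinh r) f$, adjoint taken in $L^2(\sinh r\, dr)$) and pulling back by the similarity transform $u \mapsto \sinh^k r \cdot u$ used in Section~\ref{sec:pre} to arrive at $H_{V_{\la,k}}$, I would establish the identity
\[
H_{V_{\la,k}} = D_\la^{*} D_\la, \qquad D_\la u = u' + \frac{k(\cosh r - \cos Q_{\la,k})}{\sinh r}\, u,
\]
where the adjoint is now taken in $L^2(\sinh^{2k+1}r\, dr)$. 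A direct computation then reveals the Darboux partner to take the remarkably simple form
\[
D_\la D_\la^{*} = H_{0,k} + \tilde V_{\la,k}, \qquad \tilde V_{\la,k}(r) = \frac{1 + 2k \cosh r \, \cos Q_{\la,k}(r)}{\sinh^2 r}.
\]

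For part~(i) with $0\le\la\le1$, the image of $Q_{\la,k}$ is contained in $[0,\pi/2]$, so $\cos Q_{\la,k}\ge 0$ pointwise and hence $\tilde V_{\la,k}(r) \ge \sinh^{-2}r > 0$ \emph{strictly}. Since $H_{0,k}$ (the appropriately shifted radial Laplacian on $\Hp^{2k+2}$) has purely absolutely continuous spectrum $[1/4,\infty)$ with no threshold eigenvalue or resonance, Weyl's theorem combined with the exponential decay of $\tilde V_{\la,k}$ at infinity yields $\sigma_{\text{ess}}(D_\la D_\la^{*})=[1/4,\infty)$, while the nonnegativity of $\tilde V_{\la,k}$ gives $D_\la D_\la^{*}\ge H_{0,k}\ge 1/4$. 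Standard commutator/limiting-absorption estimates rule out embedded eigenvalues and singular continuous spectrum, while pairing the eigenvalue equation $(H_{0,k}+\tilde V_{\la,k})v=v/4$ against $v$ together with the \emph{strict} positivity $\tilde V_{\la,k}>0$ excludes both a threshold eigenvalue and, via a regularized version of the same pairing justified by the exact asymptotic decay rate $e^{-(2k+1)r/2}$ of threshold solutions, a threshold resonance. These conclusions transfer to $H_{V_{\la,k}}=D_\la^{*}D_\la$ through the isospectrality $\sigma(A^*A)\setminus\{0\}=\sigma(AA^*)\setminus\{0\}$, giving $\sigma(H_{V_{\la,k}})\subset\{0\}\cup[1/4,\infty)$ with pure a.c.~spectrum on $[1/4,\infty)$ and no eigenvalue or resonance at $1/4$. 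Finally, $0\notin\sigma(H_{V_{\la,k}})$ because the kernel of the first-order operator $D_\la$ is one-dimensional and is spanned by $u_0=\p_\la Q_{\la,k}/\sinh^k r$ (verified by differentiating the Bogomolny identity in $\la$); an elementary computation shows $u_0\sim e^{-kr}$ as $r\to\infty$, so $u_0\notin L^2(\sinh^{2k+1}r\, dr)$ since the weight grows like $e^{(2k+1)r}$.

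For part~(ii), pointwise positivity of $\tilde V_{\la,k}$ breaks down once $\la^k>1$ because $\cos Q_{\la,k}$ becomes negative on the far end. The strategy is to replace pointwise positivity by the operator inequality $D_\la D_\la^{*}\ge 1/4$. After the Liouville conjugation $v=\sinh^{(2k+1)/2}r\cdot u$ to Schr\"odinger form on $L^2(dr)$, this reduces to showing $-\p_r^2+W_{\la,k}^{D}(r)\ge 0$ on $(0,\infty)$ with Dirichlet boundary at $0$, where
\[
W_{\la,k}^{D}(r)=\frac{4k^2+3}{4\sinh^2 r}+\frac{2k\,\cos Q_{\la,k}(r)\,\cosh r}{\sinh^2 r}.
\]
The positive centrifugal contribution $(4k^2+3)/(4\sinh^2 r)$ supplies a Hardy-type coercivity that can absorb the negative term $2k\cos Q_{\la,k}\cosh r/\sinh^2 r$ when $\la^k$ is slightly above $1$. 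Expressing $\cos Q_{\la,k}\cosh r$ in terms of the natural variable $\xi=\la^k\tanh^k(r/2)\in[0,\Theta]$ (so that $\cos Q_{\la,k}=(1-\xi^2)/(1+\xi^2)$) reduces the operator inequality to a one-dimensional pointwise problem depending only on $\Theta$ and $k$. The main obstacle is extracting the \emph{uniform-in-$k$} threshold $\Theta_*>1$: one must track the growth in $k$ of both the centrifugal barrier and the negative potential against each other and show that the optimal threshold does not degenerate to $1$ as $k\to\infty$, which requires a careful asymptotic analysis of the extremizing $r$ as a function of $k$.
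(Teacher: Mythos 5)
Your proposal for part~(i) is correct and takes a genuinely different route from the paper. The paper argues directly on the half-line operator $\calL_{V_{\lmb,k}}$ via a Sturm comparison against the test function $g(Q_{\lmb,k})=\sin Q_{\lmb,k}$: one checks $(-\rd_r^2+U)\sin Q_{\lmb,k}=0$ for an explicit $U$, computes $V-U=\sinh^{-2}r\,(k\cosh r\cos Q_{\lmb,k}-\tfrac14)\ge\tfrac34\sinh^{-2}r$ for $\lmb\le 1$, and then integrates by parts against $\sin Q_{\lmb,k}$ to force a contradiction for any bounded solution of $\calL_{V_{\lmb,k}}\phi=\mu^2\phi$ with $\mu^2\le 1/4$. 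Your route instead exhibits the supersymmetric/Bogomol'nyi factorization $H_{V_{\lmb,k}}=D_\lmb^*D_\lmb$ with superpotential $a(r)=k(\cosh r-\cos Q_{\lmb,k})/\sinh r$ (equivalently $-(\log u_0)'$ for the zero mode $u_0=\rd_\lmb Q_{\lmb,k}/\sinh^k r$, which is proportional to $\sin Q_{\lmb,k}/\sinh^k r$), and computes the Darboux partner $D_\lmb D_\lmb^*=H_{0,k}+\tilde V_{\lmb,k}$ with $\tilde V_{\lmb,k}=(1+2k\cosh r\cos Q_{\lmb,k})/\sinh^2 r$. I verified both identities; they are correct. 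Your approach and the paper's both hinge on $\cosh r\cos Q_{\lmb,k}\ge 1$ when $\lmb\le 1$, but the Darboux version makes the operator positivity $D_\lmb D_\lmb^*\ge 1/4+\tilde V_{\lmb,k}>1/4$ transparent, at the cost of having to invoke the isospectrality $\sgm(A^*A)\setminus\{0\}=\sgm(AA^*)\setminus\{0\}$ and then separately transfer the ``no threshold resonance'' claim (which is not a spectral statement in the isospectrality sense). Your sketch of that last step --- apply $D_\lmb$ to a putative resonance $u$ of $H_{V_{\lmb,k}}$ to get a resonance-type solution for $D_\lmb D_\lmb^*$, then rule it out via a cutoff/regularized pairing exploiting the decay $u\sim e^{-(k+1/2)r}$ --- does work, but as written it is an acknowledged gap rather than a proof. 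One small imprecision: $\tilde V_{\lmb,k}$ is \emph{not} an exponentially decaying perturbation of $H_{0,k}$ near $r=0$ (it blows up like $(1+2k)/r^2$, effectively raising the angular momentum from $k$ to $k+1$), so the Weyl-theorem line needs care, though the conclusion $\sgm_{\mathrm{ess}}=[1/4,\infty)$ is still right.

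For part~(ii) there is a genuine gap. Once $\Tht=\lmb^k>1$, the pointwise inequality $\tilde V_{\lmb,k}\ge 0$ fails badly: $\cos Q_{\lmb,k}(r)\to\cos\al<0$ and $\cosh r\to\infty$, so $1+2k\cosh r\cos Q_{\lmb,k}\to-\infty$, and no pointwise Hardy-type absorption can fix a potential that is unboundedly negative on a region of growing size. The operator inequality $-\rd_r^2+W^D_{\lmb,k}\ge 0$ is therefore a genuinely global statement, not a ``one-dimensional pointwise problem'' in $\xi=\Tht\tanh^k(r/2)$; rewriting in $\xi$ merely repackages the dependence on $(\Tht,k)$. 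The heart of part~(ii) is precisely the uniform-in-$k$ conclusion, and your proposal names this obstacle without overcoming it. For reference, the paper resolves it by a compactness argument: after passing to the variable $\rho=\Tht\tanh^k(r/2)$ and using the monotone limit $\om_{k,\Tht}\nearrow\log^{-1}(\Tht/\rho)$, one identifies a limit operator $\calL_{\infty,\Tht}$, proves via a Sturm comparison against $\Phi(\rho)=\rho/(1+\rho^2)$ that $\calL_{\infty,\Tht}$ has no gap eigenvalue or threshold resonance for $\Tht\le 1$ (and hence slightly above $1$ by openness), and then derives a contradiction from a hypothetical sequence $k_n\to\infty$, $\Tht_n\searrow 1$ of bad parameters by extracting a uniform limit of normalized positive solutions and showing that it would furnish an eigenfunction or resonance of $\calL_{\infty,1}$. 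If you want to salvage the Darboux framework for part~(ii), you would still need a limiting argument of this type; the ``careful asymptotic analysis of the extremizing $r$'' you gesture at is not a plan but a restatement of the difficulty.
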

\begin{rem}  \label{rem:stab}
One can use the conclusions of Theorem~\ref{S2 no eval} to prove Strichartz estimates for~\eqref{v eq} by viewing $H_{V_{\la, k}}$ as a perturbation of $H_{0,k}$. The proof relies on the \emph{distorted Fourier transform} relative to $H_{V_{\la, k}}$ and the associated Weyl-Titchmarsh theory. Indeed, following the arguments in~\cite[Section $4$]{LOS1} one can prove integrated localized energy estimates by establishing  precise decay estimates for  the spectral measure associated to $H_{V_{\la, k}}$, which in turn can be used to establish Strichartz estimates. With Strichartz estimates in hand, it is then standard to prove a small data scattering result for~\eqref{u eq}, from which the asymptotic stability of $Q_{\la, k}$ is an immediate consequence; see \cite[Section $5$]{LOS1} for a detailed proof in the case $k = 1$. Indeed, one can show that for all $\Te < \Te_*$  and $\la = \Te^{1/k}$, there exists $\de_0>0$ so that for every $(\psi_0, \psi_1) \in \E_{\la,k}$ with
\ant{
 \|(\psi_0, \psi_1)- (Q_{\la,k}, 0)\|_{\HH_0}< \de_0
}
there exists a unique global solution $\vec \psi(t) \in \E_{\la,k}$ to~\eqref{wm}, which  \emph{scatters to $(Q_{\la,k}, 0)$} as $t \to \pm \infty$.


\end{rem}

\begin{rem}\label{rem:open} 
The arguments in Section~\ref{spectra} show that geodesic convexity of the image, which corresponds to $\la^k \le 1$, is a sufficient condition for the asymptotic stability of the harmonic maps $Q_{\la, k}$. However, Theorem~\ref{S2 no eval}$(ii)$ shows that this condition is not necessary. Indeed, in~\cite{LOS1} the authors showed that in the case $k=1$, the conclusions of Theorem~\ref{S2 no eval} hold for $0 \le \la< \sqrt{15/8}$ and numerical simulations suggest that the first failure of Theorem~\ref{S2 no eval} happens for some $\la_0 > 3.4$ in the $1$-equivariant case. 

\end{rem}

Next, we demonstrate an interesting change in the spectrum $\s(H_{V_{\la, k}})$, which causes a breakdown in the dispersive behavior of solutions to the linearized equation~\eqref{v eq}. Fixing the equivariance class $k \in \N$ and taking $\la$ to be a large number, 
 we establish the existence of a unique, simple eigenvalue $\mu_{\la, k}^2$ in the spectral gap $(0, 1/4)$. Moreover, we show that for each fixed class $k$, the eigenvalue $\mu_{\la, k}^2$ migrates to $0$ as $\la \to \infty$. Finally, we find a uniform in $k$ latitude $\al^*$ on the sphere so that for any equivariance class $k \in \N$ and harmonic map $Q_{\la, k}$ which contains $\al^*$ in its image, the corresponding linearized operator $H_{V_{\la, k}}$ has an eigenvalue in its spectral gap.   Note that the existence of $\mu_{\la, k}^2$ precludes a proof of the nonlinear asymptotic stability of $Q_{\la, k}$ via a purely linear dispersive  mechanism. 



\begin{thm}\label{e val} \begin{enumerate}[(i)] \item For each $k\geq1$ there exists $\Lambda_0=\Lambda_0(k)>0$ so that for all $\la > \La_0$, the Schr\"odinger operator $H_{V_{\la,k}}$ has a unique simple eigenvalue $\gapE$ in the spectral gap $(0, 1/4)$. That is, there exists a unique number $\gapE \in (0, 1/4)$ and a unique nonzero solution $\fy_\la \in L^2(\Hp^{2k+2})$ to
\EQ{\label{e vec}
H_{V_{\la,k}} \fy_{\la} =  \gapE \fy_{\la}.
}
The operator $H_{V_{\la, k}}$ has no threshold resonance at $1/4$ in the sense of Definition~\ref{res def} below. Moreover, the eigenvalue $\gapE$ migrates to $0$ as $\la \to \infty$, i.e.,
\EQ{ \label{mu to 0}
\gapE \to 0 \mas \la \to \infty.
}
\item Let  $\Theta:=\la^k$.  There exists $\Theta^*>0$ large enough, and independent of $k$,  so that for all $\Theta>\Theta^*$ the Schr\"odinger operator $H_{V_{\la,k}}$ (with $\la =\Te^{\frac{1}{k}}$) admits a gap eigenvalue in $(0,1/4)$. In particular, there exists an angle $\al^*< \pi$ large enough so that any $k$-equivariant harmonic map with  $$\lim_{r \to \infty} Q_{\la, k}(r) > \al^*$$  yields a linearized operator $H_{V_{\la, k}}$ with an eigenvalue in the spectral gap $(0, 1/4)$. 
\end{enumerate}
\end{thm}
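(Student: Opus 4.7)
The plan is to analyze $H_{V_{\la,k}}$ directly as an ODE on the half line. Conjugating by $\sinh^{k+1/2}r$ yields the unitarily equivalent half-line Schr\"odinger operator
\begin{align*}
\ti H_{V_{\la,k}} = -\p_r^2 + \frac{(2k+1)(2k-1)}{4\sinh^2 r} + \frac{1}{4} + V_{\la,k}(r) \quad \text{on } L^2((0,\infty),dr),
\end{align*}
whose essential spectrum is $[1/4,\infty)$ (since $V_{\la,k}$ decays exponentially), so anything in $[0,1/4)$ is a discrete $L^2$-eigenvalue. The key geometric observation is that under the self-similar rescaling $r = s/\la$,
\begin{align*}
\la^{-2} V_{\la,k}(s/\la) \longrightarrow W_k(s) := -\frac{2k^2(s/2)^{2k-2}}{(1+(s/2)^{2k})^2} \qquad\text{as } \la\to\infty,
\end{align*}
and (after dividing by $\la^2$) $\ti H_{V_{\la,k}} - \tfrac14$ converges in this limit to the Euclidean linearized operator $\hat L_{\euc} = -\p_s^2 + \frac{(2k+1)(2k-1)}{4s^2} + W_k(s)$ associated with $Q_{\euc,k}(r) = 2\arctan(r^k)$. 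This $\hat L_\euc$ admits the explicit zero mode $\phi_\euc(s) = \frac{2k\, s^{k+1/2}}{1+s^{2k}}$, arising from the broken scaling symmetry.

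For existence of a gap eigenvalue (Part (i)) I apply min--max with $f_\la(r) := \chi(r)\phi_\euc(\la r)$, $\chi$ a harmless cutoff. For $k \ge 2$, $\phi_\euc \in L^2(ds)$, and an expansion of $\langle f_\la,(\ti H_{V_{\la,k}}-\tfrac14)f_\la\rangle$ in $1/\la$ shows that the leading term vanishes since $\phi_\euc$ is a zero mode of $\hat L_\euc$, while the next-order correction is strictly negative---the main contribution coming from the Taylor expansion $\frac{1}{\sinh^2(s/\la)} = \frac{\la^2}{s^2} - \frac{1}{3} + O(\la^{-2})$ (yielding $-\frac{(2k+1)(2k-1)}{12\la}\|\phi_\euc\|_{L^2}^2$) together with an analogous correction to $V_{\la,k}(s/\la)$. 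For $k=1$, $\phi_\euc$ is only a resonance and a more delicate truncated trial function, as in~\cite{LOS1}, is needed. Uniqueness and simplicity of the gap eigenvalue then follow from a standard Wronskian/oscillation argument on the half line, using that the positive formal zero mode $u_\la := \p_\la Q_{\la,k}/\sinh^k r$ of $H_{V_{\la,k}}$ certifies the absence of eigenvalues below $E=0$, so the Wronskian of regular-at-$0$ and decaying-at-$\infty$ solutions has exactly one simple zero in $(0,1/4)$ for $\la$ large.

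To preclude a threshold resonance at $\mu^2=1/4$ I analyze $\ti H_{V_{\la,k}}\psi = \tfrac14\psi$: its two independent solutions at infinity behave like $1$ and $r$, and a Volterra iteration exploiting the exponential decay of $V_{\la,k}$ shows that the unique solution matching the regular $r^{k+1/2}$ behavior at $r=0$ has a nonzero coefficient on the linearly growing branch, precluding any half-bound state. For the migration $\gapE\to 0$ as $\la\to\infty$, I run matched asymptotic expansions: in the inner region ($r\lesssim 1/\la$) the eigenfunction is approximated by $\phi_\euc(\la r)$, and in the outer region ($r\gtrsim 1/\la$) by the decaying Legendre-type solution of the unperturbed equation $-\p_r^2+\frac{(2k+1)(2k-1)}{4\sinh^2 r}+\tfrac14$ at energy $\mu^2$. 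Matching the coefficients of the two indicial roots $r^{k+1/2}$ and $r^{1/2-k}$ at the overlap produces a quantization condition which, upon taking $\la\to\infty$, forces $\sqrt{1/4-\gapE}\to 1/2$, i.e.\ $\gapE\to 0$.

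For the uniform-in-$k$ statement (Part (ii)) I work directly with the quadratic form, using a trial function supported in a narrow neighborhood of the crossing point $r_*$ where $Q_{\la,k}(r_*)=\pi/2$. Differentiating $\tan(Q/2)=\Theta\tanh^k(r/2)$ at the crossing gives $Q_{\la,k}'(r_*) = k/\sinh r_*$, so a bump of width $\sim \sinh r_*/k$ supported where $\sin^2 Q_{\la,k}>1/2$ has kinetic energy $\sim k^2/\sinh^2 r_*$, centrifugal $\frac{(2k+1)(2k-1)}{4\sinh^2 r_*}$, and attractive potential $V_{\la,k}\sim -2k^2/\sinh^2 r_*$, all of the same $k^2/\sinh^2 r_*$ order; the sum $\langle f,(\ti H_{V_{\la,k}}-\tfrac14)f\rangle$ is strictly negative with an explicit $k$-independent coefficient, provided the support fits inside $\{Q_{\la,k}\in(\pi/4,3\pi/4)\}$, which holds as soon as $\Theta = \la^k$ exceeds a $k$-independent threshold $\Theta^*$. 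The main obstacle is the matched asymptotic step behind $\gapE\to 0$: producing sharp control on the eigenvalue location requires careful tracking of both indicial coefficients through the intermediate overlap, and the $k=1$ case demands separate treatment since the inner profile is only a resonance (giving logarithmic corrections).
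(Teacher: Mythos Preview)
Your approach is genuinely different from the paper's. The paper never uses min--max or trial functions; it proves existence, uniqueness, no-resonance, and migration entirely via Sturm oscillation, built around the explicit positive solution $\zeta_0^\lambda(r)=\sinh^{1/2}r\,\partial_\lambda Q_{\lambda,k}(r)$ of $\mathcal L_{V_{\lambda,k}}\zeta=0$. For existence the paper takes the solution $\phi$ of $\mathcal L_{V_{\lambda,k}}\phi=\tfrac14\phi$ regular at $0$, sets $f=\phi/\zeta_0^\lambda$ in the rescaled variable $\rho=\lambda r/2$, and shows by direct estimates on the integral equation $(f'\zeta^2)'=-\lambda^{-2}\zeta^2 f$ that $f$ must change sign for large $\lambda$; uniqueness and no-resonance come from a Sturm comparison with $e^{-mr}$ (resp.\ the constant $1$) on $[C/\lambda,\infty)$, using $k^2\cos 2Q_{\lambda,k}-\tfrac14>0$ there, combined with a perturbative argument on $[0,C/\lambda]$; migration is the existence argument rerun at energy $\bar\mu^2$ in place of $1/4$. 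For Part~(ii) the paper passes to the variable $\rho=\Theta\tanh^k(r/2)$, identifies the formal $k\to\infty$ limit, and reruns the same oscillation scheme uniformly in $k$.

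Your variational route has a genuine gap at the decisive step. You assert that the $O(\lambda^{-1})$ correction to $\langle f_\lambda,(\tilde H_{V_{\lambda,k}}-\tfrac14)f_\lambda\rangle$ is strictly negative, quoting the centrifugal piece and ``an analogous correction to $V_{\lambda,k}$.'' But after combining centrifugal and potential into $\frac{k^2\cos 2Q_{\lambda,k}-1/4}{\sinh^2 r}$, the next-order term has two competing contributions: one from $\sinh^{-2}(s/\lambda)=\lambda^2 s^{-2}-\tfrac13+\cdots$, which gives $-\tfrac13\int(k^2\cos 2\tilde Q_{\euc}-\tfrac14)\phi_\euc^2\,ds$ (not obviously of one sign, since $\cos 2\tilde Q_\euc$ changes sign), and one from $\cos 2Q_{\lambda,k}-\cos 2\tilde Q_\euc$, which also changes sign. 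You have not verified that their sum is negative, and this is exactly the borderline computation the paper's ODE argument avoids. Your no-resonance argument is also incorrect as stated: Volterra iteration near $r=\infty$ produces a fundamental system with asymptotics $1+o(1)$ and $r+o(r)$, but whether the solution regular at $0$ has nonzero coefficient on the linearly growing branch is a \emph{global connection coefficient}, not determined by any local iteration---a resonance is precisely the (nongeneric) case where that coefficient vanishes. Finally, for Part~(ii) your equator-bump idea is natural, but the kinetic, centrifugal, and attractive pieces you list are all of the \emph{same} order $k^2/\sinh^2 r_*$, and the width of $\{Q\in(\pi/4,3\pi/4)\}$ is itself $\sim\sinh r_*/k$, matching the minimal bump width; whether the Rayleigh quotient is negative with a $k$-independent margin is a real computation you have not done.
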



\begin{rem}  \label{rem:res}
As in~\cite{LOS1} we note that, if we set  
\ant{
&\lambda_{\sup}(k):=\sup\{ \la \mid H_{V_{\ti \lambda,k}} \,  \textrm{has no e-vals and no threshold resonance} \,  \forall \, \ti \la < \la\} \\
&\Lambda_{\inf}(k):=\inf\{ \la \mid H_{V_{\ti \lambda,k}} \, \, \textrm{has a gap e-val}\, \,  \mu^2_{\ti{\la}} \in (0, 1/4), \, \,  \, \forall \,  \ti \la > \la\}
}
then both $H_{V_{\la_{\sup},k}}$ and $H_{V_{\Lambda_{\inf},k}}$ have a threshold resonance. Indeed, this is a consequence the proof of Theorem~\ref{e val} from which one can deduce that having, or not having, gap eigenvalues are open conditions in $\la$. See~\cite[Proposition~$3.6$]{LOS1} for details. We also remark that $\lmb_{\sup}$ and $\Lmb_{\inf}$ could very well coincide.
\end{rem}

\begin{rem}
We note that for $k = 1$, Theorem~\ref{e val} was proved in~\cite{LOS1}.  Here we address the remaining equivariance classes $k  \ge2$. The proof of the existence of gap eigenvalues for $k \ge 2$ given here is somewhat less involved than the existence proof given for the case $k=1$ in~\cite{LOS1}. This can be attributed to the fact that for $k\ge 2 $ the operators obtained by linearizing the Euclidean wave maps equation about $Q_{\euc, k}(r) = 2 \arctan(r^k)$ have threshold eigenvalues, while in the $k=1$ case the Euclidean linearized operator has a threshold resonance. 
Indeed, in the argument given in Section~\ref{s:eval}, we note that the final inequality~\eqref{fneg} \emph{does not} guarantee  a sign change for large $\la$ when $k=1$. In the $k=1$ case, the crucial sign change requires a delicate analysis of the last term in the first line of~\eqref{frho}, where a logarithmic divergence can be extracted, see~\cite[Section $3.3$]{LOS1}. 
\end{rem}

\begin{rem} \label{rem:FGR}
Finally, we remark that the existence of a gap eigenvalue for $H_{V_{\la, k}}$ means that one cannot prove Strichartz estimates for the the~\emph{linear} equation
\EQ{ \label{lin eq} 
(\p_t^2 + H_{V_{\la, k}})u = 0
} 
and hence asymptotic stability of $Q_{\la, k}$ cannot be established  directly via linear dispersive estimates together with a perturbative argument.  Indeed, if the eigenvalue and associated eignevector for $H_{V_{\la, k}}$ are given by $\mu_\la^2$ and $\fy_{\mu_{\la}}(r)$, the solution $u(t, r) = e^{\pm i t \mu_\la} \fy_{\mu_\la}(r)$ to~\eqref{lin eq}  does not disperse. 

On the other hand, there are several reasons  to believe (on a heuristic level) that \emph{every} $Q_{\la, k}$ is asymptotically stable in the energy topology.  In particular, we show in a forthcoming work that finite time blow-up results in the bubbling off of a $k$-equivariant Euclidean harmonic map, i.e., when say $k=1$, for any solution $\vec \psi(t)$ that blows up at time $t=1$, there exists a sequence of times  $t_n \to 1$, and a sequence of scales $\be_n = o(1-t_n)$ so that 
\ant{
\vec \psi(t_n) - (Q( \cdot/ \be_n), 0) \to 0 \mas n \to \infty
}
\emph{locally} in the energy topology. This means that $\vec \psi$ must have energy $\E> \E_{\euc}(Q_{\euc, k})$ in order to blow up. Since $Q_{\la, k}$ has energy strictly less than the Euclidean energy of $Q_{\euc, k}$, we know that any small perturbation of $Q_{\euc, k}$ is defined globally in time. Moreover, since $Q_{\la, k}$ minimizes the energy in the class $\E_{\la, k}$, one can follow the classical argument of Cazenave and  Lions~\cite{CL82} to prove the Lyapunov stability of $Q_{\la, k}$. 

The remaining question is the following: can one describe the asymptotic dynamics of small perturbations of $Q_{\la, k}$ when the linearized operator $H_{V_{\la, k}}$ has a gap eigenvalue? A possible approach to answering this question is given in the work of Soffer and Weinstein~\cite{SW99}, see also the subsequent developments~\cite{GS07, CM08, Miz08, BC11}. In the framework introduced in~\cite{SW99} the  decay of solutions to a nonlinear Klein Gordon equation with a potential -- chosen so that that the linearized operator has a simple gap eigenvalue -- is reduced to a verification of a so-called ``\emph{nonlinear Fermi Golden Rule}" condition, see~\cite[equation $(1.8)$ and Remark~$1$]{SW99}. In particular, if the nonlinear Fermi Golden Rule is satisfied, then constant multiples of the eigenfunction decay at a slow rate determined by how far the eigenvalue is from the edge of the continuous spectrum and via a mechanism known as \emph{radiative damping}. Here the decay comes from a purely nonlinear mechanism as  the nonlinearity of the equation forces the  energy corresponding to the eigenfunction to eventually leak into the continuous spectrum.  

The equations studied here are proposed as \emph{explicit} natural models in which to study the phenomena of dispersion by radiative damping near nontrivial stationary solutions. We remark that although the nonlinear Fermi Golden Rule is known to hold for generic potentials \cite{SW99, BC11}, there does not seem to be a convenient criterion for verifying this condition for specific potentials, such as those arising in this paper.

\end{rem}

\subsection{Equivariant Yang-Mills on $\R \times \Hp^4$}
Consider the Yang-Mills equations on $\R \times \Hp^4$ with gauge group $SU(2)$. 
The dynamic variable is a connection $A_\mu$, which is a one-form with values in $\mathfrak{su}(2)$, the Lie algebra of $SU(2)$.
Associated with $A$ we have the covariant derivative $D_\mu:=\nabla_\mu+[A_\mu,\cdot],$ where $[\cdot,\cdot]$ denotes the Lie bracket. In terms of the curvature two-form $F_{\mu\nu}:=\nabla_\mu A_\nu-\nabla_\nu A_\mu+[A_\mu,A_\nu]$ the Yang-Mills functional is given by\footnote{We adopt the usual convention of raising and lowering indices by the Minkowski metric, and summing repeated upper and lower indices.}
\ant{
\LL_{\mathrm{YM}}(F)=\int_{\Hp^4\times\R} F^{\mu\nu}F_{\mu\nu},
}
with associated Euler-Lagrange equations
\EQ{\label{eqF}
D^\mu F_{\mu\nu}=0.
}

We now formulate the notion of an equivariant Yang-Mills field, following the general framework described in~\cite[Section 2]{SSTZ}. For this purpose it is useful to use the Poincar\'e disk model of $\bbH^{4}$, which is related to the polar coordinates on $\bbH^{4}$ by the map
\begin{equation*}
	\Psi : \bbD \setminus \set{0} \ni x \mapsto (2 \, \arctanh\abs{x}, \frac{x}{\abs{x}}) \in (0, \infty) \times \bbS^{3},
\end{equation*}
where $\bbD := \set{x \in \bbR^{4} : \abs{x} < 1}$ and $\abs{x} := \sqrt{(x^{1})^{2} + \cdots + (x^{4})^{2}}$. The metric then takes the form
\begin{equation*}
	g_{\bbH^{4}} = \bb(\frac{2}{1-\abs{x}^{2}}\bb)^{2} g_{\bbD^{4}}, \quad 
	\hbox{ where } g_{\bbD^{4}} := (\ud x^{1})^{2} + \cdots + (\ud x^{4})^{2}.
\end{equation*}
The rotation group $SO(4)$ acts on $\bbH^{4}$ as isometries by the natural action on $\bbD^{4}$. To specify the action of the symmetry group on $\mathfrak{su}(2)$, however, we furthermore pass to the double cover\footnote{This covering can be conveniently realized in the language of quaternions; see \cite[Section 2.1]{SSTZ}} of $SO(4)$, namely the product group $SU(2) \times SU(2)$. The action of $SO(4)$ on $\bbH^{4}$ lifts to $SU(2) \times SU(2)$, and we use the projection $(p, q) \mapsto q$ to specify the action of $SU(2) \times SU(2)$ on $\mathfrak{su}(2)$. 

We say that a connection $A$ on $\bbR \times \bbH^{4}$, which is an $\mathfrak{su}(2)$-valued 1-form, is \emph{equivariant} under $SU(2) \times SU(2)$ if
\begin{equation} \label{eq:equiv4YM}
	A(t, x; v^{0}, v) = q^{-1} A(t, (p, q) \cdot x; v^{0}, (p, q) \cdot v) q
\end{equation}
for all $(t, x) \in \bbR \times \bbH^{4}$, $(v^{0}, v) \in T_{t,x} (\bbR \times \bbH^{4})$ and $(p, q) \in SU(2) \times SU(2)$, where $(p, q) \cdot x$ denotes the action of $SU(2) \times SU(2)$ on $\bbH^{4}$, and $(p, q) \cdot v$ is the induced push-forward $T_{x} \bbH^{4} \mapsto T_{(p, q) \cdot x} \bbH^{4}$.

By the same argument as in \cite[Section 2.2]{SSTZ}, the condition \eqref{eq:equiv4YM} leads to the following ansatz for an equivariant connection $A$ on $\bbR \times \bbH^{4}$:
\begin{equation*}
	A(t, \abs{x}, 0, 0, 0 ; v^{0}, v^{1}, \ldots, v^{4}) 
	= - \frac{\psi(t, \abs{x})}{\abs{x}} (v^{2} \sgm_{2} + v^{3} \sgm_{3} + v^{4} \sgm_{4}),
\end{equation*}
where $\psi$ is a radial function on $\bbR \times \bbH^{4}$ and $\set{\sgm_{2}, \sgm_{3}, \sgm_{4}}$ is an orthonormal basis for the Lie algebra $\mathfrak{su}(2)$ of complex-valued $2 \times 2$ anti-hermitian matrices with vanishing trace\footnote{A particular representation is
\begin{equation*}
	\sgm_{2} = \ttmatrix{i}{0}{0}{-i}, \quad
	\sgm_{3} = \ttmatrix{0}{1}{-1}{0}, \quad
	\sgm_{4} = \ttmatrix{0}{i}{i}{0}.
\end{equation*}
The bi-invariant inner product on $\mathfrak{su}(2)$ is defined by $(A, B) := \frac{1}{2} \tr(A B^{\ast})$.
We note that such $\sgm_{2}, \sgm_{3}, \sgm_{4}$ are related to the usual Pauli matrices $\sgm_{x}, \sgm_{y}, \sgm_{z}$ by $\sgm_{2} = \frac{1}{i} \sgm_{z}$, $\sgm_{3} = \frac{1}{i} \sgm_{y}$ and $\sgm_{4} = \frac{1}{i} \sgm_{x}$.}.
The ansatz is given only for points on the $x^{1}$-axis, but it can easily be extended to the whole $\bbR \times \bbH^{4}$ by the condition \eqref{eq:equiv4YM}. The reduced Lagrangian in the coordinates $(t, s) = (t, \abs{x})$ then reads
\begin{equation*}
	\calL_{\mathrm{YM}}(A) = 3 \omg_{3} \int_{\bbR} \int_{0}^{1} \bb( - \psi_{t}^{2} + \frac{(1-s^{2})^{2}}{4}\psi_{s}^{2} + \frac{(1-s^{2})^{2}}{4 s^{2}} ( \psi (2 - \psi ))^{2} \bb) \frac{2s}{1-s^{2}}\,\ud s \, \ud t,
\end{equation*}
where $\omg_{3}$ is the volume of the unit $3$-sphere.

Computing the Euler-Lagrange equation and making a change of coordinates $s \mapsto r = 2 \arctanh s$, the system~\eqref{eqF} reduces to the following semi-linear wave equation for $\psi$, 
\EQ{\label{YM reduced eq}
&\psi_{tt}-\psi_{rr}-\coth r \psi_r+4\frac{(\psi-\frac{\psi^2}{2})(1-\psi)}{\sinh^2r}=0,\\
&\vec{\psi}(0)=(\psi_0,\psi_1).
}
with conserved energy 
\ali{\label{YM energy}
\EE_{\YM}(\psi)=\frac{1}{2}\int_0^\infty\left(\psi_t^2+\psi_r^2+4\frac{(\psi-\frac{\psi^2}{2})^2}{\sinh^2r}\right)\sinh r dr.
}
The Euclidean equivariant Yang-Mills system on $\bbR^{1+4}$ can be similarly reduced to 
\EQ{\label{euc YM}
&\phi_{tt}-\phi_{rr}-\frac{1}{r} \phi_r+4\frac{(\phi-\frac{\phi^2}{2})(1-\phi)}{r^2}=0,\\
&\vec{\phi}(0)=(\phi_0,\phi_1).
}
with conserved energy 
\ali{\label{euc YM energy}
\EE_{\YM}^\euc(\phi)=\frac{1}{2}\int_0^\infty\left(\phi_t^2+\phi_r^2+4\frac{(\phi-\frac{\phi^2}{2})^2}{r^2}\right)r dr.
}

We remark that~\eqref{YM reduced eq} can be thought of as a 2-equivariant wave map equation into a surface of revolution with metric $dv^2+g_{\YM}^2(v)du^2$ where $g_{\YM}(v):=v-\frac{v^2}{2},$ at least for $v\in(0,2)$ where the metric is non-degenerate.  Similar to the  case of equivariant wave maps, the requirement that  the initial data $(\psi_0, \psi_1)$  have finite energy mandates that $\psi_0(0)  \in \{0, 2\}$. Here we restrict to the case $\psi_0(0) = 0$. However, the behavior at $r = \infty$ is more flexible. While the Euclidean equivariant Yang-Mills~\eqref{euc YM} and~\eqref{euc YM energy} require that  $\lim_{r\to \infty} \phi_0(r) \in \{0, 2\}$ in order for the energy to be finite, the hyperbolic space versions~\eqref{YM reduced eq} and~\eqref{YM energy} allow $ \psi_0(r)$ to take  any finite limit as $r \to \infty$.

 
 As in the case of wave maps, we can divide the energy space into disjoint classes, which depend on the endpoint of the initial data at $r = \infty$. Here we restrict to endpoints $ \psi_0( \infty) \in [0, 2)$, and define 
\ant{
\EE^\YM_{\lambda}:=\left\{(\psi_0,\psi_1)|\EE_\YM(\vec{\psi})<\infty,~\psi_0(0)=0,~\psi_0(\infty)=\frac{2\lambda^2}{1+\lambda^2}\right\}
}
for all  $\lambda\in[0,\infty)$.  The reason for considering only these endpoints is that in each class $\EE^{\YM}_{\la}$ there is a unique stationary solution 
\EQ{\label{YM solitons}
Q_{\YM, \la}(r):=\frac{2\lambda^2\tanh^2(r/2)}{1+\lambda^2\tanh^2(r/2)}
} 
which minimizes the energy in this class. We note that $Q_{\YM, \la}$ solves, 
\ali{\label{YM elliptic eq}
&Q_{rr}+\coth r \, Q_r=4\frac{g_{\YM}(Q)g_{\YM}^\prime(Q)}{\sinh^2r}, \quad Q(0)=0,
}
with 
\EQ{
 \lim_{r\to\infty}Q(r)=\frac{2\lambda^2}{1+\lambda^2}
 }
 In Section~\ref{sec:pre}, we show that $  Q_{\YM, \la}$ are the only possible, finite energy  stationary solutions of \eqref{YM elliptic eq} and have energies
\ant{
\EE_\YM(Q_{\YM, \la},0)=\frac{4\lambda^4(3+\lambda^2)}{3(1+\lambda^2)^3} 
}
which is minimal in  $\EE_\la^\YM.$ 

Recall that the unique (up to scaling) nontrivial, finite energy stationary solution  to the Euclidean problem~\eqref{euc YM} is given by 
\ant{
Q_{ \YM, \euc}(r):=\frac{2r^2}{1+r^2}.
}
We have $\EE_\YM^\euc(Q_{\YM, \euc})=\frac{4}{3}$, which is minimal  amongst solutions starting at $\phi(t,0)=0$ and ending at $\phi(t,\infty)=2$.  We remark that   
\ant{
\EE_\YM(Q_{\YM, \la},0)  \to \EE_\YM^\euc(Q_{\YM, \euc})=\frac{4}{3}  \mas  \la\to\infty 
}
and $\EE_\YM(Q_{\YM, \la},0)  \to 0 $ as $\la \to 0$.

\vspace{\baselineskip}


The setup for investigating the asymptotic stability of $Q_{\YM, \la}$ under~\eqref{YM reduced eq} is similar to that of the $k$-equivariant harmonic map $\Qu$ presented in the previous subsection with $k=2$.  For a solution $\vec{\psi}\in\EE_\la^\YM$ to  \eqref{YM reduced eq} we define $\vec u(t, r)$ by 
 \ant{
\sinh^2r u(t,r):=\psi(t,r)-Q_{\YM, \la}(r).
}
Then $\vec{u}(t)$ solves
\ali{\label{u eq YM}
&u_{tt}-u_{rr}-5\coth ru_r-6u+W_\lambda(r)u=\NN_\YM(r,u)\\
&\vec{u}(0)=(u_0,u_1)
}
where the attractive potential $W_\la$ is given by 
\ant{
W_\lambda=\frac{6Q_{\YM, \la}(r)\left(Q_{\YM, \la}(r)-2\right)}{\sinh^2r}\leq0,\\
}
and the nonlinearity $\NN_{\YM}$ is 
\ant{
\NN_\YM(r,u)=-\frac{4}{\sinh^4r}\left(\frac{1}{2}\sinh^6ru^3+\frac{3}{2}(Q_{\YM, \la}-1)\sinh^4ru^4\right).
}
The underlying \emph{linear} equation is then, 
\EQ{ \label{v eq2}
v_{tt} - \Delta_{\Hp^{6}} v -6  v + W_{\la} v = 0
}
for radially symmetric functions $v(t):  \Hp^{6} \to \R$, and we define self-adjoint Schr\"odinger operators
\EQ{\label{H Wla}
&H_{0,2}:=-\p_{rr}- 5 \coth r \, \p_r -6,\\
&H_{W_{\la}} := -\p_{rr}- 5\coth r \, \p_r -6 +W_{\la}.
}
where we have used the notation $H_{0, 2}$ for the free operator in analogy with~\eqref{H Vla} -- here argain note that $\s(H_{0, 2}) = [1/4,  \infty)$ where $1/4$ is neither an eigenvalue nor a resonance. 

As for $k$-equivariant wave maps, the goal here will be to understand the spectrum $ \s(H_{W_{\la}})$. First, we prove that for maps $Q_{\YM, \la}$, whose image is contained  in a geodesically convex neighborhood of $Q_{\YM, \la}(0)$, the spectrum of the linearized operator $H_{W_\la}$ is purely absolutely continuous and is given by $[1/4, \infty)$. 
\begin{thm}\label{YM no eval}
For each  $0 \le \lambda\leq1$,   the spectrum of $H_{W_{\la}}$ is purely absolutely continuous and is given by  $$\sigma(H_{W_{\la}})=[1/4,\infty).$$  In particular, $H_{W_{\la}}$ has no negative spectrum, no eigenvalues in the spectral gap $[0,1/4),$ and the threshold $1/4$ is neither a resonance nor an eigenvalue.
\end{thm}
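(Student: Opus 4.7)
The plan is to follow the template used in Section~\ref{spectra} to prove Theorem~\ref{S2 no eval}(i), adapted to the Yang--Mills setting. The central tool is a Bogomol'nyi-type factorization of the Jacobi operator, leading to a supersymmetric comparison that yields the $1/4$ bound. First, the change of variables $\eta = \sinh^2 r \, u$ defines an isometry between radial $L^2(\Hp^6)$ and $L^2((0,\infty), \sinh r \, dr)$, conjugating $H_{W_\la}$ to the Jacobi operator
\[
J_\psi := -\p_r^2 - \coth r \, \p_r + \frac{4 - 12Q + 6Q^2}{\sinh^2 r}, \qquad Q := Q_{\YM,\la},
\]
obtained by linearizing~\eqref{YM reduced eq} about $Q_{\YM, \la}$. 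Hence $\s(H_{W_\la}) = \s(J_\psi)$, and the exponential decay of the potential gives $\s_{\mathrm{ess}}(J_\psi) = [1/4, \infty)$.

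Next, I exploit the Bogomol'nyi structure. From~\eqref{YM solitons} a direct computation verifies $Q_r = 2 g_\YM(Q)/\sinh r$. Setting
\[
A := \p_r - \frac{2(1-Q)}{\sinh r}, \qquad A^* := -\p_r - \coth r - \frac{2(1-Q)}{\sinh r}
\]
(with $A^*$ the formal adjoint of $A$ in $L^2(\sinh r \, dr)$), a calculation using the Bogomol'nyi relation gives $J_\psi = A^* A$, and $\ker A$ is spanned by $\eta_1 := 2 g_\YM(Q) = Q(2-Q)$. The supersymmetric partner is $AA^* = -\p_r^2 - \coth r \, \p_r + V_2$ with
\[
V_2 = \frac{5 - 4Q + 2Q^2 + 4(1-Q)\cosh r}{\sinh^2 r}.
\]
For $\la \le 1$, the image of $Q_{\YM, \la}$ lies in $[0,1)$, so $5 - 4Q + 2Q^2 = 2(Q-1)^2 + 3 \ge 3$ and $1 - Q \ge 0$, whence $V_2 \ge 3/\sinh^2 r > 0$ pointwise. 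Combined with $-\p_r^2 - \coth r \, \p_r = -\Delta_{\Hp^2} \ge 1/4$, this yields the strict form bound $\langle AA^* f, f\rangle > \tfrac{1}{4}\|f\|^2$ for every nonzero $f$.

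The supersymmetric identity $\s(A^*A) \setminus \{0\} = \s(AA^*) \setminus \{0\}$ then yields $\s(J_\psi) \subset \{0\} \cup [1/4, \infty)$. To exclude a point $\mu \in [0, 1/4]$ from the point spectrum of $J_\psi$: if $J_\psi \eta = \mu \eta$ with $\eta \in L^2(\sinh r \, dr)$, then $\mu \|\eta\|^2 = \|A\eta\|^2$; if $\mu = 0$, this forces $\eta \in \ker A$, hence $\eta \propto \eta_1$, but $\eta_1 \to 4\la^2/(1+\la^2)^2 \ne 0$ as $r \to \infty$, so $\eta_1 \notin L^2(\sinh r \, dr)$; if $\mu \in (0, 1/4]$, then $A\eta \ne 0$ is an $L^2$ eigenvector of $AA^*$ at $\mu \le 1/4$, contradicting the strict form bound. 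Hence $\s(J_\psi) = [1/4, \infty)$ with no gap eigenvalue and no embedded eigenvalue at $1/4$; via the intertwining, the same holds for $H_{W_\la}$.

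To exclude a threshold resonance at $1/4$ and establish absolute continuity: a resonance of $J_\psi$ at $1/4$---a non-$L^2$ solution $\eta_*$ of $J_\psi \eta_* = \tfrac{1}{4} \eta_*$ regular at $0$ and whose Liouville transform is bounded at $\infty$---is mapped by $A$ to an analogous resonance $\xi_* := A\eta_*$ of $AA^*$; in Liouville form this becomes a solution $H$ of $-H'' + V_{2,L} H = 0$ (with $V_{2,L} > 0$) satisfying $H \sim r^{7/2}$ at $0$ and $H \to c \ne 0$ as $r \to \infty$. Integration by parts against $H$---the boundary terms vanishing by the prescribed asymptotics and the exponential decay of $V_{2,L}$---gives $\int_0^\infty [(H')^2 + V_{2,L} H^2] \, dr = 0$, forcing $H \equiv 0$, a contradiction. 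Absolute continuity of the spectrum on $[1/4, \infty)$ then follows from the limiting absorption principle for the short-range potential, via the distorted Fourier transform and Weyl--Titchmarsh construction of \cite[Section~4]{LOS1}. The most delicate point is the resonance argument: a careful asymptotic expansion of $A\eta_*$ at $r = 0$ (where a cancellation improves the leading order from $r$ to $r^3$) and at $r = \infty$ is needed to justify the vanishing of the boundary terms in the integration by parts.
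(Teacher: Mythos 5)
Your proposal is correct, and it takes a genuinely different route from the paper's even though both arguments ultimately rest on the same Bogomol'nyi identity $Q'_{\YM,\la}=2 g_\YM(Q_{\YM,\la})/\sinh r$ and on the convexity hypothesis $g_\YM'(Q_{\YM,\la})=1-Q_{\YM,\la}\geq 0$ available for $\la\leq 1$. The paper conjugates $H_{W_\la}$ all the way to the half-line operator $\LL_{W_\la}=-\p_{rr}+\tfrac14+\tilde V$ on $L^2(dr)$ and runs a direct Sturm-type integration-by-parts: a putative nonnegative solution $\phi_\mu$ with $\mu^2\leq 1/4$ is paired against the explicit comparison function $f:=g_\YM(Q_{\YM,\la})$, which solves $f''=Uf$ with $\tilde V-U=\sinh^{-2}r\,(2\cosh r\,(1-Q_{\YM,\la})-\tfrac14)\geq\tfrac34\sinh^{-2}r$, and the contradiction is read off from the positive lower bound on $\int\phi_\mu f\,\sinh^{-2}r\,dr$ against decaying boundary terms. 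You instead stop at the ``$2$d Jacobi picture'' on $L^2(\sinh r\,dr)$, factorize $J_\psi=A^*A$ with $A=\p_r-2(1-Q)/\sinh r$ and $\ker A$ spanned by $2g_\YM(Q)=Q(2-Q)$, and pass to the supersymmetric partner $AA^*=-\Delta_{\Hp^2}+V_2$ with $V_2=\sinh^{-2}r\,(2(Q-1)^2+3+4(1-Q)\cosh r)>0$; I verified these computations and the resulting strict form bound $AA^*>1/4$ are correct, and your asymptotic claims for $A\eta_*$ (cancellation from $O(r)$ to $O(r^3)$ at $r=0$, limit $-\tfrac12 b\neq 0$ at infinity) also check out. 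Your version makes the hidden supersymmetric structure explicit and is conceptually cleaner; the paper's version is more elementary in that it never has to verify the domain hypotheses underlying $\sigma(A^*A)\setminus\{0\}=\sigma(AA^*)\setminus\{0\}$, nor that the self-adjoint realizations of $A^*A$ and $AA^*$ coincide with the limit-point Sturm--Liouville operators $J_\psi$ and $-\Delta_{\Hp^2}+V_2$ — those checks are routine but should be acknowledged. Two smaller omissions: (1) you never explicitly rule out embedded eigenvalues with $\mu^2>1/4$, which the paper handles in Proposition~\ref{no neg spec}$(ii)$ by noting that no member of the Jost system $\{e^{\pm i\sqrt{\mu^2-1/4}\,r}\}$ decays; your appeal to short-range scattering theory covers it, but you should say so. (2) In the resonance step you need $A\eta_*\not\equiv 0$, which follows because $\eta_*$, having a finite nonzero Liouville-transformed limit $b$ at infinity, cannot be a multiple of the zero mode $2g_\YM(Q)$, whose Liouville transform grows like $e^{r/2}$; it is worth stating this explicitly.
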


\begin{rem}\label{rem:ymstab} 
As in the case of wave maps, one can use the spectral information in Theorem~\ref{YM no eval} to deduce Strichartz estimates for~\eqref{v eq2}; see Remark~\ref{rem:stab}. As a consequence one can prove  the following stability result via the usual arguments.
 For every $\la \in [0, 1]$ the harmonic map $Q_{\YM, \la}$ is asymptotically stable in $\EE_{\la}^\YM$. In particular, for each $\la \in [0, 1]$ there exists a $\de_0>0$ so that for every $(\psi_0, \psi_1) \in \EE_{\la}^\YM$ with
\ant{
 \| (\psi_0, \psi_1) - (Q_{\YM, \la}, 0)\|_{\HH_0} < \de_0
}
there exists a unique, global solution $\vec \psi(t) \in \EE_{\la}^\YM$ to~\eqref{YM reduced eq}. Moreover, $\vec \psi(t)$  scatters to $(Q_{\YM, \la}, 0)$  as $t \to \pm \infty$.
\end{rem}

\begin{rem}
As in Remark~\ref{rem:open} we note that  Theorem \ref{YM no eval} and Remark~\ref{rem:ymstab} can be extended to include $\la$ with  $\la<1+\epsilon$ for a sufficiently small $\epsilon>0.$
\end{rem}

For $\la$ large the situation changes dramatically, as we have the following analog of Theorem \ref{e val}, for the Yang-Mills problem. 

\begin{thm}\label{YM e val} There exists $\Lambda_1>0$ so that for all $\la > \La_1$, the Schr\"odinger operator $H_{W_{\la}}$ has a unique simple eigenvalue $\mu_{\lmb}^{2}$ in the spectral gap $(0, 1/4)$. That is, there exists a unique number $\mu_{\lmb}^{2} \in (0, 1/4)$ and a unique nonzero solution $\fy_\la \in L^2(\Hp^6)$ to
\EQ{
H_{W_{\la}} \fy_{\la} =  \mu_{\lmb}^{2} \fy_{\la}.
}
The operator $H_{W_{\la}}$ has no threshold resonance at $1/4$ in the sense of Definition~\ref{res def} below. Moreover, the eigenvalue $\mu_{\lmb}^{2}$ migrates to $0$ as $\lmb \to \infty$, i.e.,
\EQ{ \label{mu to 0 ym}
\mu_{\lmb}^{2} \to 0 \mas \la \to \infty.
}
\end{thm}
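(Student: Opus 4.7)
The plan is to adapt the argument for Theorem~\ref{e val} with the role of the Euclidean $k\ge 2$ harmonic map played by $Q_{\YM,\euc}(r)=2r^2/(1+r^2)$. First, I would pass to a 1D Schr\"odinger operator by removing the hyperbolic volume weight: the substitution $u=(\sinh r)^{-5/2}\phi$ for radial $u$ on $\Hp^6$ transforms $H_{W_\la}$ into
\[
\wht{H}_{W_\la} = -\p_r^2 + \frac{15/4}{\sinh^2 r} + \tfrac14 + W_\la
\]
on $L^2((0,\infty),\ud r)$ with Dirichlet condition at $r=0$, so that a gap eigenvalue $\mu_\la^2\in(0,1/4)$ of $H_{W_\la}$ corresponds to a negative eigenvalue of $\wht L_\la:=\wht H_{W_\la}-\tfrac14$ lying in $(-1/4,0)$. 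The fact that $Q_{\YM,\la}$ minimizes $\EE_\YM$ on $\EE_\la^\YM$ gives $H_{W_\la}\ge 0$, and the formal zero mode $\p_\la Q_{\YM,\la}/\sinh^2 r$ fails to lie in $L^2(\Hp^6)$ (it tends to a nonzero constant multiple of $\sinh^{-2}r$ as $r\to\infty$), so any eigenvalue below the essential spectrum $[1/4,\infty)$ lies strictly in $(0,1/4)$.

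For existence, the key input is that the Euclidean linearization of~\eqref{euc YM} about $Q_{\YM,\euc}$ admits a genuine $L^2(\R^6)$ threshold eigenfunction: after the substitution $r^2 u=\phi-Q_{\YM,\euc}$, the scaling zero mode is
\[
\zeta_\euc(r) = \frac{\p_\mu Q_{\YM,\euc,\mu}(r)\big|_{\mu=1}}{r^2} = \frac{4}{(1+r^2)^2},
\]
which lies in $L^2(\R^6)$. This is the Yang-Mills analog of the $k\ge 2$ wave maps situation, where the Euclidean linearized operator has a threshold \emph{eigenvalue} rather than a resonance. Using $\zeta_\euc$ rescaled to the hyperbolic geometry, e.g.\ $\phi_\la(r):=(\sinh r)^{5/2}\chi(r)\zeta_\euc(\la\tanh(r/2))$ with a smooth cutoff $\chi$ near $r=0$, I would compute $\langle\wht L_\la\phi_\la,\phi_\la\rangle$ by writing $\wht L_\la=\wht L_\euc+(\wht L_\la-\wht L_\euc)$ and using $\wht L_\euc\zeta_\euc=0$ to eliminate the leading term. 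The main technical step, and the principal obstacle, is to show that the remaining error -- arising from $\tfrac14$ versus $0$, $\sinh r$ versus $r$, $W_\la$ versus $W_\euc$, and the cutoff boundary contributions -- sums to a strictly negative quantity for $\la$ sufficiently large. This plays the role of the sign-determining inequality in the $k\ge 2$ wave maps proof (cf.\ the remark following Theorem~\ref{e val}); unlike for $k=1$, the fact that $\zeta_\euc\in L^2$ provides enough decay that no delicate logarithmic cancellation is needed.

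Given existence, simplicity of $\mu_\la^2$ follows from standard Sturm-Liouville theory: the ground state of a 1D Schr\"odinger operator on a half-line with Dirichlet condition is simple. Uniqueness within the gap is a Sturm zero-counting argument: a second eigenfunction in $(-1/4,0)$ would need exactly one interior zero, which one rules out by comparison with the Euclidean ODE, using the attractive sign of $W_\la$ and the one-parameter structure of the family $\{Q_{\YM,\la}\}$. The absence of a threshold resonance at $1/4$ is verified by Wronskian analysis at $E=0$ for $\wht L_\la$, comparing the regular solution at the origin with the Jost solution at $+\infty$; continuity of this Wronskian in $\la$ shows that the values of $\la$ where it vanishes form at most a discrete set (cf.\ Remark~\ref{rem:res}), so $\La_1$ can be chosen to avoid them.

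Finally, for the migration $\mu_\la^2\to 0$ as $\la\to\infty$, the same rescaled test function together with a careful accounting of subleading terms yields an upper bound of the form $\mu_\la^2\le 1/4 - c_\la$ with $c_\la\to 1/4$ as $\la\to\infty$, i.e., $\mu_\la^2\to 0$. Conceptually, under the rescaling $r\mapsto\la^{-1}r$ the hyperbolic operator $H_{W_\la}$ converges locally to the Euclidean operator $H_\euc$, whose bottom of spectrum is $0$, and the minimax principle propagates this convergence to $\mu_\la^2\to 0$ in the hyperbolic problem.
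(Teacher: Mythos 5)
Your proposed variational argument — building a test function from the rescaled Euclidean threshold eigenfunction $\zeta_\euc(r)=4/(1+r^2)^2$ and showing the quadratic form $\langle\wht L_\la\phi_\la,\phi_\la\rangle$ is negative — is a genuinely different route from the one the paper takes. The paper instead works with the explicit positive zero mode $\eta_0^\la(r)=\sinh^{1/2}r\,\p_\la Q_{\YM,\la}(r)$ of $\LL_{W_\la}$ (defined in~\eqref{edef}), passes to the renormalized variable $\rho=\la r/2$, and writes the regular solution at threshold energy $\mu^2=1/4$ as $f(\rho)=\phi(\rho)/\eta(\rho)$. This converts the eigenvalue problem into a Volterra-type integral identity for $f$, from which one derives (after showing $f\ge 1/2$ on $[0,\rho_0(\la)]$ and extracting an explicit lower bound on $|f'(\rho_0)|\gec\la^{-5}/\eta^2(\rho_0)$) the estimate $\lim_{\rho\to\infty}f(\rho)\le 1 - c\la^2(1-\la^{-4})$, which is negative for large $\la$; by Sturm oscillation this forces the regular solution to change sign and therefore an eigenvalue to exist strictly below $1/4$. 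The renormalization by $\eta_0^\la$ is chosen precisely because it makes every estimate explicit and avoids having to carry out the $(\sinh r$ vs.\ $r)$, potential, cutoff, and $1/4$-shift comparisons against the Euclidean operator.

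This points to the real gap in your proposal: the step you call ``the principal obstacle'' — showing that the errors from $\wht L_\la-\wht L_\euc$ plus the cutoff terms sum to something strictly negative — is exactly the quantitative heart of the theorem, and you have not supplied a mechanism for controlling it. It is not obvious a priori that the error has a definite sign; the terms $15/4(\sinh^{-2}r-r^{-2})$, $W_\la - W_\euc$, and the cutoff boundary contributions have different signs and sizes, and you would need a matched-asymptotics computation that the paper's renormalization sidesteps. As written, your argument asserts the conclusion rather than establishing it.

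A second, more structural issue: your exclusion of a threshold resonance via ``the Wronskian is continuous in $\la$, so the set where it vanishes is discrete, so choose $\La_1$ to avoid it'' does not prove the claim for \emph{all} $\la>\La_1$ as the theorem requires — a discrete exceptional set could accumulate or recur arbitrarily far out. The paper gets the resonance exclusion for free from the same Sturm comparison: once the regular solution at energy $1/4$ is shown to change sign for all large $\la$, it cannot be the sign-definite bounded solution that a ground-state threshold resonance must be. Your Sturm zero-counting sketch for uniqueness is along the lines of the paper's (comparison of a putative eigenfunction with $e^{-mr}$ via a Wronskian and a pointwise sign condition on the effective potential for $r\ge C/\la$), but again no concrete inequality is supplied. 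Finally your migration argument (minimax plus local convergence of the rescaled operator) is plausible but is also not carried out; the paper instead re-runs the same integral-equation sign-change argument with the eigenvalue parameter $4\bar\mu^2/\la^2$ and $\la$ large depending on $\bar\mu$, which is an explicit estimate rather than an appeal to operator convergence.
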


\begin{rem}
As in Remark~\ref{rem:res}, a consequence of the proofs of Theorem~\ref{YM no eval} and~\ref{YM e val}, is that if we define 
\EQ{
&\lambda_{\sup}:=\sup\{ \la \mid H_{W_{\ti \lambda}} \, \, \textrm{has no e-vals and no threshold resonance} \, \, \forall \, \ti \la < \la\}\\
&\Lambda_{\inf}:=\inf\{ \la \mid H_{W_{\ti \lambda}} \, \, \textrm{has a gap e-val}\, \,  \mu^2_{\ti{\la}} \in (0, 1/4) \, \, \forall \,  \ti \la > \la\}
}
then both $H_{W_{\la_{\sup}}}$ and $H_{W_{\Lambda_{\inf}}}$ have threshold resonances; again we refer the reader to~\cite[Proposition~$3.6$]{LOS1} for more details. 
\end{rem}
\subsection{Brief outline of the paper}
In Section~\ref{sec:pre} we establish  various facts about the harmonic maps $Q_{\la,k},$ and $Q_{\YM, \la}$ defined above.  We also justify  the passage to equations on $\R \times \Hp^{2k+2}$ outlined in the introduction.

In Section~\ref{spectra} we begin the study of the spectra of the linearized operators $H_{V_{\la,k}}$ and $H_{W_{\la}}$ and prove Theorem~\ref{S2 no eval} and Theorem~\ref{YM no eval}. In Section~\ref{s:eval} we prove Theorem~\ref{e val} and Theorem~\ref{YM e val}. 

\section{Preliminaries}\label{sec:pre} In this section we establish the existence and uniqueness of the harmonic maps $Q_{\la,k}$ and $Q_{\YM, \la}$ described in the introduction. We give simple geometric descriptions of these maps and prove several properties that we will need in the ensuing arguments. We also prove some additional preliminary facts including an equivalence between the $2d$ and $(2k+2)$ dimensional Cauchy problems described in the introduction.

\subsection{Basic properties of the stationary solutions $Q_{\la,k}$ and $Q_{\YM, \la}$}\label{S2 hm} Here we prove various facts about the harmonic maps $Q_{\la,k}$ and $Q_{\YM, \la}$. For convenience we collect these facts into two propositions.
\begin{prop}\label{hm prop}
For every $0 \le \al< \pi$ there exists a unique, finite energy stationary solution to~\eqref{wm}, i.e.,  a harmonic map, $(Q_{\la,k}, 0) \in \E_{\la,k}$ which solves~\eqref{hm}, where
\EQ{\label{Q def}
&Q_{\la,k}(r) = 2\arctan(\la^k \tanh^k(r/2))\\
&\la \in [0, \infty), \, \, \al = \al(\la) = 2 \arctan(\la^k) = \lim_{r \to \infty} Q_{\la,k}(r).
}
Moreover, $(Q_{\la,k}, 0) \in \E_{\la,k}$ has energy
\EQ{\label{hm en}
\E( Q_{\la,k}, 0) = 2 k\frac{\la^2}{1+\la^2},
}
which is minimal in $\E_{\la,k}$. Finally, the $Q_{\la,k}$ with $\la\in[0, \infty)$ are the only finite energy stationary solutions to~\eqref{wm}.
\end{prop}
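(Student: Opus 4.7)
The plan is to use a Bogomol'nyi-type factorization to obtain existence, the explicit formula, the minimization property, and the energy value in a single stroke, and then classify \emph{all} finite energy stationary solutions via a local ODE analysis at the singular point $r = 0$.

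For any finite energy $Q$ with $Q(0) = 0$ and endpoint $\al := \lim_{r \to \infty} Q(r) \in [0, \pi)$, completing the square in \eqref{con energy} yields
\[
\E(Q, 0) = \frac{1}{2}\int_0^\infty \bb(Q_r - k \frac{\sin Q}{\sinh r}\bb)^2 \sinh r \, \ud r + k\int_0^\infty Q_r \sin Q \, \ud r,
\]
and the last integral telescopes to $-k[\cos Q]_0^\infty = k(1 - \cos \al)$. This gives the Bogomol'nyi lower bound $\E(Q, 0) \ge k(1 - \cos \al)$, with equality iff the first-order equation $Q_r = k \sin Q / \sinh r$ holds. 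Separation of variables yields $\ud Q / \sin Q = k \, \ud r/\sinh r$, which integrates to $\ln \tan(Q/2) = k \ln \tanh(r/2) + C$; taking $e^{C} = \la^{k}$ gives the claimed formula $Q_{\la, k}(r) = 2 \arctan(\la^k \tanh^k(r/2))$. One checks $Q_{\la, k}(0) = 0$, $\lim_{r \to \infty} Q_{\la, k}(r) = 2 \arctan(\la^k) = \al$, and the map $\la \mapsto \al$ is a bijection $[0, \infty) \to [0, \pi)$. A direct computation confirms that any BPS solution satisfies the second-order equation \eqref{hm}. The energy identity
\[
\E(Q_{\la,k}, 0) = k\bb(1 - \cos(2\arctan \la^k)\bb) = \frac{2k \la^{2k}}{1 + \la^{2k}}
\]
follows from $\cos(2\arctan x) = (1 - x^2)/(1+x^2)$, and the Bogomol'nyi bound simultaneously shows that $(Q_{\la, k}, 0)$ minimizes $\E$ in $\E_{\la, k}$.

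For uniqueness among all finite energy stationary solutions (not merely minimizers), I pass to an ODE analysis at $r = 0$. Linearizing \eqref{hm} about $Q = 0$ produces $Q_{rr} + Q_r/r - k^2 Q/r^2 = 0$ to leading order, with fundamental solutions $r^{\pm k}$; the finite-energy requirement on $\int_0^\eps k^2 \sin^2 Q/\sinh r \, \ud r$ forbids the singular branch, so every finite energy stationary solution admits a smooth expansion $Q(r) = a r^k + O(r^{k+2})$ near the origin. Substituting $Q = r^k f(r)$ converts \eqref{hm} into a regular ODE for $f$ at $r = 0$, and Picard iteration yields a unique smooth solution for each $a \in \R$. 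Since $Q_{\la, k}(r) = (\la^k/2^{k-1}) r^k + O(r^{k+2})$, the family $\{Q_{\la, k} : \la \in [0, \infty)\}$ realizes every nonnegative leading coefficient; global ODE uniqueness (combined with the symmetry $Q \mapsto -Q$ of \eqref{hm} to handle the sign of $a$) then forces every finite energy stationary solution with $\al \in [0, \pi)$ to coincide with some $Q_{\la, k}$.

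The main technical point is the last step: carefully handling the regular-singular behavior of \eqref{hm} at $r = 0$ to show the smooth-solution space is exactly one-dimensional, and verifying that the BPS family fills it out. The Bogomol'nyi factorization is essentially algebraic and, as is standard for equivariant sigma models, delivers existence, the explicit formula, minimality, and the energy value in a single computation.
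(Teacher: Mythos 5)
Your Bogomol'nyi factorization is exactly the paper's argument for existence, minimality, and the energy identity, but for the uniqueness of stationary solutions you take a genuinely different route. The paper substitutes $s := \log\tanh(r/2)$, $\varphi(s) := Q(r)$, reducing~\eqref{hm} to the autonomous pendulum ODE $\varphi_{ss} = \tfrac{k^2}{2}\sin 2\varphi$, and then uses the conserved quantity $\varphi_s^2 - k^2\sin^2\varphi$ and a phase portrait analysis to conclude that any nontrivial finite-energy solution has $\varphi(0) = \al \in (0,\pi)$ and, together with the boundary condition $\varphi(-\infty)=0$, that $Q$ is a $Q_{\la,k}$. You instead do a local Frobenius analysis at the regular singular point $r=0$ to show that the space of finite-energy local solutions is one-dimensional, parameterized by the leading coefficient $a$ in $Q(r)=a r^k + O(r^{k+2})$, then note that the explicit family $\set{Q_{\la,k}}_{\la \ge 0}$ realizes every $a \ge 0$ and invoke ODE uniqueness on $(0,\infty)$. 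Both arguments are correct. The paper's pendulum reduction is more global and immediately reveals the geometric picture (why $\al=\pi$ is the barrier), whereas your approach is more elementary and local; it earns non-existence for $\al\ge\pi$ as a corollary of the explicit formula rather than from the phase portrait.

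Two small points. First, after the substitution $Q = r^k f$, the resulting ODE for $f$ is still regular-\emph{singular} at $r=0$ (schematically $f'' + \tfrac{2k+1}{r} f' + (\text{regular})f = 0$), so you cannot apply Picard iteration directly; what you really use is the standard Frobenius/Volterra argument, which does deliver a unique analytic solution for each prescribed $f(0)=a$, and this is the same structure exploited elsewhere in the paper (e.g.\ Lemma~\ref{LL ev 0}). The conclusion is right; only the justification is slightly imprecisely phrased. Second, your energy value $\E(Q_{\la,k},0) = 2k\la^{2k}/(1+\la^{2k})$ is the correct one (consistent with $k(1-\cos\al)$, $\al = 2\arctan(\la^k)$); the formula as printed in~\eqref{hm en} and in the paper's proof of Proposition~\ref{hm prop} appears to contain a typographical slip ($\la^2$ and $\arctan(\la)$ in place of $\la^{2k}$ and $\arctan(\la^k)$), matching~\eqref{hm en} only when $k=1$.
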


\begin{proof}
We are seeking to classify all stationary finite energy solutions to~\eqref{wm}. Recall from the introduction that  any finite energy harmonic map $Q$ must have $Q(0) = 0$ and $Q(\infty) = \al \in [0 , \infty)$. Thus we would like to find all solutions $Q$ to
\EQ{\label{hm bd}
&Q_{rr} + \coth r \, Q_r =k^2 \frac{\sin2 Q}{2\sinh^2 r},\\
&Q(0) = 0, \, \, \lim_{r \to \infty}Q(r) = \al \in [0, \infty).
}
One can check directly that $Q_{\la,k}$, as defined in~\eqref{Q def}, satisfies~\eqref{hm bd} with~$\al = 2 \arctan(\la^k)$. One can also directly compute the energy to verify~\eqref{hm en}.

To prove the remaining statements in Proposition~\ref{hm prop} we begin by giving a simple geometric interpretation of $Q_{\la,k}$. Recall that the stereographic projection of $\Hp^2$ onto the Poincar\'e disc, $\D$, viewed as a subset of $\R^2$,  is given by 
\begin{align*}
( \sinh r \, \cos \omega, \sinh r\, \sin \om, \cosh r ) \mapsto ( \tanh(r/2) \cos \omega,  \tanh(r/2) \, \sin \om) .
\end{align*}
Next, rescale the disc by $\la \in [0, \infty)$ via
\begin{align*}
( \tanh(r/2) \cos \om,  \tanh(r/2) \, \sin \om) \mapsto ( \la\tanh(r/2) \cos \om,  \la \tanh(r/2) \, \sin \om).
\end{align*}
Since we are interested in $k$-equivariant harmonic maps, we note that in polar coordinates the degree $k$ map $z\mapsto z^k$ from $\C$ to $\C$ is given by 
\ant{
(\rho \cos \om, \rho \sin \om) \mapsto (\rho^k\cos (k\omega),\rho^k\sin(k\omega)).
}
Finally, recall that the inverse of stereographic projection, $\R^2 \to \Sp^2-\{\textrm{south pole}\}$ is the map
\begin{align*}
(\rho \cos \omega, \rho \sin \omega) \mapsto ( \sin(2\arctan \rho) \, \cos \omega, \sin(2\arctan\rho) \,  \sin \omega, \cos(2 \arctan\rho)).
\end{align*}
Then since solutions to~\eqref{wm} or~\eqref{hm} are expressed in terms of the polar angle on $\Sp^2$,  we see that  $Q_{\la,k}$ is simply the composition of the above four maps. 

This geometric interpretation motivates the following change of variables in~\eqref{hm bd}. If we define
\ant{
s:= \log( \tanh(r/2)), \, \, \fy(s):= Q(r),
}
then~\eqref{hm bd} reduces to the following equation for $\fy$,
\EQ{\label{ode}
 &\fy_{ss} =  \frac{k^2}{2} \sin 2 \fy,\\
 & \fy(- \infty) = 0, \, \, \fy(0) = \al.
 }
Multiplying the first line in~\eqref{ode} by $\fy'$ and integrating from $s_1$ to $s_2$ yields the energy identity
\EQ{\label{ode en id}
\fy_s^2(s_2) - \fy_s^2(s_1)= k^2\left(\sin^2(\fy(s_2))- \sin^2(\fy(s_1))\right).
}
A standard analysis of the phase portrait  in $(\fy, \fy')$ coordinates together with~\eqref{ode en id} shows that any nontrivial solution must satisfy $0< \fy(0)< \pi$. Thus there are no nontrivial solutions for endpoints $\al \ge \pi$, i.e,  $Q_{\la, k}$ are the only solutions to~\eqref{hm bd}. 

It remains to show that $(Q_{\la,k}, 0)$ minimizes the energy in $\E_{\la,k}$. This is a direct consequence of the following ``Bogomol'nyi factorization": Let $\vec \psi(t) = ( \psi(t), \psi_t(t)) \in \E_{\la,k}$. Then we have
 \ant{
 &\E( \vec \psi) = \frac{1}{2} \int_0^{\infty} \psi_t^2 \, \sinh r \, dr + \frac{1}{2}\int_{0}^{\infty} \left( \psi_r - k\frac{\sin \psi}{ \sinh r} \right)^2 \, \sinh r\, dr + k\int_{0}^{\infty}  \sin \psi \psi_r \, dr\\
 & = \frac{1}{2} \int_0^{\infty} \psi_t^2 \, \sinh r \, dr + \frac{1}{2}\int_{0}^{\infty} \left( \psi_r - k\frac{\sin \psi}{ \sinh r} \right)^2 \, \sinh r\, dr\\&\qquad\qquad\qquad\qquad\qquad\qquad\qquad\qquad\qquad\qquad\qquad + k(\cos \psi(t, 0) - \cos \psi(t,  \infty))\\
 & = \frac{1}{2} \int_0^{\infty} \psi_t^2 \, \sinh r \, dr + \frac{1}{2}\int_{0}^{\infty} \left( \psi_r - k\frac{\sin \psi}{ \sinh r} \right)^2 \, \sinh r\, dr\\&\qquad\qquad\qquad\qquad\qquad\qquad\qquad\qquad\qquad\qquad\qquad + k(1- \cos(2 \arctan( \la))).
 }
 For the solution $\vec \psi(t) = (Q_{\la,k}, 0)$ the first two integrals -- which we note are always  non-negative -- vanish identically, which proves that $(Q_{\la,k}, 0)$ uniquely minimizes the energy in $\E_{\la,k}$. Finally, a simple  calculation yields
 \ant{
 \E(Q_{\la,k}, 0) =k(1- \cos(2 \arctan( \la))) = 2k \frac{\la^2}{1+ \la^2}
 }
 and this completes the proof.
\end{proof}

The analogous result for 
$Q_{\YM, \la}$ can be proved in a nearly identical fashion. We omit the details.  


\begin{prop}\label{hm prop ym}
For every $0 \le \al< 2$ there exists a unique, finite energy stationary solution to~\eqref{YM reduced eq}, i.e.,  a map, $(Q_{\YM, \la}, 0) \in \EE_{\la}^\YM$ which solves~\eqref{YM elliptic eq}, where
\EQ{\label{Y def}
&Q_{\YM, \la}(r) = \frac{2\la^2\tanh^2(r/2)}{1+\la^2\tanh^2(r/2)}\\
&\la \in [0, \infty), \, \, \al = \al(\la) = \frac{2\lambda^2}{1+\la^2} = \lim_{r \to \infty} Q_{\YM, \la}(r).
}
Moreover, $(Q_{\YM, \la}, 0) \in \EE_{\la}^\YM$ has energy
\EQ{\label{hm en ym}
\EE_\YM( Q_{\YM, \la}, 0) = \frac{4\lambda^4(3+\lambda^2)}{3(1+\lambda^2)^3},
}
which is minimal in $\E_{\la}^\YM$. Finally, the $Q_{\YM, \la}$ with $\la\in[0, \infty)$ are the only finite energy stationary solutions to~\eqref{YM reduced eq}.
\end{prop}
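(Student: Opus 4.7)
The plan is to follow the proof of Proposition~\ref{hm prop} line by line, with the wave-map data $g(\psi) = \sin\psi$ and equivariance index $k$ replaced by $g_\YM(\psi) = \psi - \psi^2/2$ and $k = 2$. First I would verify by direct substitution that $Q_{\YM, \la}$ as defined in~\eqref{Y def} solves~\eqref{YM elliptic eq} with the stated boundary values $Q(0) = 0$ and $\lim_{r\to\infty} Q(r) = 2\la^2/(1+\la^2)$, and compute the energy~\eqref{hm en ym} directly; both steps are routine algebra.

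To rule out any other finite-energy stationary solution, I would reuse the change of variables
\[
s := \log \tanh(r/2), \qquad \fy(s) := Q(r),
\]
which satisfies $ds/dr = 1/\sinh r$ and recasts the left-hand side of~\eqref{YM elliptic eq} as $\fy_{ss}/\sinh^2 r$. The equation reduces to the autonomous ODE
\[
\fy_{ss} = 4 g_\YM(\fy) g_\YM'(\fy), \qquad \fy(-\infty) = 0, \qquad \fy(0) = \al.
\]
Multiplying by $\fy_s$ gives the first integral $\fy_s^2 = 4 g_\YM(\fy)^2 + C$, and finite energy of $Q$ forces $C = 0$. Hence $\fy_s = \pm 2 g_\YM(\fy) = \pm \fy(2-\fy)$. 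A standard phase-plane analysis in $(\fy, \fy_s)$ shows that any orbit emanating from $(0,0)$ is confined to the strip $0 < \fy < 2$, that $\al \in (0, 2)$ uniquely selects such an orbit, and that no solution exists for $\al \ge 2$. Solving $\fy_s = \fy(2-\fy)$ explicitly gives $\fy(s) = 2\la^2 e^{2s}/(1 + \la^2 e^{2s})$ with $\la^2 = \al/(2-\al)$, and substituting $e^{2s} = \tanh^2(r/2)$ recovers~\eqref{Y def}.

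For the minimizing property I would carry out the analog of the Bogomol'nyi factorization from the proof of Proposition~\ref{hm prop}. For any $\vec\psi \in \EE_\la^\YM$, completing the square yields
\[
\EE_\YM(\vec\psi) = \frac{1}{2} \int_0^\infty \psi_t^2 \sinh r\, dr + \frac{1}{2} \int_0^\infty \bb( \psi_r - \frac{2 g_\YM(\psi)}{\sinh r} \bb)^2 \sinh r \, dr + 2 \int_0^\infty g_\YM(\psi) \psi_r \, dr.
\]
The cross term is $2 [G(\psi)]_0^\infty$ where $G(v) := v^2/2 - v^3/6$ is an antiderivative of $g_\YM$, and this boundary contribution equals $2 G(2\la^2/(1+\la^2)) = 4\la^4(3+\la^2)/(3(1+\la^2)^3)$, i.e.\ the asserted minimum energy~\eqref{hm en ym}. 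The first two terms on the right are manifestly non-negative and vanish simultaneously precisely when $\psi_t \equiv 0$ and $\psi_r = 2 g_\YM(\psi)/\sinh r$; the latter first-order ODE with $\psi(0) = 0$ is solved uniquely by $Q_{\YM, \la}$, giving both the minimality and uniqueness of the minimizer.

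The only place where some genuine care is required is the phase-plane analysis: since $g_\YM$ vanishes at both $\fy = 0$ and $\fy = 2$, one must verify that the heteroclinic orbit from $0$ to $2$ takes infinite $s$-time (so $\al = 2$ is not attained) and that no orbit crosses $\fy = 2$ (which would force $\fy_s^2 < 0$), ruling out $\al > 2$. Once these qualitative facts are in hand, the remaining bookkeeping is mechanical.
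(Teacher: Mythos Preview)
Your proposal is correct and follows precisely the approach the paper intends: the paper omits the proof of this proposition entirely, stating only that it ``can be proved in a nearly identical fashion'' to Proposition~\ref{hm prop}. Your adaptation of the change of variables $s = \log\tanh(r/2)$, the first-integral/phase-plane argument, and the Bogomol'nyi factorization with $g_\YM$ in place of $\sin$ and $k=2$ is exactly that transcription, and your explicit computation of the boundary term $2G(2\la^2/(1+\la^2))$ correctly recovers~\eqref{hm en ym}.
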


\subsection{Reduction to equations on $\R \times \Hp^{2k+2}$}\label{4d}
Next, we provide more details related to the reductions to the Cauchy problems~\eqref{u eq} 
and~\eqref{u eq YM}  outlined in the introduction.

First, we prove an $L^{\infty}$ bound on solutions to~\eqref{wm} 
and \eqref{YM reduced eq} in terms of their energy. As the proof is the same in both cases we shorten the exposition by considering solutions to~\eqref{eq wm}, namely
\EQ{ \label{wm g}
&\psi_{tt} - \psi_{rr} - \coth r \, \psi_r + k^2\frac{ g(\psi)g'(\psi)}{\sinh^2 r} = 0,\\
&E( \vec \psi) :=  \frac{1}{2}\int_0^\infty  \left( \psi_t^2 + \psi^2_r + k^2\frac{g^2( \psi)}{\sinh^2 r} \right) \sinh r \, dr.
}
where in the cases under consideration we have $g_{\Sp^2}(\psi) = \sin \psi$, $E= \E$ for maps into $\Sp^2$, and $g_{\YM}(\psi)= \,\psi - \frac{\psi^2}{2}$, $E=\EE_\YM$, $k=2$ for the Yang-Mills equation.
\begin{lem} Let $\vec \psi(t)$ be a finite energy solution to~\eqref{wm g} defined on the interval $t \in I$ with $\psi(t, 0) = 0$ for every $t \in I$. Then there exists a function $C$ with $C( \rho) \to 0$ as $\rho \to 0$ so that
\EQ{\label{l inf bound}
\sup_{t \in I} \| \psi(t) \|_{L^{\infty}} \le C( E( \vec \psi)).
}
\end{lem}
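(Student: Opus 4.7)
The plan is to control $\psi(t,r)$ by a strictly monotone antiderivative of $|g|$. Define $\Phi:\bbR\to\bbR$ by
\[
\Phi(\psi):=\int_0^\psi |g(s)|\,ds.
\]
In both cases of interest ($g(\psi)=\sin\psi$ and $g(\psi)=\psi-\psi^2/2$) the zeros of $g$ are isolated and $|g(\psi)|\to\infty$ as $|\psi|\to\infty$, so $\Phi$ is a strictly increasing $C^1$ homeomorphism of $\bbR$ with $\Phi(0)=0$. Its inverse $F:=\Phi^{-1}$ is therefore continuous with $F(0)=0$.

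Using $\psi(t,0)=0$ and the fundamental theorem of calculus,
\[
\Phi(\psi(t,r))=\int_0^r |g(\psi(t,r'))|\,\psi_{r'}(t,r')\,dr'.
\]
Writing the integrand as $\tfrac{|g(\psi)|}{\sinh^{1/2} r'}\cdot\bigl(|\psi_{r'}|\sinh^{1/2} r'\bigr)$ and applying Cauchy--Schwarz,
\[
|\Phi(\psi(t,r))|\le\Bigl(\int_0^\infty \tfrac{g^2(\psi)}{\sinh^2 r'}\sinh r'\,dr'\Bigr)^{1/2}\Bigl(\int_0^\infty \psi_{r'}^2\sinh r'\,dr'\Bigr)^{1/2}\le\tfrac{2\,E(\vec\psi)}{k},
\]
where the last inequality uses $k^2\int \tfrac{g^2(\psi)}{\sinh^2 r}\sinh r\,dr\le 2E(\vec\psi)$ and $\int \psi_r^2\sinh r\,dr\le 2E(\vec\psi)$ from the definition of the conserved energy. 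Applying the inverse $F$ yields
\[
|\psi(t,r)|\le\max\bigl(|F(2E(\vec\psi)/k)|,\,|F(-2E(\vec\psi)/k)|\bigr)=:C(E(\vec\psi)),
\]
uniformly in $r\ge 0$ and $t\in I$, and $C(\rho)\to 0$ as $\rho\to 0$ by continuity of $F$ at zero, giving \eqref{l inf bound}.

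The main conceptual obstacle is choosing the right antiderivative. The naive candidate $G(\psi):=\int_0^\psi g(s)\,ds$, which equals $1-\cos\psi$ in the $\Sp^2$ case, yields only an oscillatory (and even bounded) quantity that cannot pin down $\psi$: a bound $G(\psi)\le 2E$ would permit $\psi$ to lie arbitrarily close to any multiple of $2\pi$. Replacing $g$ by $|g|$ in the antiderivative is the key observation, as it produces a quantity that is strictly monotone in $\psi$ and hence invertible. The chain rule step is justified by local absolute continuity of $\psi(t,\cdot)$, which follows from $\psi_r\in L^2_{\loc}(\sinh r\,dr)$ guaranteed by the finite energy assumption (and can alternatively be established via a standard smooth approximation).
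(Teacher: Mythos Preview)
Your proof is correct and follows essentially the same approach as the paper: define the antiderivative $\Phi(\psi)=\int_0^\psi |g(s)|\,ds$ (the paper calls it $G$), bound $|\Phi(\psi(t,r))|$ by the energy, and invert. The paper reaches the bound $|G(\psi)|\le E(\vec\psi)$ in one line (implicitly via $2ab\le a^2+b^2$), while you spell out the Cauchy--Schwarz step and obtain the slightly sharper constant $2E/k$; the concluding discussion about why $|g|$ rather than $g$ is needed is a nice addition not present in the paper.

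One small correction: your claim that ``$|g(\psi)|\to\infty$ as $|\psi|\to\infty$'' is false for $g(\psi)=\sin\psi$. What you actually need (and what holds in both cases) is that the zeros of $g$ are isolated and $\int_0^{\pm\infty}|g|=\pm\infty$; this is what makes $\Phi$ a strictly increasing homeomorphism of $\bbR$. The rest of the argument is unaffected.
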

\begin{proof} Following, e.g.,~\cite[Chapter $8$]{SSbook} we define the function
\ant{
G(\psi)  = \int_{0}^{\psi} \abs{g(\rho)} \, d \rho,
}
and we note that $G(0) = 0$, $G$ is increasing, and $G(\psi) \to \infty$ as $\psi \to \infty$. 
For any fixed $t \in I$ we have
\EQ{\label{G bound}
\abs{G( \psi(t, r)) }&= \abs{G( \psi(t,r)) - G( \psi(t, 0))} = \abs{\int_{\psi(t, 0)}^{\psi(t, r)} \abs{g (\rho)} \, d \rho}\\
& = \int_0^{r} \abs{ g( \psi(t, r))} \abs{ \psi_r(t, r)} \, dr  \le E( \vec \psi)\\
}
Then~\eqref{l inf bound} follows from \eqref{G bound} and the fact that $G$ is increasing. 
\end{proof}

Next, we establish  an equivalence of the Cauchy problems~\eqref{wm} with~\eqref{u eq}, and \eqref{YM reduced eq} with \eqref{u eq YM},  by providing  an isomorphism between the spaces $\HH_0$ and $H^1 \times L^2( \Hp^{2k+2})$, where $\HH_0$ is defined as in~\eqref{H0 def} and where for radially symmetric $u, v : \Hp^{2k+2} \to \R$ we set
\ant{
\|(u, v)\|_{ H^1 \times L^2( \Hp^{2k+2})}^2:= \int_0^{\infty} \left(u_r^2(r)+ v^2(r) \right) \sinh^{2k+1} r \, dr.
}
We use the notation $H^1$ for the above norm as opposed to $\dot{H}^1$ due to the embedding~
\EQ{\label{H1L2}
\dot{H}^1( \Hp^d) \hookrightarrow L^2( \Hp^d)
} for $d \ge 2$, see for example~\cite{Bray}. 
We prove the following simple lemma.
\begin{lem}\label{2d to 4d}Let $(\psi, \phi) \in \HH_0(\Hp^2)$ with $\psi(0)=0$, $\psi( \infty) = 0$. Then if we define $(u, v)$ by
\ant{
 (\psi(r), \phi(r)) = ( \sinh ^kr \,  u(r), \sinh ^kr \,  v(r))
 }
  we have
  \EQ{\label{H=H}
  \| (\psi, \phi) \|_{\HH_0}^2 \le  \|(u, v) \|_{H^1 \times L^2(\Hp^{2k+2})}^2 \le C(k) \|(\psi, \phi) \|_{\HH_0}^2.
}
\end{lem}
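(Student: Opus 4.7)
\medskip

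\noindent\emph{Proof proposal.} The plan is to substitute $\psi = \sinh^{k} r \cdot u$ and $\phi = \sinh^{k} r \cdot v$ directly into the definition of $\|(\psi,\phi)\|_{\HH_0}^{2}$, expand, and perform a single integration by parts to arrive at an \emph{exact identity} relating the two norms. I would first verify the identity for $C^{\infty}_{c}((0,\infty))$ pairs, where the boundary terms vanish trivially, and then extend to all of $\HH_0$ with endpoints $\psi(0)=\psi(\infty)=0$ by density.

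The key computation is as follows. Differentiating yields $\psi_r = k\sinh^{k-1} r \cosh r\, u + \sinh^{k} r\, u_r$, so after multiplying through by $\sinh r$ the integrand of $\|(\psi,\phi)\|_{\HH_{0}}^{2}$ becomes
\ali{\nonumber
\sinh^{2k+1}\! r\, u_r^{2} + \sinh^{2k+1}\! r\, v^{2} + k^{2}(\cosh^{2}\! r + 1)\sinh^{2k-1}\! r\, u^{2} + 2k\sinh^{2k}\! r\cosh r\, u u_r.
}
The cross term is handled by writing $u u_r = \tfrac{1}{2}(u^{2})_r$ and integrating by parts, using $(\sinh^{2k} r\cosh r)' = 2k\sinh^{2k-1} r\cosh^{2} r + \sinh^{2k+1} r$. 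The boundary contribution is
\ali{\nonumber
k\bigl[\sinh^{2k} r\cosh r\cdot u^{2}\bigr]_{0}^{\infty} = k\bigl[\psi^{2}\cosh r\bigr]_{0}^{\infty},
}
which vanishes on $C^{\infty}_{c}((0,\infty))$. Consolidating the $u^{2}$ coefficients using $\cosh^{2} r - 1 = \sinh^{2} r$ yields the algebraic simplification
\ali{\nonumber
k^{2}(\cosh^{2} r + 1)\sinh^{2k-1} r - k\bigl(2k\sinh^{2k-1} r\cosh^{2} r + \sinh^{2k+1} r\bigr) = -k(k+1)\sinh^{2k+1} r,
}
giving the clean identity
\ali{\nonumber
\|(\psi,\phi)\|_{\HH_{0}}^{2} = \|(u,v)\|_{H^{1}\times L^{2}(\Hp^{2k+2})}^{2} - k(k+1)\|u\|_{L^{2}(\Hp^{2k+2})}^{2}.
}

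From this identity the first inequality is immediate since $k(k+1)\|u\|_{L^{2}}^{2}\ge 0$. For the reverse inequality I would invoke the standard fact that the bottom of the spectrum of $-\Dlt_{\Hp^{2k+2}}$ equals $(2k+1)^{2}/4$, which on radial functions gives the Poincar\'e bound $\|u_{r}\|_{L^{2}(\Hp^{2k+2})}^{2} \ge \frac{(2k+1)^{2}}{4}\|u\|_{L^{2}(\Hp^{2k+2})}^{2}$. Since $4k(k+1)<(2k+1)^{2}$ there is a strict gap, and setting $\dlt = (2k+1)^{-2}$ a direct interpolation shows
\ali{\nonumber
\|u_{r}\|_{L^{2}}^{2} - k(k+1)\|u\|_{L^{2}}^{2} \ge \dlt\|u_{r}\|_{L^{2}}^{2},
}
whence $\|(\psi,\phi)\|_{\HH_{0}}^{2}\ge \dlt\|(u,v)\|_{H^{1}\times L^{2}(\Hp^{2k+2})}^{2}$, yielding $C(k)=(2k+1)^{2}$.

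The main (mild) obstacle is the density step: one must justify that smooth compactly supported pairs are dense in the subspace of $\HH_{0}$ pinned at $0$ at both endpoints, so that the boundary term $k[\psi^{2}\cosh r]_{0}^{\infty}$ really drops in the limit. This is a standard cutoff-and-mollification argument, but requires some care near $r=\infty$ because the weight $\cosh r$ grows exponentially; once the identity is established on $C^{\infty}_{c}((0,\infty))$ with constants independent of the function, the two-sided inequality extends to the full space $\HH_{0}$ by continuity of both norms.
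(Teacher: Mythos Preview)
Your proof is correct and follows essentially the same approach as the paper: the paper establishes the identical integration-by-parts identity $\int_0^\infty(\psi_r^2 + k^2\psi^2/\sinh^2 r)\sinh r\,dr = \int_0^\infty(u_r^2 - k(k+1)u^2)\sinh^{2k+1}r\,dr$ and then invokes the embedding $\dot H^1(\Hp^d)\hookrightarrow L^2(\Hp^d)$ (i.e., the same spectral gap you use) for the reverse inequality. You supply considerably more detail---the explicit algebra, the sharp constant $C(k)=(2k+1)^2$, and the density discussion---all of which the paper omits, but the underlying argument is the same.
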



\begin{proof} Since $\|\phi\|_{L^2(\Hp^2)}^2= \|v \|_{L^2( \Hp^{2k+2})}^2$ it suffices to just consider $u$ and $\psi = \sinh^k r u$. Integration by parts yields the following identity:
\EQ{\label{2d 4d}
\int_0^{\infty} \left(\psi_r^2 + k^2\frac{\psi^2}{\sinh^2 r} \right) \, \sinh r \, dr =  \int_0^{\infty}( u_r^2-k(k+1)u^2 )\sinh^{2k+1} r\, dr.
}
The lemma now follows from the above and \eqref{H1L2}
\end{proof}


\section{The operators $H_{V_{\la,k}}$ and $H_{W_\la}$:  Analysis of the Spectrum}\label{spectra}
In this section we give a detailed analysis of the spectrum of the~Schr\"odinger operators $H_{V_{\la,k}}$ and $H_{W_\la}$ defined in~\eqref{H Vla} and \eqref{H Wla}, which are  self-adjoint on the domain $\calD_k: = H^2( \Hp^{2k+2})$, restricted to radial functions. 

In Section~\ref{subsec:smallLmb}, we prove Theorem~\ref{S2 no eval}$(i)$ and Theorem~\ref{YM no eval}. Specifically, we prove that for $\lambda\leq1$ the spectra of $H_{V_{\la,k}}$ and $H_{W_\la}$ coincide with that of the unperturbed operator $H_{0,k} := - \Delta_{\Hp^{2k+2}} - k(k+1)$ -- note that here for the Yang-Mills problem~\eqref{H Wla} we have $k =2$. 
In Section~\ref{s:k}, we establish Theorem~\ref{S2 no eval}$(ii)$ concerning the large $k$ case.
After showing in Section~\ref{s:noneg} that any eigenvalues for $H_{V_{\la,k}}$ or $H_{W_\la}$ must lie in the spectral gap $(0, 1/4)$, we devote the remainder of the paper, i.e., Section~\ref{s:eval}, to proving Theorem~\ref{e val} and Theorem~\ref{YM e val}. That is, we show that for $\la$ large enough, there is a unique simple eigenvalue $\mu_{\la}^2$ in the spectral gap $(0, 1/4)$ with no threshold resonance at $1/4$ and $\mu_\la^2 \to 0$ as $\la \to \infty$. 

First we pass to the half-line by conjugating  by $\sinh^{k+\frac{1}{2}} r$.  Indeed,  the map
\EQ{ \label{4 to 1}
L^2(\Hp^{2k+2}) \ni \fy\mapsto \sinh^{k+\frac{1}{2}} r\, \fy =: \phi \in L^2(0, \infty)
}
is an isomorphism of $L^2(\Hp^{2k+2})$, restricted to radial functions, with $L^2([0, \infty))$. If we define $\LL_0, \LL_{V}$ by
\EQ{ \label{LLdef}
&\LL_0 := - \p_{rr} + \frac{1}{4} + \frac{(k^2-1/4)}{\sinh^2 r},\\
&\LL_{V} := - \p_{rr} + \frac{1}{4} + \frac{(k^2-1/4)}{\sinh^2 r} + V(r),
}
we have
\EQ{ \label{conj}
&(H_{0,k} \fy)(r) = \sinh^{-k-\frac{1}{2}}r (\LL_0  \phi)(r),\\
&(H_{V} \fy)(r) = \sinh^{-k-\frac{1}{2}} r(\LL_{V} \phi)(r).
}
Hence it suffices to work with $\LL_0,$ $\LL_{V_{\la,k}},$ and $\LL_{W_\la}$ on the half-line. 
 We observe a few preliminary facts concerning solutions to
\EQ{\label{LL ev}
\LL_{\VV} \phi =  \mu^2 \phi, \, \, \mfor \mu^2 \in \R, \, \,  \mu \in \C,\qquad \VV=V_{\lambda,k}\mathrm{~or~}W_\lambda.
}

\begin{lem} \label{LL ev 0} Let $\mu \in \C, \, \, \mu^2 \in \R$ and suppose  $\phi_{\mu}$ is a solution to~\eqref{LL ev} with $\phi_{\mu} \in L^2([0, c))$ for some $c>0$. Then there exists a number $a \in \R$ such that
\begin{align} 
\phi_{\mu}(r) =& a\,  r^{k+\frac{1}{2}} + o(r^{k+\frac{1}{2}})  \mas r \to 0, \\
\phi_{\mu}'(r) =& a (k+\frac{1}{2}) r^{k-\frac{1}{2}} + o(r^{k-\frac{1}{2}})  \mas r \to 0.
\end{align}
\end{lem}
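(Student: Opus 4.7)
The plan is to treat the eigenvalue equation $\LL_{\VV}\phi_\mu = \mu^2\phi_\mu$ as a linear ODE with a regular singular point at $r=0$ and apply the Frobenius method. First I would rewrite the equation as
\begin{equation*}
\phi_\mu''(r) = \frac{k^2 - \tfrac{1}{4}}{r^2}\phi_\mu(r) + P(r)\phi_\mu(r),
\end{equation*}
where
\begin{equation*}
P(r) := \bigl(k^2 - \tfrac{1}{4}\bigr)\!\left(\frac{1}{\sinh^2 r} - \frac{1}{r^2}\right) + \frac{1}{4} + \VV(r) - \mu^2.
\end{equation*}
The parenthesized term equals $-\tfrac{1}{3}+O(r^2)$ near $0$ by Taylor expansion of $\sinh$, and a short computation using the explicit formulas from Proposition~\ref{hm prop} and Proposition~\ref{hm prop ym} shows $V_{\la,k}(r)=O(r^{2k-2})$ and $W_\la(r)=O(r^2)$ as $r\to 0$. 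Hence $P$ is real-analytic on a neighborhood of $0$.

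Next I would invoke the standard Frobenius theory at the regular singular point. The indicial equation $\nu(\nu-1)=k^2-\tfrac{1}{4}$ has roots $\nu_\pm = \tfrac{1}{2}\pm k$ differing by the integer $2k$. By the classical construction (built, for example, via Picard iteration on the Volterra integral equation obtained from variation of parameters against the indicial solutions $r^{k+1/2}$ and $r^{1/2-k}$), there exist two linearly independent local solutions on some interval $[0,c')$:
\begin{equation*}
\phi_1(r) = r^{k+1/2}\bigl(1 + O(r)\bigr), \qquad \phi_2(r) = r^{1/2-k}\bigl(1+O(r)\bigr) + C\,\phi_1(r)\log r,
\end{equation*}
for a constant $C$ which may vanish (and is present in general because the indicial roots differ by an integer). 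Differentiating the convergent series representation of $\phi_1$ term by term yields $\phi_1'(r)=(k+\tfrac{1}{2})r^{k-1/2}(1+O(r))$.

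Finally I would use the $L^2$ hypothesis to eliminate $\phi_2$. For $k\ge 1$ one has $|r^{1/2-k}|^2 = r^{1-2k}$ with $1-2k\le -1$, so $r^{1/2-k}\notin L^2([0,c))$, whereas $\phi_1(r)\log r = O(r^{k+1/2}\log r)$ lies in $L^2([0,c))$. Hence the singular piece dominates and $\phi_2\notin L^2([0,c))$. Writing $\phi_\mu = a\phi_1 + b\phi_2$ in the two-dimensional solution space and using $\phi_\mu\in L^2([0,c))$ forces $b=0$, so $\phi_\mu = a\phi_1$ for a real constant $a$ (real since $\phi_\mu$ is real-valued). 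The asymptotic expansions of $\phi_1$ and $\phi_1'$ above then yield the two claimed identities.

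The main technical point, and the only place one must be careful, is the construction of the Frobenius pair in the resonant case $\nu_+-\nu_-=2k\in\mathbb{Z}$, where a logarithmic term can appear in $\phi_2$; however this is handled by the standard reduction-of-order/variation-of-parameters argument, and it does not affect our conclusion because the logarithm is square-integrable at $0$ and is therefore killed together with $\phi_2$.
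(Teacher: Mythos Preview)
Your approach is essentially the same as the paper's: both view the equation as a perturbation of the singular operator $-\partial_{rr} + (k^{2}-\tfrac{1}{4})r^{-2}$, obtain the fundamental pair $\{r^{k+1/2}, r^{1/2-k}\}$ (plus the possible logarithm you mention), construct local solutions by a variation-of-parameters/Picard iteration, and use the $L^{2}$ condition to kill the singular branch. The paper phrases this as ``limit point at $r=0$'' plus variation of parameters, while you phrase it as Frobenius at a regular singular point; the content is identical, and your explicit remark about the resonant case $\nu_{+}-\nu_{-}=2k\in\mathbb{Z}$ is a nice touch of care the paper omits.

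One small slip: you write $W_{\lambda}(r)=O(r^{2})$, but in fact $W_{\lambda}(r)\to -6\lambda^{2}$ as $r\to 0$ (since $Q_{\YM,\lambda}(r)\sim \tfrac{\lambda^{2}}{2}r^{2}$, so $Q(Q-2)\sim -\lambda^{2}r^{2}$ and the $\sinh^{2}r$ in the denominator cancels this). This does not affect your argument, since all you need is that $P$ is analytic (in particular bounded) near $0$, which remains true.
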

\begin{proof} This follows from the fact that the operator $\LL_0- 1/4$ can be approximated near $r=0$ by the singular operator
\ant{
L_0 := - \p_{rr} + \frac{(k^2-1/4)}{r^2}.
}
Note that $L_0$ is ``limit point"  at $r=0$ and a fundamental system for $L_0f = 0$ is given by $\{r^{k+\frac{1}{2}}, r^{-k+\frac{1}{2}}\}$. Therefore a solution $\phi_{\mu}$ as in~\eqref{LL ev} can be written in terms of this fundamental system by way of the variation of parameters formula, which converges for small $r$. The $L^2([0, c))$ requirement  ensures that the coefficient in front of $r^{-k+\frac{1}{2}}$ must be $0$ and the leading order behavior is given by $r^{k+\frac{1}{2}}$.
\end{proof}
\begin{lem} \label{LL ev 1}Suppose $\phi_{0}$ is a solution to~\eqref{LL ev} with $\mu^2 = \frac{1}{4}$. Then there are constants $a, b \in \R$ so that
\begin{align} 
\phi_0(r) =& a + b \, r + O(re^{-2r}) \mas r \to \infty, \label{phiinf} \\
\phi_0'(r) =& b + O(re^{-2r}) \mas r \to \infty.
\end{align}
\end{lem}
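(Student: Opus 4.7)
The plan is to treat the equation $\LL_\VV \phi_0 = \frac{1}{4}\phi_0$ as a perturbation of $-\phi_0'' = 0$ on a neighborhood of infinity: the $1/4$ inside $\LL_0$ exactly cancels the eigenvalue, so the equation reduces to
$$\phi_0'' = P(r) \phi_0, \qquad P(r) := \frac{k^2 - 1/4}{\sinh^2 r} + \VV(r).$$
The key preparatory input is that $|P(r)| \lesssim e^{-2r}$ as $r \to \infty$. For $\VV = V_{\la,k}$ this is immediate from the explicit formula $Q_{\la,k}(r) = 2\arctan(\la^k \tanh^k(r/2))$ in Proposition~\ref{hm prop}: the factor $\cos(2 Q_{\la,k}) - 1$ is uniformly bounded and converges exponentially to its limit, and division by $\sinh^2 r$ produces $O(e^{-2r})$. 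The same argument based on Proposition~\ref{hm prop ym} handles $\VV = W_\la$, while $(k^2-1/4)/\sinh^2 r = O(e^{-2r})$ is elementary. The two linearly independent solutions of the unperturbed equation $\phi_0'' = 0$ are the constants and the linear functions, whose coefficients $a$ and $b$ will be recovered as convergent integrals.

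The first step is an a priori linear bound $|\phi_0(r)| \leq C(1+r)$. Fix some $R_0 \geq 1$ and rewrite the ODE in integrated form
$$\phi_0(r) = \phi_0(R_0) + (r - R_0)\phi_0'(R_0) + \int_{R_0}^r (r-s) P(s) \phi_0(s)\, ds.$$
Dividing by $1+r$ and using $\frac{r-s}{1+r} \leq 1$, a Gronwall argument combined with $\int_{R_0}^\infty (1+s) |P(s)|\, ds < \infty$ gives the bound.

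Next, integrating $\phi_0'' = P(r) \phi_0$ once from $R_0$ to $r$ yields $\phi_0'(r) = \phi_0'(R_0) + \int_{R_0}^r P(s) \phi_0(s)\, ds$. Since $|P(s) \phi_0(s)| \lesssim (1+s) e^{-2s}$ is integrable at infinity, the limit $b := \lim_{r \to \infty} \phi_0'(r)$ exists and satisfies
$$|\phi_0'(r) - b| \leq \int_r^\infty |P(s) \phi_0(s)|\, ds \lesssim \int_r^\infty (1+s) e^{-2s}\, ds = O(r e^{-2r}),$$
which is the second assertion. Writing $G(s) := \phi_0'(s) - b$, so that $\int_{R_0}^\infty |G(s)|\, ds < \infty$, one then defines
$$a := \phi_0(R_0) - b R_0 + \int_{R_0}^\infty G(s)\, ds,$$
and integrates once more to conclude $\phi_0(r) = a + b r - \int_r^\infty G(s)\, ds = a + b r + O(r e^{-2r})$.

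The only genuinely delicate point is the exponential decay of $\VV$ at infinity; once this is in place, everything else is a standard asymptotic-integration scheme in the spirit of Levinson's theorem, here in the favorable regime where the coefficient is exponentially (rather than merely integrably) small, which produces the sharp $O(r e^{-2r})$ remainders as stated.
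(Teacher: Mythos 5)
Your proof is correct and follows essentially the same route as the paper, which (more tersely) observes that $\VV$ and $(k^2-1/4)/\sinh^2 r$ decay like $e^{-2r}$, that $L_\infty = -\rd_{rr}$ approximates $\LL_\VV - \tfrac{1}{4}$ near infinity, and that the conclusion follows by variation of parameters with the fundamental system $\set{1,r}$. You have simply spelled out the standard details the paper leaves to the reader: the a priori linear-growth bound via Gronwall, and the two integrations that identify $b$ and $a$ as convergent integrals with the stated $O(re^{-2r})$ remainders.
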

\begin{proof} This follows from the fact that we can find constants $C_{\la,k}, C_k>0$ so that for $r$ large we have $\VV(r) \le C_{\la,k} e^{-2r}$ and $\frac{(k^2-1/4)}{\sinh^2 r} \le C_k e^{-2r}$. Therefore the operator
\ant{
L_{\infty} := -\p_{rr}
}
is a good approximation of  $\LL_{\VV} - 1/4$ near $r= \infty$. A fundamental system for $L_{\infty} f = 0$ is  given by $\{1, r\}$, and the conclusions of Lemma~\ref{LL ev 1} follow from the variation of parameters formula.  
\end{proof}

Given the conclusions of Lemma~\ref{LL ev 0} and Lemma~\ref{LL ev 1} we can now precisely  define the term \emph{threshold resonance}.
\begin{defn}\label{res def} We say that $\phi_0$ is a \emph{threshold resonance} for $\LL_{\VV},$ $\VV=V_{\la,k}\mathrm{~or~}W_\la,$ if $\phi_0$ is \emph{not} in $L^2(0,\infty)$, but is a bounded solution to
\ant{
\LL_{\VV} \phi_0 = \frac{1}{4} \phi_0.
}
In particular we can find non-zero numbers $a, b \in \R$ so that
\ant{
&\phi_0(r) = a r^{k+\frac{1}{2}} + o(r^{k+\frac{1}{2}}) \mas r \to 0,\\
&\phi_0(r) = b + O(r e^{-2r}) \mas r \to \infty.
}
\end{defn}

With these preliminary facts in hand, we are ready to begin the proofs of Theorem~\ref{S2 no eval} and Theorem~\ref{YM no eval}.
\subsection{Spectrum of $H_{V_{\lmb,k}}$ or $H_{W_\la}$ for $\lmb \leq1$} \label{subsec:smallLmb} In this subsection we prove Theorem~\ref{S2 no eval}$(i)$ and Theorem~\ref{YM no eval}, modulo the statement that there are no embedded eigenvalues in the continuous spectrum, which will be established in Proposition~\ref{no neg spec}$(ii)$.
We note that the spectrum for the self-adjoint operator $\LL_0$ defined above is purely absolutely continuous and is given by $\s(\LL_0) = [1/4, \infty)$, and in particular there is no negative spectrum, no eigenvalue in the gap $[0, 1/4)$, and the threshold $1/4$ is neither an eigenvalue nor a resonance. 
In light of this discussion, to prove Theorems \ref{S2 no eval} and \ref{YM no eval} it suffices to show that in the case $0 \leq \la \leq 1$ the same can be said of the spectra $ \s(\LL_{V_{\la,k}})$ and $\sigma(\LL_{W_\la})$.  In particular, there is no negative spectrum, there are no eigenvalues in the gap $[0, 1/4)$, and the threshold $\frac{1}{4}$ is neither an eigenvalue nor a resonance.

We record a pair of alternative expressions for $\Lline$ and $\LL_{W_\la}.$ 
Using the notation $g_{\Sp^2}(x):=\sin x$ and $g_\YM(x)=x-\frac{x^2}{2}$ we have
\EQ{\label{Lline alt expression}
&\Lline := - \p_{rr} + \frac{1}{4} - \frac{1}{4\sinh^2 r} + k^2\frac{(g_{\Sp^2}^\prime(\Qu))^2+g_{\Sp}(\Qu)g_{\Sp}^{\prime\prime}(\Qu)}{\sinh^2r}, \\
&\LL_{W_\la}:=- \p_{rr} + \frac{1}{4} - \frac{1}{4\sinh^2 r} + 4\frac{(g_{\YM^2}^\prime(Q_{\YM, \la}))^2+g_{\YM}(Q_{\YM, \la})g_{\YM}^{\prime\prime}(Q_{\YM, \la})}{\sinh^2r}.
}

 To unify notation we use $g$ to denote $g_{\Sp^2}$ or $g_{\YM}$ and $Q$ to denote $\Qu$ or $Q_{\YM, \la}$ depending on the context. Note that in this notation $Q$ satisfies $$Q^\prime=k\frac{g(Q)}{\sinh r}.$$
\begin{proof}[Proof of Theorem~\ref{S2 no eval}$(i)$ and Theorem~\ref{YM no eval}]
Let $\mu \in \C$ with $\mu^2 \le \frac{1}{4}$. Suppose that $ \VV=V_{\la,k}$ or $W_\la,$ and $\phi_\mu$ is a solution to
\EQ{\label{phi m}
\LL_{\VV} \phi_\mu = \mu^2 \phi_{\mu}.
}
If  $\mu^2 \le 1/4$ is an eigenvalue, we  assume  that it is the smallest eigenvalue, and by a variational principle, we can further assume that the corresponding eigenfunction $\phi_{\mu} \in L^2$ is unique, (i.e., $\mu^2$ is simple) and strictly positive. If $\mu^2 = \frac{1}{4}$ and is not an eigenvalue, we  assume that $\phi_{\mu}$ is a
non-negative threshold resonance. In both cases, we know by Lemma~\ref{LL ev 0} that $\phi_{\mu}(r) = O(r^{k+\frac{1}{2}})$ as $r \to 0$. If $\phi_{\mu}$ is an eigenvalue, then $\phi_{\mu}(r),~\phi^\prime_{\mu}(r) \to 0$ as $r \to \infty$. If $\phi_{\mu}(r)$ is a threshold resonance, we know by Definition~\ref{res def} that $\phi_\mu(r) \to b >0$, and $\phi^\prime_\mu(r) \to 0$ as $r \to \infty$. Now, define a function $V(r)$ by 
\ant{
\LL_\VV-\frac{1}{4}&=-\partial_r^2+\left(k^2\frac{\left(g^\prime(Q)\right)^2+g(Q)g^{\prime\prime}(Q)}{\sinh^2r}-\frac{1}{4\sinh^2r}\right) =:-\partial_r^2+V.
}
In anticipation of applying a version of the Sturm comparison principle, we seek a positive function $f$ and a potential  $U$  satisfying
\ant{
(- \p_{r}^2 + U)f = 0, \quad  \textrm{with}  \, \, \, 
V-U\geq0.
}
If we define 
\EQ{
f(r):= g(Q) = k \sinh r \,  Q'(r)
}
then 
\ant{
f^{\prime\prime}=Uf,
}
with 
\ant{
U(r):=k^2\frac{\left(g^\prime(Q)\right)^2+g(Q)g^{\prime\prime}(Q)}{\sinh^2r}-\frac{kg^\prime(Q)\cosh r}{\sinh^2r}.
}
We claim that 
\EQ{\label{v-u}
V-U=\frac{1}{\sinh^2r}\left(k\cosh rg^\prime(Q)-\frac{1}{4}\right) \ge \frac{3}{4 \sinh^2r}
}
To prove~\eqref{v-u} we treat the cases $g=g_{\Sp^2},~Q=\Qu$ and $g=g_\YM,~Q=Q_{\YM, \la}$ separately.  Note that for $\la \le 1$ we have
\ant{
\la^{2k} \tanh^{2k}(r/2) \le \tanh^2(r/2) \mfor \forall k \ge 1, \, \, \, r \ge 0
}
Therefore, 
\ant{
\cosh rg^\prime(\Qu)  &=  \cosh r\left(\frac{1-\la^{2k}\tanh^{2k}(r/2)}{1+ \la^{2k}\tanh^{2k}(r/2)}\right) \ge  \cosh r\left(\frac{1-\tanh^{2}(r/2)}{1+ \tanh^2(r/2)}\right)  = 1
}
which proves~\eqref{v-u} for $Q = \Qu$. 
For the Yang-Mills problem using the formula $g_\YM^\prime(v)=1-v$ we have
\ant{
\cosh r \, g_\YM^\prime(Q_{\YM, \la})=\frac{\cosh r\left(1-\la^2\tanh^2(r/2)\right)}{1+\la^2\tanh^2(r/2)}   \ge   \cosh r\left(\frac{1-\tanh^{2}(r/2)}{1+ \tanh^2(r/2)}\right) = 1
}
which proves~\eqref{v-u} in the case $Q = Q_\YM$.  Now, since $\mu^2- 1/4 \le 0$ and since $\phi_\mu$ solves~\eqref{phi m}   we observe that for any $R>0$
\ant{
0 \ge \left(\mu^2-\frac{1}{4}\right)\int_0^R\phi_\mu g(Q) dr
 = -\int_0^R\phi_\mu^{\prime\prime} g(Q) dr+\int_0^R V \phi_\mu g(Q)dr.
}
Since
\begin{multline*}
	-\int_0^R\phi_\mu^{\prime\prime} g(Q) dr=-\phi_\mu^\prime(R) g(Q(R))+\int_0^R\phi_\mu^\prime\left(g(Q)\right)^\prime dr\\
	=-\phi_\mu^\prime(R)g(Q(R))+k\phi_\mu(R)\frac{g (Q(R))g^\prime(Q(R))}{\sinh R} -\int_0^R\phi_\mu\left(g(Q)\right)^{\prime\prime}dr,
\end{multline*}
it follows that
\ant{
0\geq -\phi_\mu^\prime(R)g(Q(R))+k\phi_\mu(R)\frac{g(Q(R))g^\prime(Q(R))}{\sinh R}+\int_0^R(V-U)\phi_\mu g(Q) dr.
}
Note that since $\la \le 1$, the second term above is non-negative. Using this along with the bound~\eqref{v-u} we have 
\ant{
 \frac{3}{4}  \int _0^R \phi_\mu g(Q)  \frac{1}{ \sinh^2r} \, dr \le \phi_\mu^\prime(R)g(Q(R))
 }
 We note that the left-hand side above is strictly positive and increasing in $R$ and hence can be bounded below by a fixed constant $\de>0$. This implies that 
 \ant{
 0 < \de  \le  \phi_\mu^\prime(R)g(Q(R)),   \, \quad  \forall \, R>0
 }
 However, the right-hand side above tends to zero as $R \to \infty$ which yields a contradiction for $R>0$ large enough, completing the proof. 
\end{proof}

\subsection{Proof of Theorem~\ref{S2 no eval}$(ii)$}  \label{s:k}
Next, we prove that in the case of $k$-equivariant wave maps, one can rule out eigenvalues and resonances for harmonic maps with images in  a region that is slightly larger than the northern hemisphere, \emph{which is independent of $k$}. In other words, we prove Theorem~\ref{S2 no eval}$(ii)$.

In the proof of Theorem~\ref{S2 no eval}$(i)$ we established the fact that for each $k \in \N$ and for each $\la \le 1$ the spectrum of the linearized operator $\Lline$ is purely absolutely continuous, with $\s(\Lline) = [1/4, \infty)$, where the threshold $\frac{1}{4}$ is neither an eigenvalue nor a resonance. 

We claim that for each $k$ we can find a number $\de(k) >0$ so that spectrum of $ \s(\Lline) = [1/4, \infty)$ for all $\la< 1+ \de(k)$ is purely absolutely continuous with no eigenvalue or resonance at the edge $1/4$. 
To see this one can study the nonzero solutions to $\Lline \phi_0^{\la} = \frac{1}{4} \phi_0^{\la}$ which satisfy  $\phi_{0}^{\la} \in L^2[0, c)$ for all $c>0$. We normalize so that $\phi_{0}^{\la} = r^{k+\frac{1}{2}} + o(r^{k+\frac{1}{2}})$. 
By Sturm's oscillation theory, having an eigenvalue $\mu^2 < 1/4$ is equivalent to such $\phi_{0}^\la$ having a zero (i.e.,  changing signs).  Thus, by Theorem~\ref{S2 no eval}$(i)$ and Lemma~\ref{LL ev 1}, for each fixed $k$ there exists a positive solution $\phi^1_0$ to $\LL_{V_{1, k}} \phi^1_0 = \frac{1}{4} \phi_0^{1}$ with $\phi_{0}^1 \in L^2[0, c)$ for all $c>0$, but $\phi^1_0 \not \in L^2[0, \infty)$. Moreover, such a positive solution $\phi_{0}^1$ has the property that we can find numbers $a , b \in\R$ with $b>0$  (this latter condition since $\phi_0^1$ cannot be a resonance) so that 
\ant{
\phi_{0}^1(r)  = a + b r +O(r e^{-2r}) \mas r \to \infty
}
Finally, by the continuity of $\phi^\la$ in $r$ and $\la$ at $\la =1$, we can find $\e(k)$ such that for all $\la< 1+ \e(k)$ there exist positive solutions $\phi_0^\la$ to $\Lline \phi_0^{\la} = \frac{1}{4} \phi_0^{\la}$ which satisfy  $ \phi^{\la} \in L^2[0, c)$ for all $c>0$, but $\phi^\la \not \in L^2[0, \infty)$. And moreover, for any such positive solution $\phi^\la \in L^{2}[0, c)$ we can find numbers $a, b$ with $b>0$ so that 
\ant{
\phi^\la(r)  = a + b r +O(r e^{-2r}) \mas r \to \infty
}
To understand the conclusions reached above in terms of the geometry of the image of the underlying harmonic maps $Q_{\la, k}$,  with $k$ and $\la$ both varying, it is more natural to consider the number $\la^k$ rather than $\la$ as the parameter which measures the angle corresponding to how far $Q_{\la, k}$ wraps around the sphere. Indeed we have 
\ant{
Q_{\la, k}( \infty) = 2 \arctan(\la^k).
}
With this in mind, setting $$\Theta:= \la^k$$ we note that we have proved that for each $k \in \N$, there exists a number $\de(k)>0$ so that for all $\Theta< 1 + \de(k)$ the spectrum $\s(\Lline) = [1/4, \infty)$ is purely absolutely continuous with no eigenvalue or resonance at the edge $1/4$; see Proposition~\ref{no neg spec} for the absence of eigenvalues below $0$ or embedded in $[1/4, \infty)$.

The goal now is to show that $\de = \de(k)$ can in fact be chosen independently of $k$. This requires an examination of the spectrum of $\Lline$ in the limit $k \to \infty$. To examine this behavior we find it convenient to consider the following change of variables. Set 
\EQ{
 \rho :=  \la^k \tanh^k(r/2) =  \Theta \tanh^k(r/2)
 }
Then $ r = r( \rho) = 2 \arctanh( ( \rho/ \Theta)^{\frac{1}{k}})$ and 
\ant{
 \frac{\p r}{ \p \rho} =    \frac{2}{k}\left[\left( \frac{\Te}{\rho} \right)^{\frac{1}{k}} - \left( \frac{\Te}{\rho} \right)^{-\frac{1}{k}} \right]^{-1} \frac{1}{\rho}
 }
 For convenience we define the function 
 \EQ{
 \om_{k, \Te}( \rho) :=  \frac{2}{k}\left[\left( \frac{\Te}{\rho} \right)^{\frac{1}{k}} - \left( \frac{\Te}{\rho} \right)^{-\frac{1}{k}} \right]^{-1}
 }
 Thus we have 
 \EQ{
  dr = \om_{k,\Te} (\rho)\frac{d \rho}{\rho} , \qquad  \frac{ \p}{\p r} = \left(\frac{ \p r}{ \p \rho}\right)^{-1}  \frac{\p}{\p \rho}  =   (\om_{k,\Te} (\rho))^{-1} \rho  \frac{\p}{ \p \rho} 
  }
  Recall that we are studying the operator $\Lline$ which can be expressed as follows: 
  \EQ{
  \Lline &=  - \frac{\p^2}{\p r^2} + \frac{1}{4} +  \frac{k^2-\frac{1}{4}}{\sinh^2 r} + V_{\la, k}  \\
  & =  - \frac{\p^2}{\p r^2} + \frac{1}{4} - \frac{1}{4 \sinh^2 r} + k^2 \frac{ \cos 2Q_{\la, k}}{  \sinh^2 r} 
  }
  We note that in the new variable $\rho$, we have 
  \EQ{
 \sinh r = k \om_{k, \Te}(\rho), \qquad  \cos 2 Q_{\la, k}(r) =  \frac{1 - 6  \rho^2 + \rho^4}{ (1+ \rho^2)^2}
  }
Therefore,  we obtain the renormalized operator $\LL_{k, \Te}$ in the $\rho$-variable defined by 
\EQ{
\LL_{k, \Te} \fy = - \om^{-1}_{k, \Te}   \rho \p_{\rho} \left(  \om^{-1}_{k, \Te} \rho \p_ \rho \fy\right) + \frac{1}{4} \fy  - \frac{1}{4k^2}  \om^{-2}_{k, \Te}   \fy +  \om^{-2}_{k, \Te} \frac{1 - 6  \rho^2 + \rho^4}{ (1+ \rho^2)^2} \fy
} 
so that if we set $\fy( \rho) =   \phi( r)$, then $\Lline \phi (r) =  \LL_{k, \Te} \fy ( \rho)$. The convenience in this change of variables is that it is easy to understand the limiting behavior in $k$. Indeed, we have the following formula. 


\begin{lem}\label{k limit} For each fixed $\rho < \Te$, we have the following point-wise-in-$k$ limit. 
\ant{
 \lim_{k \to \infty} \om_{k, \Te}( \rho)= \lim_{k\to\infty}\left[\left(\frac{k}{2}\right)\left(\left(\frac{\Theta}{\rrho}\right)^{\frac{1}{k}}-\left(\frac{\Theta}{\rrho}\right)^{-\frac{1}{k}}\right)\right]^{-1}=\log^{-1}\left(\frac{\Theta}{\rrho}\right).
}
In fact the limit is monotone. Indeed,  for all $\rrho<\Theta$ and $k\geq1$ we have
\EQ{ \label{om inf} 
 \om_{k, \Te}( \rho)\leq  \om_{\infty, \Te}( \rho) := \log^{-1}\left(\frac{\Theta}{\rrho}\right),
}

\end{lem}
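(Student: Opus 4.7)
The plan is to reduce the entire statement to a single one-variable calculus fact about the function $y \mapsto \sinh(y)/y$ on $(0,\infty)$. Set $x := \log(\Theta/\rho)$, which is strictly positive by the hypothesis $\rho < \Theta$. Then $(\Theta/\rho)^{\pm 1/k} = e^{\pm x/k}$, so
\begin{equation*}
\omega_{k,\Theta}(\rho)^{-1} \;=\; \frac{k}{2}\bigl(e^{x/k} - e^{-x/k}\bigr) \;=\; k \sinh(x/k) \;=\; x \cdot \frac{\sinh(x/k)}{x/k}.
\end{equation*}
Everything that follows will be read off from this identity.

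For the pointwise limit, I would simply use the power series $\sinh(y)/y = \sum_{n\ge 0} y^{2n}/(2n+1)!$, which converges to $1$ as $y \to 0^{+}$. Applying this with $y = x/k \to 0$ yields $k \sinh(x/k) \to x$, and hence $\omega_{k,\Theta}(\rho) \to 1/x = \log^{-1}(\Theta/\rho)$, giving the claimed limit $\omega_{\infty,\Theta}(\rho)$.

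For monotonicity I would observe that the same series representation shows $y \mapsto \sinh(y)/y$ is strictly increasing on $(0,\infty)$, since every nonconstant term $y^{2n}/(2n+1)!$ ($n \ge 1$) has a nonnegative coefficient and is increasing in $y > 0$. Since $k \mapsto x/k$ is strictly decreasing on $k \ge 1$, the composition $k \mapsto \sinh(x/k)/(x/k)$ is strictly decreasing in $k$, and therefore so is $k \sinh(x/k) = x \cdot \sinh(x/k)/(x/k)$. Taking reciprocals, $k \mapsto \omega_{k,\Theta}(\rho)$ is strictly increasing. The bound $\omega_{k,\Theta}(\rho) \le \omega_{\infty,\Theta}(\rho)$ then follows either by passing to the limit in the monotone sequence, or directly from $\sinh(y) \ge y$ for $y \ge 0$, which gives $k \sinh(x/k) \ge x$, i.e.\ $\omega_{k,\Theta}(\rho)^{-1} \ge \omega_{\infty,\Theta}(\rho)^{-1}$.

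There is no serious obstacle here: the only mild care is in verifying that the substitution $x = \log(\Theta/\rho)$ is genuinely positive (which is exactly the hypothesis $\rho < \Theta$), and in noting that the monotonicity of $\sinh(y)/y$ follows from inspection of its Taylor coefficients rather than any delicate estimate. The argument is essentially self-contained and does not use any of the earlier spectral machinery of the paper.
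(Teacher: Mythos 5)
Your proof is correct and takes a genuinely different route from the paper's. The paper works with $x := \Theta/\rho \geq 1$ and proves the inequality \eqref{om inf} by writing
\[
\frac{x^{\alpha}-x^{-\alpha}}{2\alpha} = \frac{\log x}{2\alpha}\int_{-\alpha}^{\alpha} x^{a}\, da \geq (\log x)\, x^{\frac{1}{2\alpha}\int_{-\alpha}^{\alpha} a\, da} = \log x,
\]
i.e.\ an integral representation followed by Jensen's inequality applied to the convex function $a \mapsto x^{a}$; the pointwise limit is then dispatched with a brief appeal to l'H\^opital's rule. You instead substitute $x := \log(\Theta/\rho) > 0$, which collapses $\omega_{k,\Theta}(\rho)^{-1}$ into $k\sinh(x/k) = x\cdot\frac{\sinh(x/k)}{x/k}$, and then everything--the limit, the monotonicity in $k$, and the bound--falls out of the elementary facts that $\sinh(y)/y \to 1$ as $y\to 0^{+}$, that $\sinh(y)/y$ is increasing on $(0,\infty)$, and that $\sinh(y) \geq y$; all three are visible at a glance from the nonnegative Taylor coefficients. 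Your version is more elementary and more unified: one substitution and one series do all the work, and you actually establish the strict monotonicity of $k \mapsto \omega_{k,\Theta}(\rho)$ that the lemma asserts (``the limit is monotone''), whereas the paper's proof only proves the weaker inequality $\omega_{k,\Theta}(\rho) \le \omega_{\infty,\Theta}(\rho)$. The paper's Jensen argument is a nice slick alternative but buys you nothing extra here; both are perfectly adequate and short.
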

\begin{proof}
For the second statement let $x=\frac{\Theta}{\rrho}\geq1,$ and $\alpha=\frac{1}{k}.$ Then
\ant{
\frac{x^\alpha-x^{-\alpha}}{2\alpha}=\frac{1}{2\alpha}\int_{-\alpha}^\alpha\frac{d}{da}x^ada=\frac{\log x}{2\alpha}\int_{-\alpha}^\alpha x^ada\geq\left(\log x\right) x^{\frac{1}{2\alpha}\int_{-\alpha}^\alpha ada}=\log x,
} 
where we have used Jensen's inequality for convex functions. The desired inequality now follows by raising both sides to the power of $-1.$ The limit can be computed for instance using l'H\^opital's rule. 
\end{proof}

By Lemma~\ref{k limit} we have the following formal limit 
\ant{
 \LL_{k, \Te} \to  \LL_{ \infty, \Te} \mas k \to \infty
 }
where 
\ant{
\LL_{\infty, \Te}  \fy &:= - \rho \log\left(\frac{\Theta}{\rrho}\right) \p_{\rho} \left( \rho \log\left(\frac{\Theta}{\rrho}\right) \p_ \rho \fy \right) + \frac{1}{4} \fy +  \log^2\left(\frac{\Theta}{\rrho}\right)  \frac{1 - 6  \rho^2 + \rho^4}{ (1+ \rho^2)^2} \fy \\
& = -\rho \,  \om^{-1}_{\infty, \Te} \p_{\rho} \left( \rho\, \om^{-1}_{\infty, \Te} \p_ \rho \fy \right) + \frac{1}{4} \fy +  \om^{-2}_{\infty, \Te} \frac{1 - 6  \rho^2 + \rho^4}{ (1+ \rho^2)^2} \fy
}
The idea is to  first study solutions $\fy = \fy (\rho) $ to 
\EQ{ \label{Linf} 
\LL_{\infty, \Te} \fy = \mu^2 \fy
} and then use this information to understand spectral properties of $\LL_{k, \Te}$ for $k$ large. First, we note a few elementary properties of solutions to~\eqref{Linf}. 

\begin{lem} \label{l:Linf0} 
Let $\fy$ be any solution to~\eqref{Linf} with $\varphi_{\mu} \in L^{2}([0, c), \om_{\infty, \Te} \,  \rho^{-1} \, d \rho)$ for some $c \in (0, \Tht]$ and $\mu^{2} \leq 1/4$. Then there exists a number $a \in \bbR$ such that
\begin{align} 
 \fy( \rho)  = & a \rho \log^{-\frac{1}{2}} \left( \frac{\Tht}{\rho} \right) + o \left(\rho \log^{-\frac{1}{2}} \left( \frac{\Tht}{\rho} \right) \right) \mas  \rho \to 0, \label{eq:Linf0:zeroth-order} \\
 \fy'( \rho) = & a \left[ \log^{-\frac{1}{2}} \left( \frac{\Tht}{\rho} \right) + \frac{1}{2} \log^{-\frac{3}{2}} \left( \frac{\Tht}{\rho} \right) \right] + o \left( \log^{-\frac{1}{2}} \left( \frac{\Tht}{\rho} \right) \right) \mas \rho \to 0. \label{eq:Linf0:first-order}
\end{align}
\end{lem}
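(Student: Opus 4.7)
The plan is to adapt the strategy of Lemma~\ref{LL ev 0}, replacing the local model at $r=0$ (a Cauchy--Euler operator with fundamental system $\{r^{k+1/2}, r^{-k+1/2}\}$) with the appropriate local model for $\LL_{\infty,\Te}$ at $\rho = 0$, namely a modified Bessel operator of order zero.

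First, I would change variables to $s := \log(\Tht/\rho)$, which sends $\rho \to 0$ to $s\to \infty$. Since $\rho\p_\rho = -\p_s$ and $\om_{\infty,\Te}^{-1}(\rho) = s$, one has $\rho\om_{\infty,\Te}^{-1}\p_\rho = -s\p_s$, and after dividing by $s^{2}$ the eigenvalue equation $\LL_{\infty,\Te}\fy = \mu^2\fy$ becomes
\[
    -\fy_{ss} - \frac{1}{s}\fy_s + \fy \; =\; \frac{\mu^{2}-\frac{1}{4}}{s^{2}}\,\fy \;+\; \bb(1 - \frac{1-6\Tht^{2}e^{-2s}+\Tht^{4}e^{-4s}}{(1+\Tht^{2}e^{-2s})^{2}}\bb)\fy.
\]
The left-hand side is the modified Bessel equation of order zero, and the right-hand side has the form $R(s)\fy$ with $R(s) = O(s^{-2})$ as $s\to\infty$ (the potential-induced piece is even exponentially small, since $\rho = \Tht e^{-s}$).

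Second, I would produce a fundamental system $\{\fy_-, \fy_+\}$ for the perturbed equation whose asymptotics match those of $K_{0}(s)$ and $I_{0}(s)$, namely
\[
    \fy_-(s) = s^{-1/2}e^{-s}\bb(1+O(s^{-1})\bb), \qquad
    \fy_+(s) = s^{-1/2}e^{s}\bb(1+O(s^{-1})\bb),
\]
with matching expansions for $\fy_{\pm}'$ obtained by differentiating the integral representation. The standard route is to write each $\fy_\pm$ as a perturbation of the explicit Bessel solution via variation of parameters on $[S,\infty)$ for $S$ large, and solve the resulting Volterra equation by contraction in a weighted sup-norm. Reverting to $\rho$ through $s=\log(\Tht/\rho)$ converts these into
\[
    \fy_-(\rho) \sim \rho\,\log^{-1/2}\bb(\frac{\Tht}{\rho}\bb), \qquad \fy_+(\rho) \sim \rho^{-1}\log^{-1/2}\bb(\frac{\Tht}{\rho}\bb),
\]
as $\rho \to 0$.

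Third, an arbitrary solution to \eqref{Linf} decomposes as $\fy = a\fy_- + b\fy_+$, and I would use the $L^{2}$ hypothesis to force $b=0$. Observe that under $s = \log(\Tht/\rho)$ the measure $\om_{\infty,\Te}(\rho)\,\rho^{-1}\,d\rho$ becomes $s^{-1}\,ds$, so $|\fy_-|^{2}s^{-1} = O(s^{-2}e^{-2s})$ is integrable at $s=\infty$ while $|\fy_+|^{2}s^{-1} = O(s^{-2}e^{2s})$ is not. Consequently $b=0$, and \eqref{eq:Linf0:zeroth-order}--\eqref{eq:Linf0:first-order} follow from the leading-order expansion of $\fy_-$ together with the direct computation $\frac{d}{d\rho}\bb[\rho \log^{-1/2}(\Tht/\rho)\bb] = \log^{-1/2}(\Tht/\rho) + \tfrac12 \log^{-3/2}(\Tht/\rho)$.

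The main technical obstacle is justifying the construction of $\fy_\pm$ with sufficient control on the $O(s^{-1})$ remainder and its derivative: the perturbation $R(s)$ is only $O(s^{-2})$, not integrable as an exponentially decaying function, so the Volterra iteration must be set up carefully (weighting by $e^{\mp s}$ and $s^{1/2}$ to get a contraction). The derivative bound is the more delicate point, as it must be strong enough that, once transported back to the $\rho$ variable, the contribution of the remainder to $\fy'$ is genuinely $o(\log^{-1/2}(\Tht/\rho))$ rather than just $O(\log^{-1/2}(\Tht/\rho))$, which is what distinguishes \eqref{eq:Linf0:first-order} from a weaker statement. This is standard Liouville--Green/Levinson-type asymptotic ODE theory, but carrying it out cleanly is the step requiring the most bookkeeping.
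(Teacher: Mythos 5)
Your proposal is correct, but it departs from the paper's route in one instructive way.  The paper works directly in the $\rho$ variable with the model operator $L_{\infty,\Tht}$, which (after multiplication by $\rho^2\log^2(\Tht/\rho)$) keeps the entire $\frac14\varphi$ term of $\calL_{\infty,\Tht}$ on the model side; under your substitution $s=\log(\Tht/\rho)$ that model becomes $\varphi_{ss}+\frac1s\varphi_s-(1+\frac{1}{4s^2})\varphi=0$, i.e.\ the modified Bessel equation of order $\frac12$ rather than order $0$.  The gain from this choice is that the model has an \emph{elementary} exact fundamental system $\{s^{-1/2}e^{-s},\,s^{-1/2}e^{s}\}=\{\rho\log^{-1/2}(\Tht/\rho),\,\rho^{-1}\log^{-1/2}(\Tht/\rho)\}$, so the paper can write down the Green's function and run the Picard iteration without ever invoking the transcendental functions $I_0,K_0$ or their asymptotic expansions; moreover the claimed leading behaviours are then literally the model solutions, not merely their leading asymptotics.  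Your version, dropping $\frac{1}{4s^2}$ into the perturbation, is also perfectly viable since that term is $O(s^{-2})$ and hence integrable, so Levinson-type asymptotics apply; the only cost is the extra appeal to the large-$s$ expansions of $I_0,K_0$.  One remark that de-escalates the technical worry at the end of your write-up: in \eqref{eq:Linf0:first-order} the term $\frac{a}{2}\log^{-3/2}(\Tht/\rho)$ is itself already $o(\log^{-1/2}(\Tht/\rho))$ as $\rho\to 0$, so the statement is equivalent to $\varphi'(\rho)=a\log^{-1/2}(\Tht/\rho)+o(\log^{-1/2}(\Tht/\rho))$; the more delicate ``$o$ versus $O$'' bookkeeping you flag is therefore not actually needed to match the lemma as stated.
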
 
\begin{proof} 
Consider the linear operator
\begin{equation*}
	L_{\infty, \Tht} \varphi := - \frac{1}{\rho \log \left( \frac{\Tht}{\rho} \right)} \rd_{\rho} ( \rho \log \left( \frac{\Tht}{\rho} \right) \rd_{\rho} \varphi) + \frac{1}{4 \rho^{2} \log^{2} \left( \frac{\Tht}{\rho} \right)} \varphi + \frac{1}{\rho^{2}} \varphi
\end{equation*}
with a fundamental system $\set{\rho \log^{-\frac{1}{2}} \left( \frac{\Tht}{\rho} \right), \rho^{-1} \log^{-\frac{1}{2}} \left( \frac{\Tht}{\rho} \right)}$,
and the corresponding Green's function
\begin{align*}
	G(\rho, \tau) 
	= & - \frac{1}{2} \rho^{-1} \log^{-\frac{1}{2}} (\Tht/\rho) \, \tau^{2} \log^{\frac{1}{2}}(\Tht/ \tau)
		+ \frac{1}{2} \rho \log^{-\frac{1}{2}} (\Tht/\rho) \, \log^{\frac{1}{2}} (\Tht/\tau).
\end{align*}
The fundamental system for $L_{\infty, \Tht}$ can be used to approximate solutions to $(\calL_{\infty, \Tht} - \mu^{2}) \varphi = 0$ by the relation
\begin{equation*}
	L_{\infty, \Tht} - \frac{1}{\rho^{2} \log^{2} \left( \frac{\Tht}{\rho} \right)} (\calL_{\infty, \Tht} - \mu^{2}) = \frac{\mu^{2}}{\rho^{2} \log^{2}\left( \frac{\Tht}{\rho} \right)} + O(1) 
\end{equation*}
Indeed, using the variation of constants formula for $L_{\infty, \Tht}$ and using Picard iteration starting from $\rho \log^{-\frac{1}{2}} (\Tht/ \rho)$ and $\rho^{-1} \log^{-\frac{1}{2}} (\Tht/ \rho)$ on an interval of the form $(0, c)$ for sufficiently small $c > 0$, we obtain two solutions $\varphi_{\mu; 1}, \varphi_{\mu; -1}$ to $(\calL_{\infty, \Tht} - \mu^{2}) \varphi = 0$ with the asymptotics
\begin{align*}
	\varphi_{\mu; 1}(\rho) =& \rho \log^{-\frac{1}{2}} \left( \frac{\Tht}{\rho} \right) + o \left( \rho \log^{-\frac{1}{2}} \left( \frac{\Tht}{\rho} \right) \right) \mas \rho \to 0, \\
	\varphi_{\mu; -1}(\rho) =& \rho^{-1} \log^{-\frac{1}{2}} \left( \frac{\Tht}{\rho} \right) + o \left( \rho^{-1} \log^{-\frac{1}{2}} \left( \frac{\Tht}{\rho} \right) \right) \mas \rho \to 0.
\end{align*}
These solutions are clearly linearly independent, and hence they span the set of all solutions to $(\calL_{\infty, \Tht} - \mu^{2}) \varphi = 0$. Since $\varphi_{\mu, -1} \not \in L^{2}([0, c); \omg_{\infty, \Tht} \rho^{-1} d \rho)$, the asymptotics \eqref{eq:Linf0:zeroth-order} follows. The asymptotics \eqref{eq:Linf0:first-order} follows by differentiating the variation of constants formula to obtain  asymptotics for $\varphi'_{\mu; 1}$. \qedhere
\end{proof}

 
 \begin{lem} \label{l:Linf1}  
 Suppose $\fy$ is a solution to~\eqref{Linf} with $\mu^2  = \frac{1}{4}$. Then there exist numbers $a, b \in \R$ so that 
  \EQ{
 \fy( \rho) = a  - b \log \log( \Te/ \rho)  + O( \abs{\log \log( \Te/ \rho)} \log^2( \Te/ \rho)) \mas  \rho \to  \Te
 } 
 If $\fy$ is a resonance or an eigenvalue with $\mu^2 = 1/4$, then $b=0$ and we have 
  \EQ{
 \fy'( \rho) = O(\abs{\log \log ( \Te/ \rho)} \log( \Te/ \rho)) \mas  \rho \to \Te
 }
 Next, suppose $\fy$ is solution to~\eqref{Linf} with $\mu^2 < 1/4$. Then there exist numbers $c_+, c_- \in \R$ so that as $\rho  \to \Te$ we have 
 \EQ{ \label{c34}
\begin{aligned}
 \phi(\rho) =&  c_- \log^{-\sqrt{1/4 - \mu^2}}( \Te/ \rho) + c_+ \log^{\sqrt{ 1/4- \mu^2}}( \Te/ \rho) \\
 	& + O(  \log^{-\sqrt{ 1/4- \mu^2}}( \Te/ \rho) \log^{2} (\Tht/\rho)) 
\end{aligned} }  
 If $\mu^2< 1/4$ is an eigenvalue then we have $c_- = 0$ above. 
\end{lem}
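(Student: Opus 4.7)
The plan is to reduce the eigenvalue equation $\LL_{\infty,\Tht}\fy = \mu^{2}\fy$ near $\rho = \Tht$ to a perturbed Euler-type ODE on the positive half-line in a new coordinate, then carry out a Picard iteration analogous to the one in the proof of Lemma~\ref{l:Linf0}.

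First, introduce $\ell := \log(\Tht/\rho)$, so that $\rho \to \Tht^{-}$ corresponds to $\ell \to 0^{+}$, while $\omg_{\infty,\Tht}^{-1}(\rho) = \ell$ and $\rho\p_{\rho} = -\p_{\ell}$. A direct computation then yields
\begin{equation*}
-\rho\,\omg_{\infty,\Tht}^{-1}\,\p_{\rho}\bigl(\rho\,\omg_{\infty,\Tht}^{-1}\,\p_{\rho}\fy\bigr) = -\ell^{2}\p_{\ell}^{2}\fy - \ell\,\p_{\ell}\fy,
\end{equation*}
so the eigenvalue equation becomes
\begin{equation*}
-\ell^{2}\p_{\ell}^{2}\fy - \ell\,\p_{\ell}\fy + \bigl(\tfrac{1}{4}-\mu^{2}\bigr)\fy + \ell^{2}\, V(\Tht\, e^{-\ell})\,\fy = 0,
\end{equation*}
where $V(\rho) := (1 - 6\rho^{2} + \rho^{4})/(1+\rho^{2})^{2}$. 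Since $V(\Tht e^{-\ell})$ is smooth and bounded on $[0,\ell_{0}]$ for small $\ell_{0}>0$, the last term is a bounded $O(\ell^{2})$ perturbation of the Euler operator $E_{\alpha} := -\ell^{2}\p_{\ell}^{2} - \ell\,\p_{\ell} + \alpha^{2}$ with $\alpha := \sqrt{1/4-\mu^{2}}$.

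Next, identify the fundamental system of $E_{\alpha}$: in the case $\mu^{2} < 1/4$ it is $\{\ell^{\alpha},\ell^{-\alpha}\}$ with Wronskian $-2\alpha/\ell$, while in the resonant case $\mu^{2} = 1/4$ (double indicial root $\alpha=0$) it is $\{1,\log\ell\}$ with Wronskian $1/\ell$. The associated Green's function recasts the perturbed equation as an integral equation, and Picard iteration on a sufficiently small interval $(0,\ell_{0})$ converges because each iterate gains an explicit factor of $O(\ell^{2})$ from the perturbation. For $\mu^{2} < 1/4$ this yields two solutions $\psi_{\pm}(\ell) = \ell^{\pm\alpha}(1 + O(\ell^{2}))$, so a general solution $\fy = c_{-}\psi_{-} + c_{+}\psi_{+}$ has leading part $c_{-}\ell^{-\alpha} + c_{+}\ell^{\alpha}$ with remainder dominated by the large component, namely $O(\ell^{-\alpha+2}) = O(\log^{-\sqrt{1/4-\mu^{2}}}(\Tht/\rho)\log^{2}(\Tht/\rho))$. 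For $\mu^{2} = 1/4$ the iteration starting from $1$ and $\log\ell$ produces solutions with errors $O(\ell^{2})$ and $O(\ell^{2}|\log\ell|)$ respectively, so a linear combination $a - b\log\ell$ has remainder $O(\ell^{2}|\log\ell|) = O(|\log\log(\Tht/\rho)|\log^{2}(\Tht/\rho))$.

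Finally, translate the $L^{2}$ and boundedness conditions into restrictions on the coefficients. Since $\log\log(\Tht/\rho) = \log\ell \to -\infty$ as $\rho \to \Tht^{-}$, a threshold resonance at $\mu^{2}=1/4$ (bounded, non-$L^{2}$ solution) must have $b = 0$; an $L^{2}$-eigenfunction at $\mu^{2} = 1/4$ must likewise have $b = 0$ since the weight $\omg_{\infty,\Tht}\,\rho^{-1}\,d\rho \sim \ell^{-1}\,d\ell$ makes $\log\ell$ fail to lie in $L^{2}_{\loc}$ near $\ell=0$. The derivative bound is then obtained by differentiating the integral representation of $\psi_{\pm}$. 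For $\mu^{2} < 1/4$, the same weight yields $\ell^{-\alpha}\notin L^{2}_{\loc}$ near $\ell=0$, so an $L^{2}$-eigenfunction must satisfy $c_{-} = 0$. The one technical point that will need care is the Picard bookkeeping in the resonant case $\mu^{2}=1/4$: the double indicial root of $E_{0}$ is exactly what produces the extra $|\log\ell|$ factor in the remainder, and avoiding a loss of a power of $|\log\ell|$ when iterating against the Green's function $G_{0}(\ell,\tau) = \tau(\log\tau - \log\ell)$ is the main delicate step in the argument.
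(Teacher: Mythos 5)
Your proposal is correct and is essentially the paper's argument. The only cosmetic difference is your choice of coordinate: you use $\ell = \log(\Tht/\rho)$, which presents the operator as a perturbed Euler operator $-\ell^{2}\p_{\ell}^{2} - \ell\p_{\ell} + (1/4-\mu^{2})$ on $(0,\ell_{0})$, whereas the paper makes the further substitution $s = -\log\ell = -\log\log(\Tht/\rho)$ to reduce to the constant-coefficient operator $-\p_{s}^{2} + (1/4-\mu^{2})$ perturbed by an $O(e^{-2s})$ potential, at which point the asymptotics follow from the same variation-of-parameters computation already used in Lemma~\ref{LL ev 1}. These are the same argument up to the standard Euler change of variables, your fundamental systems $\set{\ell^{\pm\alpha}}$ and $\set{1,\log\ell}$ are exactly $\set{e^{\mp\alpha s}}$ and $\set{1,-s}$, and your identification of the $L^{2}$ weight as $\ell^{-1}\,d\ell$ near the endpoint and the resulting constraints $b=0$ and $c_{-}=0$ match the paper.
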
 
 \begin{proof}
 Here it is convenient to make the change of variables 
 \EQ{ \label{log log} 
 s := - \log \log( \Te/ \rho),  \,  \quad   \frac{\p}{ \p s} =  \rho \log ( \Te / \rho) \frac{\p}{ \p \rho}
  }
 and write $\psi( s) = \fy( \rho)$. In these new variables~\eqref{Linf}  with $\mu^2  \le  1/4$ becomes 
 \EQ{ \label{s eq} 
   - \psi'' + e^{-2s} \frac{1 - 6  \rho(s)^2 + \rho(s) ^4}{ (1+ \rho(s)^2)^2}  \psi = ( \mu^2 - 1/4) \psi, 
   }
   where $ \rho(s) =  \Te e^{- e^{-s}}$. As in Lemma~\ref{LL ev 1} it now follows from the variation of parameters formula that 
   \ant{
   &\psi(s)  = a + b s + O(s e^{-2s}) \mas s \to \infty \mif \mu^2 = 1/4 \\
   &\psi(s)  = c_- e^{s \sqrt{1/4-\mu^{2}}} + c_+ e^{- s \sqrt{1/4-\mu^{2}}} + O( e^{s\sqrt{1/4-\mu^{2}}} e^{-2s} )  \mas s \to \infty \mif  \mu^2 <1/4
   }
   Undoing the change of variables above yields the lemma. 
 \end{proof} 

Next we prove by a comparison argument that for $\Te \le 1$ the spectrum of $\LL_{\infty, \Te}$ is purely absolutely continuous and is given by $[1/4, \infty)$ with $1/4$ neither an eigenvalue nor a resonance. 
\begin{lem}\label{l:Lreg} 
If $\Te \le 1$ then the spectrum $\s( \LL_{\infty, \Te}) = [1/4, \infty)$ is purely absolutely continuous and the threshold $1/4$ is neither an eigenvalue nor a resonance. 
\end{lem}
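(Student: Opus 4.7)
\emph{Plan.} The idea is to adapt the Sturm comparison argument from Section~\ref{subsec:smallLmb} to the limit operator $\LL_{\infty,\Tht}$. The comparator will be the formal $k \to \infty$ limit of the positive function $g_{\Sp^{2}}(\Qu) = \sin\Qu$ used in the proof of Theorem~\ref{S2 no eval}$(i)$, which in the $\rho$-variable reduces to the $k$-independent expression
\[
	f(\rho) := \frac{2\rho}{1+\rho^{2}},
\]
strictly positive on $(0, \Tht]$.

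\emph{Reduction to a half-line Schr\"odinger operator and sign of $V-U$.} Using the substitution $s := -\log\log(\Tht/\rho)$ introduced in Lemma~\ref{l:Linf1}, together with $\p_{s} = \omg_{\infty,\Tht}^{-1} \rho \p_{\rho}$, the eigenvalue problem $\LL_{\infty,\Tht} \psi = \mu^{2} \psi$ becomes
\[
	-\psi''(s) + V_{s}(s)\, \psi = (\mu^{2} - \tfrac{1}{4}) \psi, \qquad V_{s}(s) := e^{-2s}\frac{1 - 6\rho^{2} + \rho^{4}}{(1+\rho^{2})^{2}},
\]
a one-dimensional Schr\"odinger operator on $L^{2}(\R; \ud s) \cong L^{2}((0,\Tht); \omg_{\infty,\Tht}\rho^{-1}\ud \rho)$, with $\rho = \rho(s) := \Tht \exp(-e^{-s})$. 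Since $\rho'(s) = \rho e^{-s}$, a short direct computation from $f = 2\rho/(1+\rho^{2})$ gives
\[
	f''(s) = U_{s}(s)\, f(s), \qquad U_{s}(s) := e^{-2s}\frac{1 - 6\rho^{2} + \rho^{4}}{(1+\rho^{2})^{2}} - e^{-s}\frac{1 - \rho^{2}}{1 + \rho^{2}},
\]
so the crucial difference is
\[
	V_{s}(s) - U_{s}(s) = e^{-s}\,\frac{1 - \rho^{2}}{1 + \rho^{2}}.
\]
For $\Tht \le 1$ we have $\rho(s) \le \Tht \le 1$, whence $V_{s} - U_{s} \ge 0$ with strict inequality at a.e.\ $s \in \R$.

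\emph{Sturm contradiction.} Suppose toward a contradiction that $\LL_{\infty,\Tht}$ has an eigenvalue $\mu^{2} \le \tfrac{1}{4}$ or a threshold resonance at $\mu^{2} = \tfrac{1}{4}$; by oscillation theory and min-max we may take the corresponding function $\psi_{\mu}$ to be strictly positive. Multiplying the eigenvalue equation by $f$, integrating over $[-N, R]$, and integrating by parts twice using $f'' = U_{s} f$ yields
\[
	(\mu^{2} - \tfrac{1}{4}) \int_{-N}^{R} \psi_{\mu} f \,\ud s = \bigl[ \psi_{\mu} f' - \psi_{\mu}' f \bigr]_{-N}^{R} + \int_{-N}^{R} (V_{s} - U_{s})\, \psi_{\mu} f \,\ud s.
\]
The asymptotics in Lemma~\ref{l:Linf0} (which give $\psi_{\mu}(s), \psi_{\mu}'(s) \to 0$ super-exponentially) and the bounds $f(s) \sim 2\Tht e^{-e^{-s}}$, $f'(s) = O(e^{-s} e^{-e^{-s}})$ as $s \to -\infty$ show that the boundary term at $-N$ vanishes as $N \to \infty$. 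Rearranging and using $\mu^{2} - \tfrac{1}{4} \le 0$ gives, in the limit $N \to \infty$,
\[
	\psi_{\mu}'(R) f(R) - \psi_{\mu}(R) f'(R) \ge \int_{-\infty}^{R} (V_{s} - U_{s})\, \psi_{\mu} f \,\ud s,
\]
and the right-hand side is strictly positive and monotone in $R$, hence bounded below by some $\delta > 0$ once $R$ is large. On the other hand, using Lemma~\ref{l:Linf1}: in the eigenvalue case both $\psi_{\mu}(R)$ and $\psi_{\mu}'(R)$ decay exponentially, while in the threshold-resonance case the coefficient $b$ vanishes, forcing $\psi_{\mu}'(R) = O(s e^{-2s}) \to 0$ and $\psi_{\mu}(R)$ to be bounded. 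Combined with $f(R) \to 2\Tht/(1+\Tht^{2}) < \infty$ and $f'(R) = O(e^{-R})$, this gives that the left-hand side tends to zero, a contradiction.

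\emph{Essential spectrum and main obstacle.} The identification $\sgm(\LL_{\infty,\Tht}) = [\tfrac{1}{4},\infty)$ follows from Weyl's theorem, since $V_{s}$ is confining as $s \to -\infty$ and decays exponentially as $s \to +\infty$; absolute continuity and the absence of embedded eigenvalues in $(\tfrac{1}{4},\infty)$ follow from a standard Kato-type argument for exponentially decaying potentials, along the lines of Proposition~\ref{no neg spec}. The step I expect to require the most care is the boundary-term bookkeeping at $s \to +\infty$ in the resonance case, since there $\psi_{\mu}$ itself does not decay: one really needs both the $b = 0$ assertion in Lemma~\ref{l:Linf1} (to kill $\psi_{\mu}'(R) f(R)$) and the exponential smallness of $f'(R)$ (to kill $\psi_{\mu}(R) f'(R)$), and it is this interplay of the two asymptotic regimes that makes $\Tht = 1$ the sharp borderline in the argument.
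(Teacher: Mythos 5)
Your argument is correct and is essentially the paper's own proof, with the only difference being cosmetic: you pass to the variable $s=-\log\log(\Te/\rho)$ already used in Lemma~\ref{l:Linf1}, turning $\LL_{\infty,\Te}-\tfrac14$ into the one-dimensional Schr\"odinger operator $-\p_s^2+V_s$, whereas the paper stays in the $\rho$-variable and records the comparison identity as $(\LL_{\infty,\Te}-\tfrac14)\Phi = \log(\Te/\rho)\,\tfrac{1-\rho^2}{1+\rho^2}\,\Phi$ for $\Phi(\rho)=\rho/(1+\rho^2)$. Your $f=2\rho/(1+\rho^2)=2\Phi$ is the same comparator up to a constant, the inequality $V_s-U_s=e^{-s}\tfrac{1-\rho^2}{1+\rho^2}\ge 0$ is the same key computation, and the boundary-term bookkeeping using Lemmas~\ref{l:Linf0} and~\ref{l:Linf1} matches the paper's.
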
 
\begin{proof}
The proof proceeds via a comparison argument in the same spirit as the proof of Theorem~\ref{S2 no eval}$(i)$. We note that the space $L^2 $ is defined as $L^2( [0, \Te);  \om_{\infty, \te} \rho^{-1} \, d \rho )$. 

 Let $\mu \in \C$ with $\mu^2 \le \frac{1}{4}$. Suppose that  $\fy_\mu$ is a solution to
\EQ{\label{phi mu}
\LL_{\infty, \Te} \fy_\mu = \mu^2 \fy_{\mu}.
}
If  $\mu^2 \le 1/4$ is an eigenvalue, we assume  that it is the smallest eigenvalue, and by a variational principle, we can further assume that the corresponding eigenfunction $\fy_{\mu} \in L^2$ is unique, (i.e., $\mu^2$ is simple) and strictly positive. If $\mu^2 = \frac{1}{4}$ and is not an eigenvalue, we  assume that $\fy_{\mu}$ is a non-negative threshold resonance. 

Next, we introduce the function 
\EQ{ \label{Phi}
 \Phi( \rho)  = \frac{ \rho}{ 1 + \rho^2}
 }
We note that $\Phi$ solves the equation 
\EQ{ \label{Phi eq} 
 (\LL_{\infty, \Te} - 1/4) \Phi =  \log( \Te/ \rho) \frac{ 1- \rho^2}{ 1+ \rho^2} \Phi
 }
 As an aside, we remark that the motivation for introducing the function $\Phi( \rho)$ comes from the role that the function $\sin Q_{\la, k}$ played in the proof of Theorem~\ref{S2 no eval}$(i)$ and the fact that we have the point-wise  limit 
 \ant{
 \lim_{k \to \infty}   \sin(Q_{\la, k} ( \rho)) =  \Phi( \rho)
 }
in the $\rho$ coordinate. 

Let $ \Te \le 1$ and let $0 < \e < R < \Te \le 1$. Integrating by parts we obtain, 
\ant{
  0 &\ge \left( \mu^2 - 1/4 \right) \int_\e^R \fy_\mu( \rho) \,  \Phi( \rho)  \,  \om_{\infty, \Te}( \rho) \rho^{-1} \, d \rho   \\
  & =  \int_\e^R ((\LL_{\infty, \Te}- 1/4) \fy_\mu)( \rho) \,  \Phi( \rho)  \,  \om_{\infty, \Te}( \rho) \rho^{-1} \, d \rho \\
  & =  \int_{\e}^R  \phi_ \mu( \rho) ((\LL_{\infty, \Te}- 1/4) \Phi)( \rho)  \, \om_{\infty, \Te}( \rho) \rho^{-1} \, d \rho  \\
  & \quad + \om^{-1}(R) R \,  \Phi'( R) \fy_\mu(R) -  \om^{-1}( \e) \e \,  \Phi'( \e) \fy_\mu(\e) \\
  & \quad  -   \om^{-1}(R) R \,  \Phi( R) \fy_\mu'(R) +  \om^{-1}( \e) \e \,  \Phi( \e) \fy_\mu'(\e)
  }
  Using~\eqref{Phi}~\eqref{Phi eq},  and the definition of $\om_{ \infty, \Te}$ the above becomes 
  \ant{ 
  \int_{\e}^R  \frac{1 - \rho^2}{(1+ \rho^2)^2}  \phi_ \mu( \rho)  \, d \rho &\le  -  \om^{-1}(R) R \,  \Phi'( R) \fy_\mu(R) +  \om^{-1}( \e) \e \,  \Phi'( \e) \fy_\mu(\e)  \\
  & \quad +  \om^{-1}(R) R \,  \Phi( R) \fy_\mu'(R)  -   \om^{-1}( \e) \e \,  \Phi( \e) \fy_\mu'(\e)
  }
  Note that since we are assuming that $\Te \le 1$ and we always have $\rho < \Te \le 1$ we know that the left-hand side above strictly positive and increasing as $ \e  \to 0$ and as $R \to \Te$. Hence we can bound the left-hand side below by a fixed constant $\de>0$ which gives 
  \ant{
   0  < \de  &\le   -  \om^{-1}(R) R \,  \Phi'( R) \fy_\mu(R) +  \om^{-1}( \e) \e \,  \Phi'( \e) \fy_\mu(\e)  \\
  & \quad +  \om^{-1}(R) R \,  \Phi( R) \fy_\mu'(R)  -   \om^{-1}( \e) \e \,  \Phi( \e) \fy_\mu'(\e)
  }
  Finally, we note that the terms involving $\e$ on the right-hand side above both tend to $0$ as $ \e \to 0$ due to the $L^2([0, c);  \om_{ \infty, \Te} \, \rho^{-1}  \, d \rho)$ condition on $\phi_\mu$ and Lemma~\ref{l:Linf0}. Also, the terms involving $R$ on the right-hand side both tend to $0$ as $ R \to \Te$. If $\mu^2< 1/4$ is an eigenvalue this is due to the fact that  $ \fy_\mu \in L^2([0,  \Te);  \om_{ \infty, \Te} \, \rho^{-1}  \, d \rho)$. If $\mu^2 = 1/4$ is either an eigenvalue or a resonance, then this is due to Lemma~\ref{l:Linf1}. This gives a contradiction by taking $\e \to 0$ and $R \to \Te$ above. 
  
  Finally, we note that it remains to rule out embedded eigenvalues in the continuous spectrum. This is straightforward and follows from the same argument used below in the proof of Proposition~\ref{no neg spec}$(ii)$, but here using the change of variables~\eqref{log log} and the reduction to an equation of the form~\eqref{s eq} with $\mu^2> 1/4$. We omit the details. 
 \end{proof} 
 
 Finally we are ready to complete the proof of Theorem~\ref{S2 no eval}$(ii)$. 
 
 \begin{proof}[Proof of Theorem~\ref{S2 no eval}$(ii)$]
Arguing as in the beginning of Section~\ref{s:k} where we showed that the spectrum of $\LL_{\la, k}$ is given by $[1/4 , \infty)$ with no eigenvalues or resonances at $1/4$ for all $\la < 1 + \de(k)$, we observe that there exists $\Te_0 >1$ so that $\LL_{\infty, \Te}$ has spectrum $\s( \LL_{\infty, \Te}) = [1/4,  \infty)$ which is purely absolutely continuous with no eigenvalues or resonance at the edge $1/4$ for all $\Te <  \Te_0$. This is simply a manifestation of the fact that not having a eigenvalue $\mu^2 \le 1/4$ or a resonance at $1/4$ is an open condition in $\Te$. 

Next, we would like to exploit the formal convergence $\LL_{k, \Te} \to  \LL_{\infty, \Te}$ to prove the existence of the number $\Te_*>1$ as in the statement of Theorem~\ref{S2 no eval}$(ii)$. 

Suppose that Theorem~\ref{S2 no eval} is false. Then we can find a sequence $k_n \to \infty$,  a decreasing sequence of numbers  $\Te_{n}  \searrow 1$, a sequence $\mu^2_n \le 1/4$, and a sequence of  smooth \emph{bounded, positive solutions} $\fy_n \in L^{\infty}([0,\Te_n))$  to 
\ant{
\LL_{k_n, \Te_{n}} \fy_n = \mu_n^2 \fy_n. 
}
\begin{rem}\label{r:bounded} 
We remark here that $\LL_{k, \Te}$ has an eigenvalue $\mu_0^2\le 1/4$ \emph{or} a resonance at $1/4$ if and only if there exists a bounded positive solution $\fy$ to $\LL_{k, \Te} \phi  = \mu_1^2 \phi$ where we possibly have $\mu_1 < \mu_0$. Indeed, if $\mu^2_0$ is an eigenvalue we can take $\mu_1^2$ to be the smallest eigenvalue, and by a variational principle we can assume that $\mu_1^2$ is simple, and the corresponding eigenfunction $\fy$ is strictly positive. This eigenfuction is bounded by Lemma~\ref{LL ev 0} and Lemma~\ref{LL ev 1}. If $\mu_0^2 = 1/4$ is a resonance, then Sturm's oscillation theorem together with Lemma~\ref{LL ev 0} and Lemma~\ref{LL ev 1} ensure that it is strictly positive and bounded. For these reasons, we can guarantee that our sequence $\fy_n$ above can be taken to be bounded positive functions. Note that that same remark holds for the operator $\LL_{\infty, \Te}$ using Lemma~\ref{l:Linf0} and Lemma~\ref{l:Linf1}. 
\end{rem} 
 
 Returning to the proof, we can renormalize the sequence $\phi_n$ in $L^{\infty}$ so that $\phi_n( \rho) \le 1$ for all $\rho \in [0, \Te_n)$ and there is a sequence of points $\rho_n \in [0, \Te_n]$ so that $\phi_n( \rho_n)  =1$ for each $n$. Passing to a subsequence, we can assume that $\rho_n \to \rho_* \in [0, 1]$ and moreover that our eigenvalues (or resonances) $\mu_n^2 \to \mu_\infty^2  \in [0, 1/4]$ (note that the lower bound on the $\mu_\infty^2$ can be ensured due to the fact that $\LL_{k, \Te}$ has no negative spectrum, see Proposition~\ref{no neg spec} below). The outline for the remainder of the proof is as follows
 \begin{itemize}
 \item[Step $1$.] Let $J$ be any compact subset $J \Subset (0, 1)$. We prove that the sequence $\{\fy_n\}$ is equicontinuous on $J$. We can then find a subsequence, and a bounded continuous function $\fy_{\infty}$ defined on $(0, 1)$ so that $\fy_n \to \fy_{\infty}$ uniformly on  each $J \Subset (0, 1)$. 
 \item[Step $2$.] We show that $\fy_{\infty}$ solves $\LL_{\infty, 1} \fy_{\infty}  = \mu_\infty^2 \fy_{\infty}$. 
 \item[Step $3$.] We show that $\fy_{\infty}$ is not identically $ \equiv 0$, which together with Step $2$ and Remark~\ref{r:bounded} proves that $\fy_{\infty}$ is either and eigenvalue or threshold resonance for $\LL_{\infty, 1}$, which is impossible  by  Lemma~\ref{l:Lreg}. This is our contradiction. 
 \end{itemize}

First we prove Step $1$. Fix $J \Subset (0, 1)$ and let $\chi$ be a smooth non-negative function  compactly supported function on $(0, 1$) that is identically $=1$ on  $J$. We multiply the equation for $\fy_{n}$, i.e., $\LL_{k_n, \Te_n} \fy_n = \mu^2_n \fy_n$,  by $\fy_{n}\chi$ and integrate by parts (with the measure $\omega_{k_n, \Theta_n}(\rho) \rho^{-1}\, d\rho$) to get
\begin{align*}
& \hskip-1em
\int_0^{1}|\partial_\rho\fy_{n}(\rho)|^2\chi(\rho)\omega_{k_n, \Te_n}^{-1} \rho d\rho \\ 
= &\frac{1}{2}\int_0^{1}\partial_\rho \big(\omega_{k_n, \Te_n}^{-1}\rho)\partial_\rho\chi(\rho) \big) \fy_{n}^2(\rho)d\rho
+(\mu_n^2- \frac{1}{4})\int_0^{1}\fy_{n}^2(\rho)\chi(\rho)\omega_{k_n, \Te_n}(\rho) \rho^{-1}d\rho \\ 
  +  &\frac{1}{4k^2} \int_0^1 \fy_{n}^2(\rho)\chi(\rho) \omega_{k_n, \Te_n}^{-1}(\rho) \, \rho^{-1}d \rho    - \int_0^{1} \frac{1- 6 \rho^2 + \rho^4}{(1+ \rho^2)^2} \fy_{n}^2(\rho)\chi( \rho) \omega_{k_n, \Te_n}^{-1}(\rho) \, \rho^{-1}d \rho.
\end{align*}
Now note that we can find constants $c= c(J), C= C(J)$ so that  $$0 < c \le \abs{\omega_{k_n, \Theta_n}(\rho)} \le C < \infty$$ on $J$ uniformly in $n$.  Since $|V_{k_n}|,~|\partial_\rho\omega^{-1}_{k_n}(\Theta_n,\rho)|,$ and $\abs{\fy_{k_n}}$ are also uniformly bounded on $J$,  we conclude that
\ant{
\int_{J}|\partial_\rho \fy_{n}(\rho)|^2d\rho\lesssim_J 1, 
}
uniformly in $n.$ It follows from an application of the Fundamental Theorem of Calculus and  the Cauchy-Schwarz inequality that the sequence $\fy_{n}$ is equicontinuous on $J$ for any fixed  $J  \Subset (0, 1)$. Moreover, the sequence is uniformly bounded on $(0, 1)$.  It follows from the Arzela-Ascoli that after passing to a subsequence $\fy_n$ converges to a bounded continuous function $\fy_{\infty}$ defined on $(0,1)$.  Moreover, the convergence is uniform on every compact subinterval of $(0,1)$. This proves Step $1$. 

To prove Step $2$, we let $\chi \in C^{\infty}_0(0,1)$ be a test function. Then we have 
\ant{
\int_0^1 &\LL_{\infty, 1} \chi \fy_{\infty} \om_{ \infty, 1}  \frac{d \rho}{ \rho} = \int_0^1 \LL_{\infty, 1} \chi ( \fy_{\infty}- \fy_n) \om_{ \infty, 1}  \frac{d \rho}{ \rho}  + \int_0^1 \LL_{\infty, 1} \chi \fy_{n} \om_{ \infty, 1}  \frac{d \rho}{ \rho}  \\
 &= I + \int_0^1 (\LL_{\infty, 1} \chi) \fy_{n} (\om_{ \infty, 1} - \om_{k_n, \Te_n})  \frac{d \rho}{ \rho}  +  \int_0^1 (\LL_{\infty, 1} \chi) \fy_{n} \om_{ k_n, \Te_n}  \frac{d \rho}{ \rho}  \\
 & = I + II + \int_0^1 ((\LL_{\infty, 1} - \LL_{k_n, \Te_n}) \chi) \fy_{n} \om_{ k_n, \Te_n}  \frac{d \rho}{ \rho}  + \int_0^1 (\LL_{k_n, \Te_n} \chi) \fy_{n} \om_{ k_n, \Te_n}  \frac{d \rho}{ \rho}   \\
 & = I + II + III +  \mu_n^2 \int_0^1 \chi  \fy_{n} \om_{ k_n, \Te_n}  \frac{d \rho}{ \rho}  
}
where due to the support properties of $\chi$ we have 
\ant{
&I:= \int_0^1 \LL_{\infty, 1} \chi ( \fy_{\infty}- \fy_n) \om_{ \infty, 1}  \frac{d \rho}{ \rho}  = o_n(1) \mas n \to \infty\\
&II:=  \int_0^1 (\LL_{\infty, 1} \chi) \fy_{n} (\om_{ \infty, 1} - \om_{k_n, \Te_n})  \frac{d \rho}{ \rho}  = o_n(1) \mas n \to \infty\\
&III:=  \int_0^1 (( \LL_{\infty, 1} - \LL_{k_n, \Te_n}) \chi) \fy_{n} \om_{ k_n, \Te_n}  \frac{d \rho}{ \rho}  = o_n(1) \mas n \to \infty
}
Finally, note that 
\ant{
\mu_n^2 \int_0^1& \chi  \fy_{n} \om_{ k_n, \Te_n}  \frac{d \rho}{ \rho}   =  ( \mu_n^2 - \mu_{\infty}^2)  \int_0^1 \chi  \fy_{n} \om_{ k_n, \Te_n}  \frac{d \rho}{ \rho}  + \mu_{\infty}^2 \int_0^1 \chi  \fy_{n} \om_{ k_n, \Te_n}  \frac{d \rho}{ \rho}  \\
& = o_n(1) + \mu_{\infty}^2 \int_0^1 \chi  (\fy_{n} \om_{ k_n, \Te_n} -  \fy_{\infty} \om_{\infty, 1}) \frac{d \rho}{ \rho}  + \mu_{\infty}^2 \int_0^1 \chi  \fy_{\infty} \om_{\infty, 1}  \frac{d \rho}{ \rho}   \\
& = o_n(1)+ \mu_{\infty}^2 \int_0^1 \chi  \fy_{\infty} \om_{\infty, 1}  \frac{d \rho}{ \rho}
}
Hence, 
\ant{
\int_0^1 &\LL_{\infty, 1} \chi \fy_{\infty} \om_{ \infty, 1}  \frac{d \rho}{ \rho} =  \mu_{\infty}^2 \int_0^1 \chi  \fy_{\infty} \om_{\infty, 1}  \frac{d \rho}{ \rho} + o_n(1) \mas n \to \infty
}
which proves that $\fy_{\infty}$ is a weak solution of $\LL_{\infty, 1} \fy_{\infty} = \mu_{\infty}^2 \fy_{\infty}$. Finally, since all the coefficients of $\LL_{\infty, 1} $ are bounded on any compact subinterval of $(0, 1)$, we can conclude that $\fy_{\infty}$ is in fact a strong solution. 

Lastly, we prove Step $3$, i.e., that $\fy_{\infty} \not \equiv 0$. Recall that we have chosen points $\rho_n \to \rho_* \in [0, 1]$ so that $\fy_n( \rho_n) = 1  =  \sup_{\rho \in (0, \Te_n]}   \fy_n(\rho)$. If the supremum is achieved at the endpoint $\Te_n$ then $\fy_n( \rho_n) = \lim_{\rho \to \Te_n} \fy( \rho)$ is interpreted as a limit. Note that if $\rho_* \in (0, 1)$, then we are done, since then we can find a compact set $J \Subset (0, 1)$ so that $\rho_* \in J$, and so that $\rho_n \in J$ for all $n$ large. Since we know that $\fy_n \to \fy_{\infty}$ uniformly on $J$, and since $\fy_n( \rho_n)  = 1$, we can conclude that $\fy \not \equiv 0$. 

We are thus left with the remaining possibilities $\rho_*  = 0$ and $\rho_* = 1$. We can easily rule out the first scenario. Indeed, recall that $\fy_n$ satisfies 
\ant{
 \om^{-1}_{k_n, \Te_n}   \rho \p_{\rho} \left(  \om^{-1}_{k, \Te_n} \rho \p_ \rho \fy_n\right)  =  \left( 1/4 - \mu_n^2 \right) \fy_n  +     \om^{-2}_{k_n, \Te_n}  \left(\frac{1 - 6  \rho^2 + \rho^4}{ (1+ \rho^2)^2} - \frac{1}{4k_n^2} \right) \fy_n
}
Multiplying both sides by $\om_{k_n, \Te_n} \rho^{-1}$,  integrating from $0$ to $\rho$, and then multiplying again by $\om_{k_n, \Te_n} \rho^{-1}$ yields, 
\begin{multline*}
\p_ \rho \fy_n( \rho) = \left( 1/4 - \mu_n^2 \right) \frac{ \om_{k_n, \Te_n}}{ \rho}\int_0^{\rho} \fy_n  \om_{k_n, \Te_n} \frac{d \tau}{ \tau}  \\ +  \frac{\om_{k_n, \Te_n}}{ \rho}  \int_0^{\rho}  \om^{-2}_{k_n, \Te_n}  \left(\frac{1 - 6  \tau^2 + \tau^4}{ (1+ \tau^2)^2} - \frac{1}{4k_n^2} \right) \fy_n \om_{k_n, \Te_n}\frac{d \tau}{ \tau}
\end{multline*}
Now note that the right-hand side above is strictly positive on an interval $(0, \e]$ where $\e>0$ can be chosen \emph{independently of $n$}. Indeed, the right-hand side is positive as long as 
\ant{
\frac{1 - 6  \tau^2 + \tau^4}{ (1+ \tau^2)^2} - \frac{1}{4k_n^2}  > 0
}
and we can thus take, say $\e = 1/10$. This means that $\phi_n$ is strictly increasing on the interval $[0, \e]$ for all $n$ and hence we can ensure that $\rho_n \ge \e$ for each $n$. Hence $\rho_* \ge \e$. 

Now suppose that $  \rho_n \to \rho_*  = 1$. Proceeding as above, but this time integrating from $\rho$ to $\Te_n$ we have 
\begin{multline*}
\p_ \rho \fy_n( \rho) =  - \left( 1/4 - \mu_n^2 \right) \frac{ \om_{k_n, \Te_n}}{ \rho}\int_{\rho}^{\Te_n} \fy_n  \om_{k_n, \Te_n} \frac{d \tau}{ \tau}  \\  -   \frac{\om_{k_n, \Te_n}}{ \rho}  \int_{\rho}^{\Te_n}  \om^{-2}_{k_n, \Te_n}  \left(\frac{1 - 6  \tau^2 + \tau^4}{ (1+ \tau^2)^2} - \frac{1}{4k_n^2} \right) \fy_n \om_{k_n, \Te_n}\frac{d \tau}{ \tau}
\end{multline*}
Since the first term  on the right-hand side above is nonpositive we have 
\ant{
\p_ \rho \fy_n( \rho)  \le -   \frac{\om_{k_n, \Te_n}}{ \rho}  \int_{\rho}^{\Te_n}  \om^{-2}_{k_n, \Te_n}  \left(\frac{1 - 6  \tau^2 + \tau^4}{ (1+ \tau^2)^2} - \frac{1}{4k_n^2} \right) \fy_n \om_{k_n, \Te_n}\frac{d \tau}{ \tau}
}
Next, we can choose $\rho$ close enough to $1$, e.g., take $\rho \ge 3/4$, and $\Te_n$ close enough to $1$, so that 
\ant{
\frac{1 - 6  \tau^2 + \tau^4}{ (1+ \tau^2)^2} - \frac{1}{4k_n^2} \le 0  \mand  \abs{\frac{1 - 6  \tau^2 + \tau^4}{ (1+ \tau^2)^2} - \frac{1}{4k_n^2}}  \le 2 \mfor  \tau  \in [3/4, \Te_n]
}
Using the above along with the fact that $ \fy_n \le 1$  and Lemma~\ref{k limit} we have for all $\rho \in [3/4,   \Te_n]$ that 
\ant{
\p_ \rho \fy_n( \rho)  \le  \frac{C_1}{ \log( \Te_n/ \rho)} \int_ \rho^{\Te_n}  \om^{-1}_{k_n, \Te_n}( \tau) \,  d \tau
}
Note also for $\tau  \ge 3/4$ and $\Te_n$ close enough to $1$ we can find a uniform in $n$ constant $C_2>0$ so that $\om^{-1}_{k_n, \Te_n}( \tau)  \le C \log( \Te_n/ \tau)$. Hence, 
\EQ{ \label{dfy}
\p_ \rho \fy_n( \rho)  \le  \frac{C_1C_2}{ \log( \Te_n/ \rho)} \int_ \rho^{\Te_n}   \log( \Te_n/ \tau) \, d \tau \le C' \abs{ \Te_n - \rho}
}
Hence we can find $R<1$ close enough to $1$ and $N$ large enough so we have $\rho_n \in [R, 1]$  for all $n \ge N$, and in addition by~\eqref{dfy} we can guarantee that  for $ \rho \in [R, 1]$ we have 
\ant{
 \fy_n( \rho) \ge 1/2, \quad \forall  \rho \in [R, 1].
 }
Hence $\fy_{\infty}( \rho)  \ge 1/2$  for $ \rho \in [R, 1)$ and therefore is not identically $0$. This completes the proof  of Step $3$ and therefore also of Theorem~\ref{S2 no eval}$(ii)$.
\end{proof}
\subsection{No negative spectrum and no embedded eigenvalues} \label{s:noneg}

Before we consider the existence of gap eigenvalues in the next section, we first show that any eigenvalue of the operator  $\calL_{\VV}$, with $\VV=V_{\la,k}$ or $W_\la,$ must occur in the spectral gap $(0, 1/4)$. The following proposition holds for all $\lmb \in [0, \infty)$ and $k \ge 2$. 

\begin{prop} \label{no neg spec}
With $\VV=V_{\la,k}$ or $W_\la,$ the following statements concerning $\calL_{\VV}$ hold.
\begin{enumerate}[(i)]
\item For every $\lmb \geq 0$, the spectrum of $\calL_{\VV}$ does not contain any non-positive reals, i.e.,
\begin{equation*}
	\sgm(\calL_{\VV}) \cap (-\infty, 0] = \emptyset.
\end{equation*}
\item There does not exist any eigenvalue in $[\frac{1}{4}, \infty)$.
\end{enumerate}
\end{prop}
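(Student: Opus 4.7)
The overall strategy is a Bogomol\'nyi-type factorization of the linearized operator for part~(i), mirroring the energy identity used in the proof of Proposition~\ref{hm prop}, together with a standard ODE asymptotics argument at $r = \infty$ for part~(ii). The main obstacle in part~(i) is that the half-line operator $\calL_{\VV}$ has an effective potential which is sign-indefinite for large $\lmb$; the idea is to descend to the natural $\Hp^{2}$-side formulation of the linearization, where a clean factorization is available regardless of $\lmb$.

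\textbf{Part (i).} First introduce the $\Hp^{2}$-side linearization at $Q$,
\[
L_{\Hp^{2}} := -\p_{rr} - \coth r\, \p_{r} + \frac{k^{2}\big((g'(Q))^{2} + g(Q) g''(Q)\big)}{\sinh^{2} r},
\]
with $(k, g, Q) = (k, g_{\Sp^{2}}, Q_{\la,k})$ for the wave map case and $(2, g_{\YM}, Q_{\YM,\la})$ for Yang--Mills. The key computation, which follows by direct expansion using only the harmonic map relation $Q_{r} = k\, g(Q)/\sinh r$, is the factorization
\[
L_{\Hp^{2}} = A^{\ast} A, \qquad A\phi := \phi_{r} - \frac{k\, g'(Q)}{\sinh r}\phi,
\]
on radial $L^{2}(\Hp^{2}, \sinh r\, dr)$, with the adjoint taken in that measure. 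This is the infinitesimal counterpart of the Bogomol'nyi identity in Proposition~\ref{hm prop} and implies $L_{\Hp^{2}} \ge 0$. The correspondence $\phi_{\Hp^{2}} = \sinh^{k} r\cdot u$ from Section~\ref{sec:pre} is a unitary equivalence between radial $L^{2}(\Hp^{2k+2})$ and radial $L^{2}(\Hp^{2})$ that intertwines $H_{\VV}$ with $L_{\Hp^{2}}$; composing with the conjugation \eqref{4 to 1}--\eqref{conj} transfers nonnegativity from $L_{\Hp^{2}}$ to $\calL_{\VV}$, giving $\sgm(\calL_{\VV}) \subseteq [0, \infty)$. To rule out $0$ as an eigenvalue, if $\calL_{\VV}\phi = 0$ with $\phi \in L^{2}$ then after unwinding the conjugations $L_{\Hp^{2}}\phi_{\Hp^{2}} = 0$ with $\phi_{\Hp^{2}} \in L^{2}(\Hp^{2})$, hence $A\phi_{\Hp^{2}} = 0$ from the factorization. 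Solving the first-order ODE $A\phi_{\Hp^{2}} = 0$ yields $\phi_{\Hp^{2}} = c\, g(Q)$, but $g(Q(r)) \to g(Q(\infty)) \neq 0$ as $r \to \infty$ for every $\lmb > 0$, so $g(Q) \notin L^{2}(\Hp^{2})$ and $c = 0$ (the case $\lmb = 0$ is trivial since $\VV \equiv 0$).

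\textbf{Part (ii).} For $\mu^{2} \ge 1/4$, rewrite the eigenvalue equation as
\[
-\phi'' + V_{\mathrm{eff}}\phi = \kpp^{2}\phi, \qquad \kpp^{2} := \mu^{2} - 1/4 \ge 0,
\]
with $V_{\mathrm{eff}}(r) := (k^{2} - 1/4)/\sinh^{2} r + \VV$, which decays exponentially as $r \to \infty$. A standard variation-of-constants argument against the asymptotic fundamental system -- namely $\{\cos\kpp r, \sin\kpp r\}$ when $\kpp > 0$, or $\{1, r\}$ when $\kpp = 0$ (cf.~Lemma~\ref{LL ev 1}) -- yields for every solution $\phi$ an expansion of the form $\phi(r) = \alpha\, e_{+}(r) + \beta\, e_{-}(r) + o(1)$ as $r \to \infty$, where $(\alpha, \beta) \in \bbR^{2}$ parametrizes the entire two-dimensional solution space via its leading-order behavior at infinity. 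Since none of $\cos\kpp r$, $\sin\kpp r$, $1$, or $r$ lies in $L^{2}$ at infinity, an $L^{2}$-solution forces $\alpha = \beta = 0$ and hence $\phi \equiv 0$, ruling out eigenvalues in $[1/4, \infty)$.
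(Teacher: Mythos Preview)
Your proof is correct. Part~(ii) is essentially identical to the paper's argument. Part~(i), however, takes a genuinely different route.

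The paper proves (i) by a Sturm--Wronskian comparison: it introduces the explicit zero-energy solution $\beta_{0} = \sinh^{1/2} r\,\partial_{\lmb} Q$ (denoted $\zeta_{0}^{\lmb}$ or $\eta_{0}^{\lmb}$), and for a putative positive ground state $\phi_{\mu}$ with $\mu^{2}\le 0$ integrates the eigenvalue equation against $\beta_{0}$ to obtain the Wronskian identity
\[
\mu^{2}\int_{0}^{R}\beta_{0}\,\phi_{\mu}\,dr = -\phi_{\mu}'(R)\beta_{0}(R) + \phi_{\mu}(R)\beta_{0}'(R),
\]
whose two sides have incompatible signs for large $R$. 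Your argument instead lifts the problem to the $\bbH^{2}$-side operator $L_{\bbH^{2}}$ and exhibits the factorization $L_{\bbH^{2}} = A^{\ast}A$ with $A = \partial_{r} - k\,g'(Q)/\sinh r$, from which nonnegativity is immediate and the kernel is identified with $\ker A = \bbR\cdot g(Q)\notin L^{2}(\bbH^{2})$. The two approaches are closely related under the surface: your zero-mode $g(Q)$ and the paper's $\partial_{\lmb}Q$ are proportional (indeed $\partial_{\lmb}Q = \tfrac{k}{\lmb}\,g(Q)$ in both models), so both arguments pivot on the same object. What your factorization buys is a one-line proof of $\sigma(\calL_{\VV})\subseteq[0,\infty)$ valid for \emph{all} $\lmb$, bypassing the comparison step entirely; what the paper's approach buys is that it stays on the half-line throughout and reuses machinery (the Wronskian comparison with an explicit positive solution) already set up for Theorems~\ref{S2 no eval} and~\ref{YM no eval}.
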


 Before proceeding with the proof we introduce two additional ingredients. 
 We define $\z_0^{\la}$ to 
 be the unique $L^2_{\loc}$ solution to $\Lline \zeta = 0$, which is obtained by differentiating $Q_{\la, k}( \rho)$ with respect to~$\la$. Indeed, we have 
\EQ{ \label{zdef}
\zeta_0^{\la}(r):=\sinh^{\frac{1}{2}}r\, \p_{\la}\Qu(r)=\frac{2k\lambda^{k-1}\tanh^{k}(r/2)\sinh^{1/2}r}{1+\lambda^{2k}\tanh^{2k}(r/2)}. 
}
We also require the analog of $\zeta_0^\la$ for $\LL_{W_\la}$, which is defined as follows:
\EQ{ \label{edef}
\eta_0^{\la}(r):=\sinh^{\frac{1}{2}}r \, \partial_\lambda Q_{\YM, \la} (r)=\frac{4\lambda\tanh^2(r/2)\sinh^{1/2}r}{(1+\la^2\tanh^2(r/2))^2}.
} 
Note that $\eta_0^{\la}$ is a positive solution to $\LL_{W_\la}\eta_0=0$ and we have $ \eta^\la_0 \in L^2([0, c))$ for all $c >0$.  

\begin{proof}[Proof of Proposition \ref{no neg spec}] The existence of $\z_0^{\la}$ for $\Lline$ and  $\eta_0^{\la}$ for $\LL_{W_\la}$ exclude the possibility of an eigenvalue with $\mu^2 = 0$. 
To prove the first statement, it thus suffices to rule out eigenvalues in $(-\infty, 0)$.
Suppose  such an eigenvalue exists. Then, as in the proof of Theorems \ref{S2 no eval} and \ref{YM no eval}, there exists $\mu \in \C$ with $\mu^{2} \le 0$ and an $L^{2}$ solution $\phi_{\mu}$ to~\eqref{phi m} that is strictly positive. Proceeding as in the proof of Theorem~\ref{S2 no eval} (resp. Theorem~\ref{YM no eval}) with $\beta_0=\zeta_{0}^\la$, (resp. $\be_0  =\eta_0^\la$) in place of $g(Q)$, for any $R > 0$ we obtain
\begin{equation*}
	\mu^{2} \int_{0}^{R} \beta_0(r) \phi_{\mu}(r) \, \ud r = - \phi'_{\mu}(R) \beta_{0}(R) + \phi_{\mu}(R) {\beta_{0}}'(R).
\end{equation*}
Arguing  as in  the proof Theorems \ref{S2 no eval} or  \ref{YM no eval}, we see that the left-hand side is strictly negative and decreasing in $R$. On the other hand, the right-hand side is non-negative for sufficiently large $R$, which is a contradiction.

The second statement follows from the fact that if $\mu^{2} \geq 1/4$ then there cannot exist any non-zero solution to $\calL_{\VV} \phi = \mu^{2} \phi$ in $L^{2}([1, \infty))$. To prove this statement, note that $\calL_{\VV} - \mu^{2}$ is well-approximated by $-\rd_{rr} - (\mu^{2} - 1/4)$, near $r = \infty$. Moreover, observe  that a fundamental system for $-\rd_{rr} f - (\mu^{2} - 1/4) f = 0$ is $\set{e^{\pm i \sqrt{\mu^{2} - 1/4} \, r}}$ when $\mu^{2} > 1/4$ and $\set{1, r}$ when $\mu^{2} = 1/4$, neither of which decay as $r \to \infty$. \qedhere
\end{proof}

\section{Existence of Gap Eigenvalues} \label{s:eval} 

In this subsection we carry out the proof of Theorem~\ref{e val}, which  in the end, comes down to an  elementary argument based on Sturm oscillation theory. We restrict to the case $k \ge 2$ since the corotational problem, i.e., $k=1$, has already been addressed in~\cite{LOS1}. 

Before beginning the proof, we first give a bit of intuition as to~\emph{why} $H_{V_{\la, k}}$ should have gap eigenvalues, at least in the large $\la$ regime. In short, the existence of 
gap eigenvalues can be attributed to the presence of an eigenvalue for the underlying scale invariant Euclidean problem at the zero energy threshold.  This is best understood in a rescaled setting. We introduce the notation  $$\rho:=  \la r/ 2, $$ and we refer to $\rho$ as the \emph{renormalized} coordinate. Setting $ \ti{Q}_{\la, k}( \rho) := Q_{\la, k}(r)$ we have 
\EQ{
\ti Q_{\la, k}( \rho) = 2 \arctan( \la^k \tanh^k( \rho/ \la))
} 
And note that  the map $\ti Q_{\la, k}$ can be well approximated by $Q_{\euc, k}( \rho) = 2 \arctan( \rho^k)$ on a $\rho$-interval of size  $\e \la$. It is basically  this phenomenon which we would like to exploit at the level of the spectral theory for the operators obtained by linearization about $\ti Q_{\la, k}$ and $Q_{\euc, k}$. Indeed, define  
\EQ{ \label{L def}
\rnL_{\lmb,k}  := -\rd_{\rho \rho} +  (k^2 - \frac{1}{4}) \frac{4}{ \la^2 \sinh^2 (2\rho/\la)} + \frac{1}{ \lmb^{2}} + \frac{4}{ \la^2}V_{\la,k}(2\rho/ \la).
}
We note that  $\rnL_{\lmb,k}$ is related to $\calL_{V_{{\lmb,k}}}$ as follows: Given a function $\phi(r)$ on $(0, \infty)$, define $  \phi_\la(\rho) := \phi(2\rho / \lmb)$. Then
\begin{equation*}
	\left(\rnL_{\lmb,k} \phi_\lambda\right)(\rho)
		= \frac{4}{\lmb^{2}} (\calL_{V_{\lmb,k}} \phi)(2\rho / \lmb)
		= \frac{4}{\lmb^{2}} [\calL_{V_{\lmb,k}} \phi]_\lambda(\rho).
\end{equation*}
This means that $\phi$ solves $\Lline \phi = \mu^2 \phi$ if and only if $\ti \phi(\rho) = \phi(2 \rho/ \la)$ solves $$\Lv \ti \phi =  \frac{4 \mu^2 }{ \la^2} \ti  \phi. $$ In the limit $\lambda\rightarrow\infty,~\rnL_{\lambda,k}$ formally tends to the operator 
\EQ{\label{re euc}
\LL_{\euc} \fy  :=& - \fy_{\rho \rho} + (k^2-\frac{1}{4}) \frac{1}{\rho^2}  \fy + V_{\euc, k}(\rho) \fy, \\
V_{\euc, k}(\rho):=& - k^2\frac{\rho^{2k-2}}{(1+ \rho^{2k})^2}.
}
The equation $\LL_{\euc} \varphi = 0$ possesses an explicit solution
\begin{equation} \label{eq:eucRes}
	\varphi_{\euc}(\rho) := \frac{\rho^{k+\frac{1}{2}}}{1 + \rho^{2k}} \in L^2([0, \infty)) \mfor k \ge 2.
\end{equation}

The Schr\"odinger operator $\LL_{\euc}$ arises by linearizing the $k$-equivariant Euclidean wave maps equation~\eqref{euc wm} around the ground state harmonic map $Q_{\euc, k}$. The explicit solution $\varphi_{\euc}$ is obtained from the scaling invariance of the problem and is an \emph{eigenfunction} as long as $k  \ge 2$. We note that $ \fy_{\euc}$ is a zero energy resonance in the case $k=1$;  
see \cite{KST} for more details.

In the $1$-equivariant case considered in~\cite{LOS1},  the formal limit $\Lv \to \LL_{\euc}$ was exploited by way of the following~\emph{renormalization}. Given a solution  $\phi_0$ to $$\Lv \phi_0 =  \frac{1}{\la^2} \phi_0,  \quad \phi_0 \in L^2([0, c)),  \, \, c>0$$ define its renormalization $\ti f( \rho)$ by 
\ant{
\ti f( \rho) := \frac{ \phi_0( \rho)}{ \fy_{\euc}( \rho)}
}
Then $\ti f $ solves 
\ant{
\left( \ti f' \fy_{\euc}^2 \right)'  =  \fy_{\euc}^2  \left( \Lv - \frac{1}{\la^2} - \LL_{\euc}\right)\ti f
}
To prove that $\Lv$ has an eigenvalue $ \mu^2 < \frac{1}{ \la^2}$  is suffices, by standard Sturm oscillation theory, to show that any solution $\phi_0 \in L^2([0, c))$ as above must change signs at least once.  After renormalization above, it then suffices that $\ti f$ must change signs. The fact that   $\Lv - \frac{1}{\la^2} - \LL_{\euc} \to 0 \mas \la \to \infty$ for each $\rho >0$ suggests that one can obtain good control over  $\ti f$ on an interval of size, $ 0 \le \rho \lesssim 1$. Also, the conclusions of Lemma~\ref{LL ev 1} in the $k =1$ case, give good control of $ \phi_0$ near $\rho = \infty$. The difficulty arises when trying to make these two regions, where $\ti f$ is controlled,  overlap. In~\cite{LOS1} this is achieved by extending the control given by the renormalization near $ \rho =0$ to an interval of size $ 0 \le \rho \lesssim \la$ which can be made arbitrarily large. The key is a certain a priori estimate for $  \ti f$ which holds under the contradiction hypothesis that $\ti f$ remains positive on the interval $[0, \la]$, see \cite[Lemma~$3.9$]{LOS1}. This  approach requires precise control over $\Lv - \frac{1}{\la^2} - \LL_{\euc}$ which is a complicated singular expression. 

We follow a somewhat different approach in this paper to deal with the higher equivariance classes $k \ge 2$. In fact, one may expect that it is easier to detect gap eigenvalues for the higher equivariance classes since the solution $\phi_{\euc}$ is an \emph{eigenvalue} for the $\LL_{\euc}$ when  $k  \ge 2$, rather than a \emph{resonance} when $k =1$. The presence of eigenvalues for an operator $-\p_{rr}  + V$ is a stable phenomenon under small changes to the potential $V$, while the presence of a resonance is extremely unstable. Of course the difference between $\LL_{\euc}$ and $\Lv - \frac{1}{\la^2}$ is not given by a small perturbation in any reasonable sense, and the previous statement is meant only as a rough heuristic. On the other hand, in the argument presented below the existence of gap eigenvalues is deduced by way of a polynomial divergence in $\la$ with power  $\la^{2k-2}$, rather than the delicate $\log(\la)$ divergence detected in the $1$-equivariant case in \cite{LOS1}; compare~\cite[Proof of Lemm~$3.9$]{LOS1} with the proof of Theorem~\ref{e val} below.

Rather than  confront the difference  $ \Lv - \frac{1}{\la^2} - \LL_{\euc}$ directly as in~\cite{LOS1} by renormalizing with respect to $\fy_{\euc}$, here we renormalize relative to $\z_0^\la$, which is defined in~\eqref{zdef} and is the unique $L^2_{\loc}$ solution to $\Lline \zeta = 0$. Recall that $\z_0^\la$  is obtained by differentiating $Q_{\la, k}( \rho)$ with respect to~$\la$. 
Since we will work exclusively in the renormalized coordinate $\rho =  \la r/ 2$ from now on, we will slightly abuse notation by writing  $\zeta_0^\la( \rho) =  \zeta_0^\la(  2 \rho/ \la)$. Then $\z_0^\la( \rho)$ is given by 
\EQ{ \label{zdef1} 
\zeta_0^\la( \rho):=\frac{2k\lambda^{k-1}\tanh^{k}( \rho/ \la)\sinh^{1/2}(2  \rho/ \la)}{1+\lambda^{2k}\tanh^{2k}( \rho/ \la)}
 }
 and solves 
 \EQ{ \label{Lz}
   \Lv \z_0^\la( \rho) = 0, \quad  \z_0^\la \in L^2([0, c]), \, \, c >0
   }
We note that $\zeta_0^\la$ has the same behavior near $ \rho =0$ as $\fy_{\euc}$. However, $\z_0^\la$ is not an eigenvalue for $\Lv$,  in fact $\z_0^\la(  \rho)$ grows like $e^{\rho/2 \la}$. This last point will not matter in the argument though. In fact $\zeta_0^\la$ is well-approximated by $\fy_{\euc}$ for $\rho \leq \eps \la$, where $\eps > 0$ is sufficiently small. In view of the decay of $\fy_{\euc}$ as $\rho\to\infty$ this means that $\zeta_0^\la$ decays at $\rho = \eps \la$ for large $\la$. Even though $\zeta_0^\la$ does not change sign, this decay is enough to show that $\phi_0,$ which satisfies an equation with a more attractive potential compared to $\zeta_0^\la,$ must change sign.

Given any solution $\phi$ to $\Lv \phi =  \frac{4\mu^2}{\la^2} \phi$ we define its renormalization with respect to $\z_0^\la$ (which we will now denote simply by $\z := \z_0^{\la}$) by 
\EQ{ \label{ren}
 f_{\mu}(\rho) =  \frac{\phi( \rho)}{\z( \rho)}
 }
 Note that by Lemma~\ref{LL ev 0} we can multiply  $\phi$ by a constant  to ensure that we have $(f_\mu(0), f'_{\mu}(0)) = (1, 0)$. 
 Then $f(\rho)$ solves 
 \EQ{\label{feq}
 ( f_\mu' \z^2)' ( \rho)= -\frac{ 4\mu^2}{\la^2} \,  \z^2( \rho)  f_\mu( \rho), \quad f_\mu(0) = 1,  \,\, f'_{\mu}(0)= 0
 }
 
 We are now ready to prove Theorem~\ref{e val}. 
 
 \begin{proof}[Proof of  Theorem~\ref{e val}$(i)$]
 We begin by establishing the existence of gap eigenvalues in the case that $k \ge 2$ is fixed and $\la$ is large enough.
 
 Let $\phi$ be a solution to 
 \EQ{ \label{phidef}
 \Lv \phi =   \frac{1}{\la^2}  \phi , \quad  \phi \in L^2([0, c]), \, \,  \forall c>0
 }
 and we multiply by a suitable constant so that its renormalization $f_{\frac{1}{2}}(\rho) = f ( \rho) =  \phi( \rho) / \z( \rho)$ as in~\eqref{ren} satisfies $f(0) = 1$. By~\eqref{feq} we see that for any $\rho_1 \ge 0$  
  \EQ{ \label{form}
  &f( \rho) = f( \rho_1) + f'( \rho_1) \z^2( \rho_1) \int_{\rho_1}^{\rho} \z^{-2}( \tau) \, d \tau - \frac{1}{\la^2} \int_{\rho_1}^{\rho}  \int_{\rho_1}^{\tau} \frac{  \z^2( \s)}{ \z^{2}( \tau)}  f( \s) \, d \s \, d \tau \\
  &f'( \rho)= f'( \rho_1) \z^2( \rho_1) \z^{-2}( \rho) -  \frac{1}{\la^2}\int_{\rho_1}^{\rho}  \frac{  \z^2( \s)}{ \z^{2}( \rho)}   f( \s) \, d \s 
  }
  By Sturm oscillation theory, it suffices to show that any such solution $\phi$ must change signs. We assume for contradiction that $\phi$ as in~\eqref{phidef} satisfies $\phi( \rho) \ge 0$ for all $ \rho  \ge 0$.  Since $\z( \rho) \ge 0$ for all $ \rho \ge 0$, this means that we are assuming for the sake of contradiction that 
  \ant{
  f( \rho) \ge 0 , \quad \forall \, \,  \rho \ge 0
  }
  By~\eqref{form} it follows that $f'( \rho)<0$ for all $ \rho > 0$,  which means that $f$ is strictly decreasing. Under these hypotheses, we prove the following claim. 
\begin{claim} \label{c:12}
  Set $ \rho_0 := \rho_0( \la)  :=  \la \arctanh( 1/ \la)$. If $k \ge 2$ is fixed, then there exists $\La_1 =  \La_1(k)$ large enough so that for $\la\geq\Lambda_1$
  \EQ{ \label{f12}
  f( \rho) \ge  \frac{1}{2}, \quad  \mfor  \rho \in [0, \rho_0( \la)].
  }
  \end{claim}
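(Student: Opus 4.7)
\smallskip

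\emph{Plan.} The argument will be a quantitative estimate of $f$ directly from the integral form \eqref{form}. I would set $\rho_{1}=0$ in \eqref{form}. Since $\zeta_{0}^{\lambda}(\rho)\sim 2\sqrt{2}\,k\,\lambda^{-3/2}\rho^{k+1/2}$ as $\rho\to 0$ (the same leading order as $\varphi_{\euc}$ in \eqref{eq:eucRes}), we have $\zeta(0)=0$, so the boundary term $f'(\rho_{1})\zeta^{2}(\rho_{1})\int_{\rho_{1}}^{\rho}\zeta^{-2}$ drops out, and the normalization $f(0)=1$ produces the identity
\begin{equation*}
	f(\rho) \;=\; 1 \;-\; \frac{1}{\lambda^{2}}\int_{0}^{\rho}\!\int_{0}^{\tau}\frac{\zeta^{2}(\sigma)}{\zeta^{2}(\tau)}\,f(\sigma)\,d\sigma\,d\tau .
\end{equation*}
Under the standing contradiction hypothesis $f \ge 0$, the second line of \eqref{form} already gives $f'(\rho)\le 0$ on $[0,\infty)$, so $0\le f(\sigma)\le f(0)=1$. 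In particular, a lower bound on $f(\rho)$ is obtained by replacing $f(\sigma)$ by $1$, and Claim~\ref{c:12} is reduced to showing that
\begin{equation*}
	I(\rho) \;:=\; \int_{0}^{\rho}\!\int_{0}^{\tau}\frac{\zeta^{2}(\sigma)}{\zeta^{2}(\tau)}\,d\sigma\,d\tau \;\le\; C_{k}
\end{equation*}
holds \emph{uniformly in $\lambda$} for $\rho\in[0,\rho_{0}(\lambda)]$.

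The interval $[0,\rho_{0}(\lambda)]$ with $\rho_{0}(\lambda)=\lambda\arctanh(1/\lambda)$ is tuned precisely so that $\lambda^{k}\tanh^{k}(\rho/\lambda)\le 1$ throughout, and in particular $\rho_{0}(\lambda)=1+O(\lambda^{-2})$. On this interval, $\rho/\lambda\le\arctanh(1/\lambda)=O(\lambda^{-1})$, so a direct Taylor expansion of \eqref{zdef1} in the small parameter $\rho/\lambda$ yields the \emph{uniform} asymptotic
\begin{equation*}
	\zeta_{0}^{\lambda}(\rho) \;=\; 2\sqrt{2}\,k\,\lambda^{-3/2}\,\varphi_{\euc}(\rho)\bigl(1+O(\lambda^{-2})\bigr), \qquad \rho\in[0,\rho_{0}(\lambda)],
\end{equation*}
with $\varphi_{\euc}(\rho)=\rho^{k+1/2}/(1+\rho^{2k})$. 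The $\lambda^{-3/2}$ prefactors then cancel in the ratio $\zeta^{2}(\sigma)/\zeta^{2}(\tau)$, so that $I(\rho)$ equals, up to a $(1+O(\lambda^{-2}))$ factor, the purely Euclidean double integral of $\varphi_{\euc}^{2}(\sigma)/\varphi_{\euc}^{2}(\tau)$ over the same range. On $\rho\in[0,1+O(\lambda^{-2})]$ this is routine: $\varphi_{\euc}^{2}(\sigma)\le \sigma^{2k+1}$ and $\varphi_{\euc}^{-2}(\tau)\le 4\tau^{-2k-1}$ give $\int_{0}^{\tau}\varphi_{\euc}^{2}(\sigma)/\varphi_{\euc}^{2}(\tau)\,d\sigma\le \tfrac{2\tau}{k+1}$, and hence $I(\rho)\lesssim_{k}1$. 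Feeding this back into the identity above yields $f(\rho)\ge 1-C_{k}/\lambda^{2}\ge\tfrac{1}{2}$ for all $\lambda\ge\Lambda_{1}(k)$ with $\Lambda_{1}(k)$ chosen sufficiently large.

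The only even mildly delicate point is the choice of $\rho_{0}(\lambda)$: it marks exactly the scale up to which $\zeta_{0}^{\lambda}$ still behaves like a rescaled copy of the Euclidean eigenfunction $\varphi_{\euc}$. Beyond this scale $\zeta_{0}^{\lambda}$ transitions into its exponentially growing regime $\sim e^{\rho/(2\lambda)}$, and the $\lambda$-independent bound on the double integral would fail. Conceptually, this is why the existence of the $L^{2}$ eigenfunction $\varphi_{\euc}$ at the Euclidean threshold (available because $k\ge 2$, cf.~\eqref{eq:eucRes}) is what drives the whole argument: it is precisely the strong Euclidean decay of $\varphi_{\euc}$, inherited by $\zeta_{0}^{\lambda}$ on $[0,\rho_{0}(\lambda)]$, that keeps $I(\rho)$ bounded uniformly in $\lambda$ and thus lets the prefactor $\lambda^{-2}$ close the estimate.
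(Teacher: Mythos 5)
Your reduction is exactly the paper's: set $\rho_1 = 0$ in \eqref{form}, use $f'\le 0$ and $0 \le f \le 1$ from the contradiction hypothesis, and reduce the claim to a $\lambda$-uniform $O(1)$ bound on the double integral $I(\rho) = \int_0^\rho \zeta^{-2}(\tau)\int_0^\tau \zeta^2(\sigma)\,d\sigma\,d\tau$ for $\rho\in[0,\rho_0(\lambda)]$. Where you genuinely diverge is in how you estimate $I(\rho)$. The paper argues \emph{directly} from the explicit formula \eqref{zdef1}: it integrates $\tanh^{2k}(\sigma/\lambda)\cosh^{-2}(\sigma/\lambda)$ exactly, exploits that the choice $\rho_0 = \lambda\arctanh(1/\lambda)$ makes $\tanh(\rho_0/\lambda)=1/\lambda$ and $\lambda^{2k}\tanh^{2k}(\rho_0/\lambda)=1$ exactly, and lands on $\tfrac{1}{\lambda^2}I(\rho)\le \lambda^{-1}\tfrac{\lambda^2}{\lambda^2-1}\tfrac{4}{2k+1}\arctanh(\lambda^{-1})$. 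You instead establish the uniform-in-$\rho$ approximation $\zeta_0^\lambda(\rho)=2\sqrt2\,k\,\lambda^{-3/2}\varphi_{\euc}(\rho)\bigl(1+O(\lambda^{-2})\bigr)$ on $[0,\rho_0]$ (valid because $\rho/\lambda\le\arctanh(1/\lambda)=O(\lambda^{-1})$ there), cancel the prefactors in the ratio $\zeta^2(\sigma)/\zeta^2(\tau)$, and bound the purely Euclidean integral $\int_0^\tau\varphi_{\euc}^2(\sigma)/\varphi_{\euc}^2(\tau)\,d\sigma\le\tfrac{2\tau}{k+1}$ to get $I(\rho)\lesssim_k 1$. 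Your route is conceptually cleaner and foregrounds exactly why $\rho_0$ is tuned the way it is — it is the scale up to which $\zeta_0^\lambda$ tracks the Euclidean $L^2$ eigenfunction $\varphi_{\euc}$ (which exists precisely because $k\ge2$) — and in fact makes rigorous the heuristic the paper offers just before the proof of Theorem~\ref{e val}. The tradeoff is that you must actually verify the stated $\bigl(1+O(\lambda^{-2})\bigr)$ uniformity, which you assert but do not carry out, whereas the paper's explicit antiderivative manipulation sidesteps any asymptotic-expansion bookkeeping. Both deliver the same $\lambda^{-2}$ decay (yours with constant $\sim(k+1)^{-1}$, the paper's $\sim 4(2k+1)^{-1}$), and both close for $\lambda\ge\Lambda_1(k)$.
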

  \begin{proof}[Proof of Claim~\ref{c:12}]
  Setting $\rho_1 = 0$ in~\eqref{form} gives, 
  \EQ{ \label{f0}
  f( \rho)  = 1 - \frac{1}{\la^2} \int_{0}^{\rho} \z^{-2}( \tau) \int_{0}^{\tau}   \z^2( \s)   f( \s) \, d \s \, d \tau
  }
  Since $f( \rho)$ is decreasing, we have $f( \rho) \le 1$ and thus for all $ \tau \le \rho_0( \la)$, and using the explicit formula~\eqref{zdef1},
  \ant{
   \int_{0}^{\tau}   \z^2( \s)   f( \s) \, d \s  &\le \int_{0}^{\tau}   \z^2( \s)   \, d \s   =  \int_0^{\tau}\frac{4k^2\lambda^{2k-2}\tanh^{2k}( \s/ \la)\sinh( 2\s/ \la)}{(1+\lambda^{2k}\tanh^{2k}( \s/ \la))^2} \, d \s \\
   & \le 4k^2\lambda^{2k-2}\sinh( 2\tau/ \la) \int_0^{\tau}\tanh^{2k}( \s/ \la) \, d  \s \\
   & \le 4k^2\lambda^{2k-2}\sinh( 2\tau/ \la) \cosh^2(  \rho_0( \la)/ \la) \int_0^{\tau}\frac{\tanh^{2k}( \s/ \la)}{\cosh^{2}(  \s/ \la)} \, d \s
   }
   Using the definition of $\rho_0( \la)$ we have  $ \cosh^2(  \rho_0( \la)/ \la) =  \frac{\la^2}{ \la^2 -1}$. Since $ \frac{d}{d x}  \tanh^m x  =  m \tanh^{m-1}x  \cosh^{-2}{x}$ the last line above is equal to 
   \ant{
     &=  \frac{4k^2}{2k+1} \la^{2k-1}\frac{\la^2}{ \la^2 -1} \sinh( 2 \tau/ \la)  \int_0^{\tau} \frac{d}{d \s} ( \tanh^{2k+1}( \s/ \la) )\, d \s  \\
     & =  \frac{4k^2}{2k+1} \la^{2k-1}\frac{\la^2}{ \la^2 -1} \sinh( 2 \tau/ \la)  \tanh^{2k+1}( \tau/ \la)
     }
     Therefore, using the above as well as the definition of $\z$ in~\eqref{zdef1}, we have for all $ \rho \le \rho_0$
 \EQ{ \label{int12}
  \frac{1}{\la^2}  & \int_{0}^{\rho} \z^{-2}( \tau) \int_{0}^{\tau}   \z^2( \s)   f( \s) \, d \s \, d \tau  \\
   & \le   \frac{4k^2}{2k+1} \la^{2k-3}\frac{\la^2}{ \la^2 -1} \int_{0}^{\rho} \frac{\sinh( 2 \tau/ \la)  \tanh^{2k+1}( \tau/ \la)}{ \z^2( \tau)} \, d \tau \\
   & =   \la^{-1} \frac{\la^2}{ \la^2 -1} \frac{1}{2k+1} \int_0^{\rho} \tanh( \tau/ \la)( 1 +  \la^{2k} \tanh^{2k}(  \tau/ \la))^2 \, d \tau \\
   & \le \la^{-1} \frac{\la^2}{ \la^2 -1} \frac{1}{2k+1}\tanh( \rho_0/ \la)( 1 +  \la^{2k} \tanh^{2k}(  \rho_0/ \la))^2   \rho_0 \\
   &  = \la^{-1} \frac{\la^2}{ \la^2 -1} \frac{4}{2k+1} \arctanh( \la^{-1})  \le  \frac{1}{2} 
   }
   where the final inequality in the last line above holds 
    if $k \ge 2$ is fixed by taking $\la$ large enough. Inserting~\eqref{int12} into~\eqref{f0} yields the claim. 
  \end{proof}
  The lower bound on $f ( \rho)$ in Claim~\ref{c:12} allows us to find a strictly negative upper bound for $f'( \rho_0( \la))$. In particular we prove the following claim. 
  \begin{claim} \label{c:f'}
 Let $\rho_0 = \la \arctanh(1/ \la)$.  If $f( \rho) \ge \frac{1}{2}$ for all $ \rho \in [0, \rho_0]$, then 
 \EQ{ \label{f'0}
  \abs{f'( \rho_0)} \ge  \frac{ 1}{ \z^2( \rho_0)} \frac{k^2}{2k+2}  \la^{-5}
  }
  \end{claim}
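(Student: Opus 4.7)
The plan is to extract $\abs{f'(\rho_0)}$ directly from the second identity in~\eqref{form} with $\rho_1 = 0$ and the initial data $f(0)=1,~f'(0)=0$. This yields
\[
\abs{f'(\rho_0)} = \frac{1}{\la^2\, \z^2(\rho_0)} \int_0^{\rho_0} \z^2(\sigma) f(\sigma)\, d\sigma,
\]
and the hypothesis $f \geq 1/2$ on $[0, \rho_0]$ immediately reduces the claim to establishing the lower bound
\[
\int_0^{\rho_0} \z^2(\sigma)\, d\sigma \geq \frac{2k^2}{2k+2}\, \la^{-3}.
\]

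To obtain this bound, I would exploit the defining property of $\rho_0 = \la\arctanh(1/\la)$, namely that $\tanh(\sigma/\la) \leq 1/\la$ throughout $[0, \rho_0]$. This gives $\la^{2k}\tanh^{2k}(\sigma/\la) \leq 1$, so the denominator $(1 + \la^{2k}\tanh^{2k}(\sigma/\la))^2$ in the explicit formula~\eqref{zdef1} for $\z$ is bounded above by $4$. Consequently, on $[0, \rho_0]$,
\[
\z^2(\sigma) \geq k^2\, \la^{2k-2}\, \tanh^{2k}(\sigma/\la)\, \sinh(2\sigma/\la).
\]

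Finally, I would evaluate the resulting integral via the substitution $t = \tanh(\sigma/\la)$, under which $d\sigma = \la(1-t^2)^{-1}dt$ and $\sinh(2\sigma/\la) = 2t(1-t^2)^{-1}$. Since $\tanh(\rho_0/\la) = 1/\la$, this gives
\[
\int_0^{\rho_0} \tanh^{2k}(\sigma/\la)\,\sinh(2\sigma/\la)\, d\sigma = 2\la\int_0^{1/\la} \frac{t^{2k+1}}{(1-t^2)^2}\, dt \geq 2\la\int_0^{1/\la} t^{2k+1}\, dt = \frac{2}{(2k+2)\,\la^{2k+1}},
\]
where the inequality uses the trivial lower bound $(1-t^2)^{-2} \geq 1$. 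Assembling the three displayed estimates yields exactly~\eqref{f'0}. I do not anticipate any substantive obstacle here: the factor $\z^{-2}(\rho_0)$ appears for free in the formula for $f'(\rho_0)$ and need not be separately estimated, and the definition of $\rho_0$ is tailored precisely so that the denominator of $\z^2$ remains uniformly bounded on the integration range, permitting an entirely explicit one-variable integration to produce the correct power of $\la$.
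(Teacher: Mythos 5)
Your proposal is correct and follows essentially the same approach as the paper: read off $\abs{f'(\rho_0)}$ from~\eqref{form} with $\rho_1=0$, use $f\geq 1/2$, bound the denominator of $\z^2$ by $4$ on $[0,\rho_0]$ via $\lmb^{2k}\tanh^{2k}(\sigma/\lmb)\leq 1$, and then lower-bound the remaining integral by discarding a factor of $\cosh^4(\sigma/\lmb)\geq 1$. The only cosmetic difference is that you evaluate the integral via the substitution $t=\tanh(\sigma/\lmb)$ before dropping $(1-t^2)^{-2}\geq 1$, whereas the paper rewrites $\sinh(2\sigma/\lmb)=2\tanh(\sigma/\lmb)\cosh^2(\sigma/\lmb)$, replaces $\cosh^2$ by $\cosh^{-2}$, and recognizes a total derivative of $\tanh^{2k+2}(\sigma/\lmb)$; both moves discard the identical factor and yield the same bound $\frac{2k^2}{2k+2}\lmb^{-3}$.
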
  
  
  \begin{proof}[Proof of Claim~\ref{c:f'}] 
  Setting $ \rho_1 = 0$  in~\eqref{form} and using the hypothesis $f( \tau) \ge \frac{1}{2}$  for $ \tau \in [0, \rho_0]$ gives, 
  \EQ{ \label{f'est}
   \abs{f'( \rho_0)}  &=   \abs{\frac{1}{\la^2}\int_{0}^{\rho_0} \frac{ \z^2( \s)}{ \z^2( \rho_0)}  f( \s) \, d \s }  \ge \frac{1}{2 \la^2} \frac{1}{\z^2( \rho_0)}  \int_0^{\rho_0} \z^2( \s) \, d \s
   }
  Using the formula~\eqref{zdef1} and the fact that for $\s \le \rho_0$ we have $1+ \la^{2k} \tanh^{2k}( \s/ \la) \le 1+ \la^{2k} \tanh^{2k}( \rho_0/ \la)  = 2$, we obtain
  \ant{
  \int_0^{\rho_0}& \z^2( \s) \, d \s =  \int_0^{\rho_0}\frac{4k^2\lambda^{2k-2}\tanh^{2k}( \s/ \la)\sinh( 2\s/ \la)}{(1+\lambda^{2k}\tanh^{2k}( \s/ \la))^2} \, d \s \\
  & \ge k^2 \lambda^{2k-2} \int_0^{\rho_0}\tanh^{2k}( \s/ \la)\sinh( 2\s/ \la) \, d \s   \\
  & = 2k^2 \lambda^{2k-2} \int_0^{\rho_0}\tanh^{2k+1}( \s/ \la) \cosh^2( \s/ \la) \, d \s \\
  & \ge  \frac{2k^2}{2k+2} \la^{2k-1} \int_0^{\rho_0}   \frac{d}{d \s} \tanh^{2k+2}( \s/ \la) \, d\s  =  \frac{2k^2}{2k+2} \la^{2k-1} \tanh^{2k+2}( \rho_0/ \la) \\
  &=  \frac{2k^2}{2k+2} \la^{-3}
  }
  Plugging the above into~\eqref{f'est} yields~\eqref{f'0}. 
  \end{proof} 
  Now, using Claim~\ref{c:12}, Claim~\ref{c:f'}, and setting $\rho_1 = \rho_0 = \la \arctanh(1/ \la)$ in~\eqref{form} we have, for any $ \rho \ge \rho_0$ 
  \EQ{ \label{frho}
  f( \rho)  &= f( \rho_0) + f'( \rho_0) \z^2( \rho_0) \int_{\rho_0}^{\rho} \z^{-2}( \tau) \, d \tau - \frac{1}{\la^2} \int_{\rho_0}^{\rho}  \int_{\rho_0}^{\tau} \frac{  \z^2( \s)}{ \z^{2}( \tau)}  f( \s) \, d \s \, d \tau \\
 &  \leq 1 + f'( \rho_0) \z^2( \rho_0) \int_{\rho_0}^{\rho} \z^{-2}( \tau) \, d \tau  \leq  1-   \frac{k^2}{2k+2}  \la^{-5} \int_{\rho_0}^{\rho} \z^{-2}( \tau) \, d \tau 
}
 Next, using again the explicit formula~\eqref{zdef1} 
 \ant{
 \int_{\rho_0}^{\rho}& \z^{-2}( \tau) \, d \tau =   \int_{\rho_0}^{\rho} \frac{(1+ \la^{2k} \tanh^{2k}(  \tau/ \la))^2}{ 4k^2 \la^{2k-2} \tanh^{2k}( \tau/ \la) \sinh(2 \tau/ \la)} \, d \tau \\
 & \ge \frac{\la^{2k+2}}{4k^2}  \int_{\rho_0}^{\rho}  \frac{ \tanh^{2k}( \tau/ \la)}{ \sinh (2 \tau/ \la) } \, d \tau 
  \,  \, = \,  \frac{\la^{2k+2}}{8k^2}  \int_{\rho_0}^{\rho}  \tanh^{2k-1}( \tau/ \la) \frac{1}{ \cosh^2( \tau/ \la)} \, d \tau \\
 & =   \frac{\la^{2k+3}}{16k^3}  \int_{\rho_0}^{\rho}   \frac{d}{d \tau} \tanh^{2k}( \tau / \la) \, d \tau  \, \,   =  \, \frac{\la^{2k+3}}{16k^3} \left( \tanh^{2k}( \rho/ \la) - \la^{-2k} \right)
  }
  Inserting the above into~\eqref{frho} gives 
  \ant{
  f( \rho) \le 1 - \frac{\la^{2k-2}}{32k(k+1)} \left( \tanh^{2k}( \rho/ \la) - \la^{-2k} \right)
  }
  Letting $ \rho \to \infty$ above then yields 
  \EQ{ \label{fneg}
   \lim_{\rho \to \infty} f( \rho) \le 1 - \frac{\la^{2k-2}}{32k(k+1)} \left( 1 - \la^{-2k} \right)
  }
  
  If $k \ge 2$ is fixed, we can find $\La_0(k) \ge \La_1(k)$ large enough so that for all $\la \ge \La_0$, the right-hand side of~\eqref{fneg} is negative, yielding a contradiction -- here $\La_1(k)$ is as in Claim~\ref{c:12}. This proves the existence of gap eigenvalues for $\la \ge \La_0$.  
  
 
 \vspace{\baselineskip}
 Next we prove that for fixed $k \ge 2$ and $\lambda$ large enough,  the eigenvalues we have found are simple and unique. We show that any eigenfunction $\psi_\mu\in L^2$ solving $\LL_{V_{\lambda,k}}\psi_\mu=\mu^2\psi_\mu,$ for $\mu^2 \in (0, \frac{1}{4})$ cannot change signs as long as $\la$ is large enough. 
 
 First we show that  for an appropriately chosen constant $C,$ and $\lambda$ large, $\psi_\mu$ does not change sign in the interval $[\frac{C}{\lambda},\infty).$ Without loss of generality, we may assume that $\psi_\mu(r)$ is positive for large $r.$ Define $m > 0$ by $m^{2} = \frac{1}{4} - \mu^{2}$. We  compare $\psi_\mu$ with $h(r):=e^{-m r}$, which up to scaling is the unique nonzero $L^2$ solution of $\partial_{rr} h=m^2 h$.  After suitable renormalization, it is clear from~\eqref{LLdef} and the exponential decay of $V_{\la, k}(r)$ as $ r \to \infty$, that  we may assume that $\psi_\mu (r)=  e^{-m r} + o(e^{-m r})$ as $r\rightarrow\infty.$ Defining
\[\WW(r) : = W[\psi_\mu,h](r)=\psi_\mu(r) h^\prime(r)-\psi_\mu^\prime(r) h(r),\]
we have
\begin{equation*}
	\WW'(r)
	= - \left(\frac{k^2\cos(2\Qu)-\frac{1}{4}}{ \sinh^{2} r}  \right) \psi_\mu(r) h(r).
\end{equation*}
It follows from the definition of $Q_{\la, k}$, which is explicit,  that if $\lambda$ is large enough, there exists a constant $C>0$ so that for $r\geq C/\lambda$
\ali{\label{new potential bound}
\frac{k^2\cos(2\Qu)-\frac{1}{4}}{ \sinh^{2} r}>0,
}
and therefore $\WW^\prime(r) \leq 0,$ so long as $\psi_\mu$ is positive (note that $h > 0$ everywhere). Assuming~\eqref{new potential bound} for the moment, let $R$ denote the largest zero of $\psi_\mu$ and for contradiction assume $R\geq C/\lambda$. Then $\WW^\prime(r)<0$ and $\psi_\mu \sim e^{-m r}$ as $r\rightarrow\infty$ imply that $\WW(R) \geq 0$. This means that
\[\lim_{r\rightarrow R^{+}} \frac{h^\prime(r)}{h(r)} \geq\lim_{r\rightarrow R^{+}}\frac{\psi_\mu^\prime(r)}{\psi_\mu(r)}=\infty,\]
and therefore we must have $h(R)=0$, which is impossible. In the case of a threshold resonance, simply run the the same argument as above comparing with $h   \equiv 1$, which up to scaling is the unique nonzero bounded solution to $h_{rr} = 0$ -- we omit the details here since the argument is very similar. 


 It remains to prove that $\psi_\mu$ cannot change signs on the interval $[0, C/ \la]$ for large enough $\lambda$ (again the same argument works here in the case of a threshold resonance).   Defining the renormalization $f_\mu$ as in~\eqref{ren} by
\ant{
&f_\mu(\rho)=\frac{\psi_\mu(2\rho/\lambda)}{\zeta(\rho)}, \quad f_{\mu}(0) = 1, \, \, f'_\mu(0) = 0 \\
&f_{\mu}( \rho) = 1 - \frac{4 \mu^2}{\la^2} \int_{0}^{\rho}  \int_{0}^{\tau} \frac{  \z^2( \s)}{ \z^{2}( \tau)}  f( \s) \, d \s \, d \tau
}
we note that it suffices to show that $f_{\mu}$ cannot change signs in the interval $[0, \frac{C}{2}]$. Using the explicit formula for $\z$ we see that 
\ant{
\abs{ \frac{4 \mu^2}{\la^2} \int_{0}^{C/2}  \int_{0}^{\tau} \frac{  \z^2( \s)}{ \z^{2}( \tau)}   \, d \s \, d \tau}     = O( \la^{-2}) \to 0 \mas \la \to \infty
}
and thus it follows for a Volterra-type iteration argument (see for example~\cite[Proof of  Claim $3.12$]{LOS1} that 
\ant{
\sup_{ \rho \in [0, C/2]} \abs{ f_{\mu}( \rho) - 1} = o(1) \mas \la \to \infty
}
we conclude that $f_\mu$ is positive in $[0, \frac{C}{2}]$ as long as $\la$ is large enough,  as desired. 
  
  \vspace{\baselineskip}
  
  Finally, to complete the proof of Theorem~\ref{e val}$(i)$ by showing that the simple gap eigenvalue $\mu_{\la}^2$ migrates to $0$ as $\la \to \infty$, i.e., we prove~\eqref{mu to 0}. By Sturm oscillation and the definition of $\Lv$,  it suffices to prove the following claim. 
  
  \begin{claim} \label{c:mig} Let $\ba \mu^2 \in (0, 1/4]$. Then, for $\la$ large enough (depending on $\ba \mu^2$), the solution $\phi_0$ to the ODE
  \ant{
   \Lv \phi_0 = \frac{4\ba \mu^2}{ \la^2} \phi_0,  \quad  \phi_0( \rho)/ \z (\rho) \to 1 \mas \rho \to 0
   }
   must change sign. 
  \end{claim}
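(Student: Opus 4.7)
\textbf{Proof plan for Claim~\ref{c:mig}.}
The plan is to repeat essentially the three-step argument already used to establish the existence of a gap eigenvalue at $\mu^{2} = 1/4$ for $\la \ge \Lmb_0(k)$, but track explicitly the dependence on $\ba\mu$ in each estimate. Renormalize the putative solution by setting $f_{\ba\mu}(\rho) := \phi_0(\rho)/\zt(\rho)$ so that $f_{\ba\mu}(0) = 1$ and $f'_{\ba\mu}(0) = 0$, and observe that $f_{\ba\mu}$ satisfies the weighted ODE
\EQ{
(f'_{\ba\mu}\,\zt^{2})'(\rho) = -\tfrac{4\ba\mu^{2}}{\la^{2}}\,\zt^{2}(\rho)\,f_{\ba\mu}(\rho),
}
so that by Sturm oscillation it suffices to show that $f_{\ba\mu}$ must vanish at some $\rho > 0$. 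Argue by contradiction: suppose $f_{\ba\mu}(\rho) \ge 0$ for all $\rho \ge 0$; integrating once shows $f'_{\ba\mu} < 0$ on $(0,\infty)$, so $f_{\ba\mu}$ is strictly decreasing and bounded above by $1$.

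Next, I would rerun Claim~\ref{c:12} with the new coefficient $\frac{4\ba\mu^{2}}{\la^{2}}$ in place of $\frac{1}{\la^{2}}$ and the same cutoff $\rho_0 := \la\,\arctanh(1/\la)$. Since $\ba\mu^{2} \le 1/4$, the factor $4\ba\mu^{2} \le 1$ is harmless and the identical pointwise bound
\EQ{
f_{\ba\mu}(\rho) \ge \tfrac{1}{2}, \qquad \rho \in [0,\rho_0],
}
survives for all $\la$ larger than the same threshold $\Lambda_1(k)$. The analogue of Claim~\ref{c:f'} then yields the improved lower bound
\EQ{
\abs{f'_{\ba\mu}(\rho_0)} \ge \frac{4\ba\mu^{2}\, k^{2}}{(2k+2)\,\zt^{2}(\rho_0)}\,\la^{-5},
}
where now the factor $4\ba\mu^{2}$ helps rather than hurts. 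Propagating via the variation-of-constants formula from $\rho_1 = \rho_0$ exactly as in the estimate \eqref{frho}--\eqref{fneg}, I obtain
\EQ{
\lim_{\rho\to\infty}f_{\ba\mu}(\rho) \;\le\; 1 \;-\; \frac{\ba\mu^{2}\,\la^{2k-2}}{8k(k+1)}\bigl(1-\la^{-2k}\bigr).
}

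Since $k \ge 2$ the exponent $2k-2$ is strictly positive, so for each fixed $\ba\mu^{2} > 0$ the right-hand side tends to $-\infty$ as $\la \to \infty$ and in particular becomes negative once $\la \ge \Lmb(\ba\mu^{2},k)$, contradicting the assumption $f_{\ba\mu} \ge 0$. The main (mild) obstacle is keeping the bookkeeping straight: the $\ba\mu$-dependent threshold $\Lmb(\ba\mu^{2},k)$ blows up as $\ba\mu \to 0$, which is precisely the assertion \eqref{mu to 0}. Combining this with the simplicity statement proved above, the unique gap eigenvalue $\mu_{\la,k}^{2}$ of $H_{V_{\la,k}}$ cannot remain bounded below any fixed $\ba\mu^{2} > 0$ as $\la \to \infty$, completing the proof of migration to zero. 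The Yang--Mills analogue in Theorem~\ref{YM e val} is handled in exactly the same way with $k=2$, using $\eta_{0}^{\la}$ from \eqref{edef} in place of $\zt_{0}^{\la}$.
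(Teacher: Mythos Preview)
Your proposal is correct and follows the paper's own approach exactly: the paper simply states that ``the proof of Claim~\ref{c:mig} then follows from the exact same argument used to prove that $f(\rho)$ as in~\eqref{form} changes signs for $\la$ large enough,'' noting only that the factor $4\ba\mu^{2}$ appears in front of the integrals, and you have correctly filled in the bookkeeping that this entails.
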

  
  \begin{proof}[Proof of Claim~\ref{c:mig}] Defining the renormalization 
  \ant{
  f_{\ba \mu}( \rho)  =  \phi_0( \rho)/ \z (\rho)
  }
  we have, as in~\eqref{form} that 
   \ant{
  &f_{\ba  \mu}( \rho) = f_{\ba  \mu}( \rho_1) + f'_{\ba  \mu}( \rho_1) \z^2( \rho_1) \int_{\rho_1}^{\rho} \z^{-2}( \tau) \, d \tau - \frac{4 \ba\mu^2}{\la^2} \int_{\rho_1}^{\rho}  \int_{\rho_1}^{\tau} \frac{  \z^2( \s)}{ \z^{2}( \tau)}  f_{\ba  \mu}( \s) \, d \s \, d \tau \\
  &f'_{\ba \mu}( \rho)= f'_{\ba  \mu}( \rho_1) \z^2( \rho_1) \z^{-2}( \rho) -  \frac{4 \ba\mu^2}{\la^2}\int_{\rho_1}^{\rho}  \frac{  \z^2( \s)}{ \z^{2}( \rho)}   f_{\ba  \mu}( \s) \, d \s 
  }
  for any $\rho_1$ fixed. The proof of Claim~\ref{c:mig} then follows from the exact same argument used to prove that $f( \rho)$ as in~\eqref{form} changes signs for $\la$ large enough -- note that the only difference between the above and~\eqref{form} is the factor of $4 \ba \mu^2$ in front of the integrals. 
  \end{proof}
  
 This completes the proof of Theorem~\ref{e val}$(i)$.
 \end{proof}

We turn next to the proof of Theorem \ref{e val}$(ii)$. The proof still relies on the renormalization technique introduced above, but instead of considering the formal limit of $\LL_{\la,k}$ as $\la\to\infty$ we need to consider the limit $k\to\infty$. As in the case of large $\la,$ the necessary information for the proof of existence of gap eigenvalues is encoded in the solution $$\zeta_0^\la(r)=\frac{2k\la^{k-1}\tanh^k(r/2)\sinh^{1/2}r}{1+\la^{2k}\tanh^{2k}(r/2)}.$$
As usual we let $\phi$ be the solution of $\LL_{\la,k}\phi=\frac{1}{4}\phi$ with the same asymptotic behavior as $\zeta_0^\la$ near $r=0$ and introduce $$f(r)=\frac{\phi(r)}{\zeta_0^\la(r)},\qquad f(0)=1,~f^\prime(0)=0.$$ For simplicity of notation we will henceforth write $\zeta$ instead of $\zeta_0^\la.$ The integral equation satisfied by $f$ is (here $r_0 \ge0$ is an arbitrary constant)
\ant{
&f(r)=f(r_0)+\zeta^2(r_0)f^\prime(r_0)\int_{r_0}^r\frac{dt}{\zeta^2(t)}-\frac{1}{4}\int_{r_0}^r\int_{r_0}^t\frac{\zeta^2(s)}{\zeta^2(t)}f(s)dsdt,\\
&f^\prime(r)=\frac{\zeta^2(r_0)f^\prime(r_0)}{\zeta^2(r)}-\frac{1}{4\zeta^2(r)}\int_{r_0}^r\zeta^2(t)f(t)dt.
}
In order to analyze this integral uniformly in $k$ as $k\to\infty,$ we again introduce  the \emph{renormalized variable} $$\rrho=\lambda^k\tanh^k(r/2)=\Theta\tanh^k(r/2).$$ Recall from the proof of Theorem~\ref{S2 no eval}$(ii)$ that $\rrho\leq\Theta$ and
$$dr=\left[\left(\frac{k}{2}\right)\left(\left(\frac{\Theta}{\rrho}\right)^{\frac{1}{k}}-\left(\frac{\Theta}{\rrho}\right)^{-\frac{1}{k}}\right)\right]^{-1}\frac{d\rrho}{\rrho}$$
Abusing notation by writing $\zeta(\rrho)$ instead of $\zeta(r)$ we have
\ant{
\frac{\zeta^{2}(\ssigma)}{\zeta^{2}(\ttau)}=\frac{\left(\frac{k}{2}\right)\left(\left(\frac{\Theta}{\ttau}\right)^{\frac{1}{k}}-\left(\frac{\Theta}{\ttau}\right)^{-\frac{1}{k}}\right)}{\left(\frac{k}{2}\right)\left(\left(\frac{\Theta}{\ssigma}\right)^{\frac{1}{k}}-\left(\frac{\Theta}{\ssigma}\right)^{-\frac{1}{k}}\right)}\cdot\frac{\sigma^2(1+\tau^2)^2}{\tau^2(1+\sigma^2)^2},
}
and the integral equations for $f$ can be written as (where now $\prime:=\frac{d}{d\rrho}$)
\ali{\label{large k int eq}
&f(\rrho)=f(\rrho_0)+\rrho_0f^\prime(\rrho_0)\frac{\rrho_0^2}{(1+\rrho_0^2)^2}\int_{\rrho_0}^\rrho\frac{(1+\ttau^2)^2}{\ttau^2}\frac{d\ttau}{\ttau}\\
&\qquad\quad-\frac{1}{4}\int_{\rrho_0}^\rrho\int_{\rrho_0}^\ttau\frac{\ssigma^2(1+\ttau^2)^2}{\ttau^2(1+\ssigma^2)^2}\left[\left(\frac{k}{2}\right)\left(\left(\frac{\Theta}{\ssigma}\right)^{\frac{1}{k}}-\left(\frac{\Theta}{\ssigma}\right)^{-\frac{1}{k}}\right)\right]^{-2}f(\sigma)\frac{d\ssigma}{\ssigma}\frac{d\ttau}{\ttau},\\
&f^{\prime}(\rrho)=f^\prime(\rrho_0)-\frac{1}{4}\frac{(1+\rrho^2)^2}{\rrho^3}\int_{\rrho_0}^\rrho\frac{\ssigma^2}{(1+\ssigma^2)^2}\left[\left(\frac{k}{2}\right)\left(\left(\frac{\Theta}{\ssigma}\right)^{\frac{1}{k}}-\left(\frac{\Theta}{\ssigma}\right)^{-\frac{1}{k}}\right)\right]^{-1}\frac{d\ssigma}{\ssigma}.
}

\begin{proof}[Proof of Theorem \ref{e val}$(ii)$]
Assume for contradiction that $f(\rrho$) is everywhere non-negative. It follows from the expression \eqref{large k int eq} that $f$ is decreasing and $f(\rrho)\leq1$ for all $\rrho\leq\Theta.$ We begin by providing a lower bound on $f(\rrho)$ for $\rrho\in[0,A]$ where $A$ is to be determined. Using Lemma \ref{k limit} we have
\ant{
f(\rrho)&\geq1-\frac{1}{4}\int_{0}^\rrho\int_{0}^\ttau\frac{\ssigma^2(1+\ttau^2)^2}{\ttau^2(1+\ssigma^2)^2}\left[\left(\frac{k}{2}\right)\left(\left(\frac{\Theta}{\ssigma}\right)^{\frac{1}{k}}-\left(\frac{\Theta}{\ssigma}\right)^{-\frac{1}{k}}\right)\right]^{-2}\frac{d\ssigma}{\ssigma}\frac{d\ttau}{\ttau}\\
&\geq1-\frac{1}{4}\int_{0}^\rrho\int_{0}^\ttau\frac{\ssigma^2(1+\ttau^2)^2}{\ttau^2(1+\ssigma^2)^2\log^2\left(\frac{\Theta}{\ssigma}\right)}\frac{d\ssigma}{\ssigma}\frac{d\ttau}{\ttau}\\
&=1-\frac{1}{4}\int_0^\rrho \frac{\ssigma}{(1+\ssigma^2)^2\log^2\left(\frac{\Theta}{\ssigma}\right)}\int_\ssigma^\rrho\frac{(1+\ttau^2)^2}{\ttau^3}d\ttau d\ssigma\\
&\geq1-C(1+A^4)\int_0^\rrho\frac{1}{\ssigma\log^2\left(\frac{\Theta}{\ssigma}\right)}d\ssigma\geq1-\frac{C(1+A^4)}{\log \left(\frac{\Theta}{A}\right)},
} 
where to evaluate the last integral we have used the substitution $\ssigma^\prime=\log\left(\frac{\Theta}{\ssigma}\right).$ By taking $\Theta$ large compared to $A$ we can guarantee that the last quantity on the right hand side of the estimate above is bounded away from zero.

The next step in the proof consists of finding a lower bound on $|f^\prime(A)|.$ Using the expression \eqref{large k int eq}, the previous bound on $f(A),$ Lemma \ref{k limit}, and the dominated convergence theorem, we get
\ant{
f^\prime(A)&=-\frac{1}{4}\frac{(1+A^2)^2}{A^3}\int_0^Af(\ssigma)\frac{\ssigma^2}{(1+\ssigma^2)^2}\left[\left(\frac{k}{2}\right)\left(\left(\frac{\Theta}{\ssigma}\right)^{\frac{1}{k}}-\left(\frac{\Theta}{\ssigma}\right)^{-\frac{1}{k}}\right)\right]^{-2}\frac{d\ssigma}{\ssigma}\\
&\leq-\frac{C(1+A^2)}{A^3}\left(1-\frac{C(1+A^4)}{\log^2\left(\frac{\Theta}{A}\right)}\right)\left((1+A^2)^{-2}\int_0^A\frac{\ssigma}{\log^2\left(\frac{\Theta}{\ssigma}\right)}d\ssigma+o_k(1)\right)\\
&\leq-\frac{C(1+A^2)}{A^3}\left(1-\frac{C(1+A^4)}{\log^2\left(\frac{\Theta}{A}\right)}\right)\left((1+A^2)^{-2}\Theta^2\int_{\frac{A}{2\Theta}}^{\frac{A}{\Theta}}\frac{\ssigma^\prime d\ssigma^\prime}{\log^2\ssigma^\prime}+o_k(1)\right)\\
&\leq-\frac{C(1+A^2)}{A^3}\left(1-\frac{C(1+A^4)}{\log^2\left(\frac{\Theta}{A}\right)}\right)\left(\frac{A^2}{(1+A^2)^2\log^2\left(\frac{\Theta}{A}\right)}+o_k(1)\right).
}
Choosing $A=1$ and $\Theta$ large we conclude that $$f^\prime(1)\leq-C\left(1-\log^{-2}\Theta\right)\left(\log^{-2}\Theta+o_k(1)\right)\leq-C(\log^{-2}\Theta+o_k(1)).$$ Finally, going back to the expression \eqref{large k int eq} we see that
\ant{
f(\Theta/2)&\leq1-C(\log^{-2}\Theta+o_k(1))\int_1^{\Theta/2}\frac{(1+\ttau^2)^2}{\ttau^2}\frac{d\ttau}{\ttau}\\
&\leq1-C\Theta^2(\log^{-2}\Theta+o_k(1)).
}
Choosing $\Theta$ and $k$ large enough we conclude that $f(\Theta/2)$ is negative, which is a contradiction.
\end{proof}

The proof of Theorem \ref{YM e val} is very similar to the proof of Theorem~\ref{e val}$(i)$. We give a brief sketch. 
\begin{proof}[Proof of Theorem \ref{YM e val}]

As in the proof of Theorem~\ref{e val}, to prove the existence of a gap eigenvalue for large $\la$, we show that any solution $\phi_0^\la$ to 
\ant{
\LL_{W_\la} \phi_0^\la =  \frac{1}{4} \phi_0^\la, \quad \phi_0^\la \in L^2([0, c))  \mfor c>0 
}
must change signs. Here we renormalize with respect to $\eta_0^\la$ defined in~\eqref{edef}. Indeed passing to the renormalized coordinate $\rho:=  \la r/2$, we write $\eta( \rho):= \eta_0^{\la}(2 \rho/ \la)$ and $\phi( \rho) =  \phi_0^\la( 2 \rho/ \la)$ and set 
\EQ{
f( \rho):=  \frac{ \phi( \rho)}{ \eta( \rho)}.  
}
and we multiply by a suitable constant so that $(f(0), f'(0)) = (1, 0)$. We recall from~\eqref{edef} that 
\EQ{ \label{eta}
\eta( \rho)=  \frac{4 \la \tanh^2( \rho/ \la) \sinh^{\frac{1}{2}}(2 \rho/ \la)}{(1+ \la^2 \tanh^2( \rho/ \la))^2}
}
Then for any fixed $\rho_1 \ge 0$ we have 
 \ant{
  &f( \rho) = f( \rho_1) + f'( \rho_1) \eta^2( \rho_1) \int_{\rho_1}^{\rho} \eta^{-2}( \tau) \, d \tau - \frac{1}{\la^2} \int_{\rho_1}^{\rho}  \int_{\rho_1}^{\tau} \frac{  \eta^2( \s)}{ \eta^{2}( \tau)}  f( \s) \, d \s \, d \tau \\
  &f'( \rho)= f'( \rho_1) \eta^2( \rho_1) \eta^{-2}( \rho) -  \frac{1}{\la^2}\int_{\rho_1}^{\rho}  \frac{  \eta^2( \s)}{ \eta^{2}( \rho)}   f( \s) \, d \s 
  }
Assuming for contradiction that $f( \rho) \ge 0$ for all $ \rho \ge 0$, it follows that $f'$ is negative and hence $f$ is strictly decreasing. Arguing exactly as in Claim~\ref{c:12}, using the explicit formula for $\eta$,  one can choose $\la$ large enough so that $f( \rho) \ge \frac{1}{2}$ for all $\rho \in [0, \rho_0]$, where $\rho_0:= \rho_0( \la):= \la \arctanh(1/ \la)$. Then, proceeding as in the proof of Claim~\ref{c:f'}, one can use the fact that $f( \rho) \ge \frac{1}{2}$ on $[0, \rho_0]$ along with~\eqref{eta} to show that 
\ant{
 \abs{f'( \rho_0)} &\ge \frac{1}{2 \la^2} \frac{1}{ \eta^2( \rho_0)} \int_0^{\rho_0} \eta^2( \s) \, d \s\\
 & \ge \frac{1}{ 2\eta^2( \rho_0)} \int_0^{\rho_0} \tanh^4( \s/ \la)\sinh( \s/ \la) \cosh(\s/ \la) \, d \s \\
 & \ge c \la^{-5}  \frac{1}{ \eta^2( \rho_0)}
 }
It then follows from the integral equation for $f$ with $\rho_1 = \rho_0$ that 
\EQ{
f( \rho) \le 1 - c \la^{-5} \int_{\rho_0}^{\rho} \eta^{-2}( \tau) \, d \tau
}
Using again the explicit formula~\eqref{eta} we can deduce that 
\ant{
\lim_{\rho \to \infty} f( \rho) \le 1 - c \la^2( 1 -  \la^{-4})
}
which is negative for $\la$ large enough. This is a contradiction. Therefore $f$, and thus also $\phi_0$ must change signs. By Sturm oscillation it follows that there exists an eigenvalue in the spectral gap $(0, 1/4)$. 


For the uniqueness statement, arguing as in the proof of Theorem \ref{e val}, we can reduce matters to proving the analog of~\eqref{new potential bound} in the Yang-Mills setting. In particular,  it suffices to show that the following inequality holds for $r\geq C/\lambda$ for an appropriate constant $C$ and large $\lambda:$
\ant{
4(g_\YM g^\prime_\YM)^\prime(Q_{\YM, \la})-\frac{1}{4}\geq0.
}
Using the definition $g_\YM(v)=v-\frac{v^2}{2}$ this reduces to the estimate
\ant{
4(1-3Q_{\YM, \la} + \frac{3}{2} Q^2_{\YM, \la})- 1/4 \ge 0
}
which holds, say for 
\ant{
Q_{\YM, \la}(r)\geq  5/3. 
}
Using the definition of $Q_{\YM, \la}$, this is equivalent to 
\ant{
\tanh^2(r/2) \ge 5/ \la^2
}
for large $\lambda.$ The rest of the proof of the uniqueness statement follows from an identical argument as in the proof of Theorem~\ref{e val} and we omit the details. 

Finally, to prove that the unique simple eigenvalue $\mu_{\la}$ satisfies $\mu_{\la} \to 0$ as $\la \to \infty$, we argue exactly as in Claim~\ref{c:mig}. This completes the proof. 
\end{proof}

\bibliographystyle{plain}
\bibliography{researchbib}

 \bigskip

\centerline{\scshape Andrew Lawrie, Sung-Jin Oh}
\smallskip
{\footnotesize
 \centerline{Department of Mathematics, The University of California, Berkeley}
\centerline{970 Evans Hall \#3840, Berkeley, CA 94720, U.S.A.}
\centerline{\email{ alawrie@math.berkeley.edu, sjoh@math.berkeley.edu}}
} 

 \medskip

\centerline{\scshape Sohrab Shahshahani}
\medskip
{\footnotesize
 \centerline{Department of Mathematics, The University of Michigan}
\centerline{2074 East Hall, 530 Church Street
Ann Arbor, MI  48109-1043, U.S.A.}
\centerline{\email{shahshah@umich.edu}}
} 

\end{document}